 \theoremstyle{plain}
\newtheorem{thm}{Theorem}[section]
\newtheorem{lemma}[thm]{Lemma}
\newtheorem{prop}[thm]{Proposition}
\newtheorem{cor}[thm]{Corollary}
\theoremstyle{definition}
\newtheorem{example}[thm]{Example}
\newtheorem{defn}[thm]{Definition}
\newtheorem{remark}[thm]{Remark}
\theoremstyle{remark}
\numberwithin{equation}{section}
\definecolor{Gray}{gray}{0.5}
\def\cA{\mathcal{A}}
\def\cC{\mathcal{C}}
\def\cH{\mathcal{H}}
\def\cO{\mathcal{O}}
\def \cP{\mathcal{P}}
\def\cQ{\mathcal{Q}}
\def\CC{\mathbb{C}}
\def\FF{\mathbb{F}}
\def\QQ{\mathbb{Q}}
\def\RR{\mathbb{R}}
\def\TT{\mathbb{T}}
\def\ZZ{\mathbb{Z}}
\def\scA{\mathscr{A}}
\def\scB{\mathscr{B}}
\def\scH{\mathscr{H}}
\def\fh{\mathfrak{h}}
\def\fo{\mathfrak{o}}
\def\fq{\mathfrak{q}}
\def\dim{\mathrm{dim}}
\def\Hom{\mathrm{Hom}}
\def\tr{\mathrm{tr}}
\def\Tr{\mathrm{Tr}}
\def\wt{\mathrm{wt}}
\def\mapright#1{\smash{\mathop
        {\longrightarrow}\limits^{#1}}}
\def\la{\lambda}
\newcommand{\vect}[1]{\boldsymbol{#1}}
\def\norm{\|\hspace{-0.033cm}|}
\begin{document}
%
%
%
%
%
%
%
%
%
\title[Random walks on buildings]
 {A local limit theorem for random walks on the chambers of $\tilde{A}_2$ buildings}
\author{James Parkinson}

\address{
School of Mathematics and Statistics\\
University of Sydney, Australia}

\author{Bruno Schapira}
\address{D\'{e}partment de Math\'{e}matiques d'Orsay\\
Universit\'{e} Paris-Sud, France}

\subjclass{Primary 20E42; Secondary 60G50}

\keywords{Random walks, affine buildings, Hecke algebras, harmonic analysis, Plancherel theorem, $p$-adic Lie groups}

\date{\today}
\dedicatory{On the occasion of the birthdays of Professors Cartwright, Kaimanovich, and Picardello}

\begin{abstract}
In this paper we outline an approach for analysing random walks on the chambers of buildings. The types of walks that we consider are those which are well adapted to the structure of the building: Namely walks with transition probabilities $p(c,d)$ depending only on the Weyl distance $\delta(c,d)$. We carry through the computations for thick locally finite affine buildings of type $\tilde{A}_2$ to prove a local limit theorem for these buildings.
The technique centres around the representation theory of the associated Hecke algebra. This representation theory is particularly well developed for affine Hecke algebras, with elegant harmonic analysis developed by Opdam (\cite{opdamtrace}, \cite{opdamhanalysis}). We give an introductory account of this theory in the second half of this paper.
\end{abstract}

\maketitle

\section*{Introduction}

Probability theory on real Lie groups and symmetric spaces has a long and rich history (see \cite{bougerol}, \cite{guivarc'h}, \cite{guivarc'h2} and \cite{virtser} for example). A landmark work in the theory is Bougerol's 1981 paper \cite{bougerol} where the Plancherel Theorem of Harish-Chandra \cite{chandra} is applied to prove a local limit theorem for real semisimple Lie groups. There has also been considerable work done for Lie groups over local fields, such as $SL_n(\QQ_p)$ (see \cite{cartwrightwoess}, \cite{lindlbauer}, \cite{parkinsonwalks}, \cite{sawyer} and \cite{tolli} for example). In this case the group acts on a beautiful geometric object; the \textit{affine building}, and probability theory on the group can be analysed by studying probability theory on the building. It is this approach that we take here -- we develop a general setup for studying radial random walks on arbitrary buildings, and explicitly carry out the technique for $\tilde{A}_2$ buildings to prove a local limit theorem for random walks on the chambers of these buildings. 

A \textit{building} is a geometric/combinatorial object that can be defined axiomatically (see Definition~\ref{defn:buildings}). It is a set $\cC$ of \textit{chambers} (the rooms of the building) glued together in a highly structured way. The chambers can be visualised as simplices (all of the same dimension) and the gluing occurs along their codimension~$1$~faces, called \textit{panels}. Panels are the `doors' of the chambers -- one moves from chamber $c$ to chamber $d$ via the panel common to $c$ and $d$ (see Figure~\ref{fig:local}). Each panel $\pi$ has a \textit{type} $\mathrm{type}(\pi)$ (in some index set $I$) such that each chamber has exactly one panel of each type. If chambers $c$ and $d$ are glued together along their type $i$ panels then they are said to be $i$-\textit{adjacent}. There is a \textit{relative position function} $\delta(c,d)$ between any two chambers $c$ and $d$. This function takes values in a \textit{Coxeter group}~$W$ associated to the building, and it encodes the types of walks (or \textit{galleries}) in the building: If there is a minimal length walk from $c$ to $d$ passing through panels of types $i_1,\ldots,i_{\ell}$ then $\delta(c,d)=s_{i_1}\cdots s_{i_{\ell}}$ where $s_i$, $i\in I$, are the generators of the Coxeter group~$W$.

We will be considering random walks $A=(p(c,d))_{c,d\in\cC}$ on the set $\cC$ of chambers of a building. Here $p(c,d)$ is the probability that the walker moves from $c$ to $d$ in one step. A \textit{local limit theorem} is an asymptotic estimate for the \textit{$n$-step transition probability} $p^{(n)}(c,d)$ as $n\to\infty$, with $c$ and $d$ fixed.

Let us give a rough summary of the results and techniques of this paper. Let $(\cC,\delta)$ be a building with Coxeter group $W$. We will assume that $(\cC,\delta)$ satisfies a mild regularity condition (Definition~\ref{defn:regular}). Under this assumption the cardinality of the $w$-sphere $|\{d'\in\cC\mid\delta(c,d')=w\}|=q_w$ is independent of the centre $c\in\cC$ (for each $w\in W$). A random walk $A=(p(c,d))_{c,d\in\cC}$ is \textit{radial} if $p(c,d)=p(c',d')$ whenever $\delta(c,d)=\delta(c',d')$. It is elementary that a random walk $A=(p(c,d))_{c,d\in\cC}$ is radial if and only if
$$
A=\sum_{w\in W}a_wA_w\quad\textrm{where}\quad a_w\geq0\quad\textrm{and}\quad \sum_{w\in W}a_w=1,
$$
where $A_w=(p_w(c,d))_{c,d\in\cC}$ is the transition matrix for the random walk with transition probabilities
$$
p_w(c,d)=\begin{cases}\frac{1}{q_w}&\textrm{if $\delta(c,d)=w$}\\
0&\textrm{otherwise},
\end{cases}
$$
This naturally leads us to consider the linear span $\scA$ over $\CC$ of the operators $A_w$, $w\in W$. It is well known that $\scA$ is an algebra under convolution (see Proposition~\ref{prop:relations}). This algebra is the \textit{Hecke algebra} of the building; it is a noncommutative associative unital algebra. 

It is not difficult to see that if $A=(p(c,d))_{c,d\in\cC}$ is a radial random walk then
\begin{align}\label{eq:100}
p^{(n)}(c,d)=q_w^{-2}\Tr(A^nA_{w^{-1}})\qquad\textrm{if $\delta(c,d)=w$},
\end{align}
where $\Tr:\scA\to\CC$ is the \textit{canonical trace functional} given by linearly extending $\Tr(A_w)=\delta_{w,1}$. One can complete $\scA$ into a $C^*$-algebra~$\overline{\scA}$. Then $\Tr$ extends to a \textit{trace} on~$\overline{\scA}$. Under certain conditions on the representation theory of $\overline{\scA}$ (for example, liminality) there is general machinery on the decomposition of a trace that guarantees the existence of a unique Borel probability measure $\mu$ (the \textit{Plancherel measure}) such that (see \cite[\S8.8]{dixmier})
\begin{align}\label{eq:key}
\Tr(A)=\int_{\mathrm{spec}(\overline{\scA})}\chi_{\pi}(A)\,d\mu(\pi)\qquad\textrm{for all $A\in\overline{\scA}$}
\end{align}
where $\mathrm{spec}(\overline{\scA})$ is the \textit{spectrum} of $\overline{\scA}$, and $\chi_{\pi}$ is the character of the representation $\pi\in\mathrm{spec}(\overline{\scA})$ (we will be working in the situation where the irreducible representations are finite dimensional, and so $\chi_{\pi}(A)=\tr(\pi(A))$ where $\tr$ is the usual matrix trace). The usefulness of (\ref{eq:key}) for random walk theory is clear: If $A=(p(c,d))_{c,d\in\cC}$ is a radial random walk, then by (\ref{eq:100}) we have
\begin{align}\label{eq:start}
p^{(n)}(c,d)=q_w^{-2}\int_{\mathrm{spec}(\overline{\scA})}\chi_{\pi}(A^nA_{w^{-1}})\,d\mu(\pi)\qquad\textrm{if}\quad \delta(c,d)=w.
\end{align}
Therefore if we have a good understanding of $\mu$ and the representations $\pi$ in $\mathrm{spec}(\overline{\scA})$ then it should be possible to extract the leading behaviour of the integral as $n\to\infty$, thereby proving a local limit theorem. This is delicate work: The representation theory of Hecke algebras is a beautiful and rich subject with many subtleties. The representation theory is only really well developed in the cases where $W$ is finite or \textit{affine}. It is for this reason that in the end we will restrict ourselves to the affine case -- here we have at our disposal the elegant harmonic analysis of Opdam \cite{opdamhanalysis}. In fact we will restrict our specific computations to the $\tilde{A}_2$ case. The general affine case will appear elsewhere, where we also provide central limit theorems and rate of escape theorems. (See \cite{ramdiaconis} for an analysis of the finite case).

------

This paper is divided into Parts I and II, which can be more or less read independently. The local limit theorem appears in Part~I, and the derivation of the Plancherel formula is given in Part~II. Part~I also includes relevant background on Coxeter groups, buildings and the Hecke algebra of a building. Part~II contains relevant structure theory and representation theory of affine Hecke algebras, and an account of the harmonic analysis on affine Hecke algebras. The structural and representation theoretic results are well known, and the harmonic analysis results are from \cite{opdamtrace} and \cite{opdamhanalysis}, with some minor modifications. We make no claim of originality in Part~II, however we believe that this part is a nice contribution to the literature because it gives an introduction to the quite profound general analysis undertaken by Opdam (\cite{opdamtrace} and \cite{opdamhanalysis}).

------

Let us conclude this introduction by mentioning some related work. Brown and Diaconis \cite{diaconisbrown} and Billera, Brown and Diaconis \cite{billera} have studied random walks on hyperplane arrangements. This elegant theory is `just around the corner' from random walks on spherical (finite) buildings. Diaconis and Ram apply the representation theory of finite dimensional Hecke algebras to prove mixing time theorems for random walks on spherical buildings (see also \cite{diaconis}). In the context of affine buildings initial results came from the theory of homogeneous and semi-homogeneous trees (these are the $\tilde{A}_1$ buildings, arising from groups like $SL_2(\QQ_p)$). See \cite{sawyer}. The next simplest (irreducible) affine buildings are the $\tilde{A}_2$ buildings. Random walks on the vertices of these buildings are studied in \cite{lindlbauer} by Lindlbauer and Voit. Cartwright and Woess \cite{cartwrightwoess} study walks on the vertices of $\tilde{A}_d$ buildings and Parkinson \cite{parkinsonwalks} generalised this to walks on the vertices of arbitrary (regular) affine buildings. This work applies harmonic analysis from Macdonald \cite{macsph} and Matsumoto~\cite{matsumoto}. We note that the analysis on the vertices of an affine building is somewhat simpler than the chamber case, because the underlying Hecke algebra in the vertex case is commutative. Finally, Tolli \cite{tolli} has proved a local limit theorem for $SL_d(\QQ_p)$, which gives results for random walks on the associated building. 


\section*{Part I: The local limit theorem}

\section{Buildings and random walks}\label{sect:1}

Morally a \textit{building} is a way of organising the flag variety $G/B$ of a Lie group or Kac-Moody group into a geometric object that reflects the combinatorics of the \textit{Bruhat decomposition} and the internal structure of the double cosets~$BgB$. Remarkably buildings can be defined axiomatically, without any reference to the underlying connections with Lie groups and Kac-Moody groups. In this section we recall one of the axiomatic definitions of buildings. We define radial random walks on buildings, and write down the Hecke algebra of the building. Standard references for this section include \cite{bourbaki}, \cite{humphreys}, \cite{brown}, \cite{ronan} and \cite{woess}.

\subsection{Coxeter groups}

The notion of a \textit{Coxeter group} is at the heart of building theory.

\begin{defn}
A \textit{Coxeter system} is a pair $(W,S)$ where $W$ is a group generated by a finite set $S=\{s_0,\ldots,s_n\}$ subject to relations
$$
(s_is_j)^{m_{ij}}=1\qquad\textrm{for all $i,j=0,1,\ldots,n$},
$$
where (i) $m_{ii}=1$ for all $i$, (ii) $m_{ij}=m_{ji}$ for all $i,j$, and (iii) $m_{ij}\geq2$ is an integer or $\infty$ for $i\neq j$. We usually simply call $W$ a \textit{Coxeter group}.
\end{defn}

Coxeter groups are ``abstract reflection groups''. Indeed one can build a vector space on which $W$ acts by reflections (the \textit{reflection representation}). The relations $s_i^2=1$ say that $W$ is generated by reflections, and the relations $(s_is_j)^{m_{ij}}=1$ say that the product of the reflections $s_i$ and $s_j$ is a rotation by $2\pi/m_{ij}$.

\begin{defn} The \textit{length} $\ell(w)$ of $w\in W$ is
$$
\ell(w)=\min\{\ell\geq0\mid \textrm{$w$ can be written as a product of $\ell$ generators in $S$}\}.
$$
If $\ell(w)=\ell$ then an expression $w=s_{i_1}\cdots s_{i_{\ell}}$ is a \textit{reduced expression} for~$w$. It is easy to see that if $s_i\in S$ and $w\in W$ then $\ell(ws_i)=\ell(w)\pm 1$.
\end{defn}

\begin{example}\label{ex:A_2}
Consider the Coxeter system $(W,S)$ with $S=\{s_0,s_1,s_2\}$ and 
$$
s_0^2=s_1^2=s_2^2=(s_0s_1)^3=(s_1s_2)^3=(s_0s_2)^3=1.
$$
This is the Coxeter group of \textit{type} $\tilde{A}_2$, and it is the main example that we will consider in this work. This group can be realised nicely as a reflection group in $\mathbb{R}^2$.
 \begin{figure}[h]
 \begin{center}
       \includegraphics[totalheight=7.8cm]{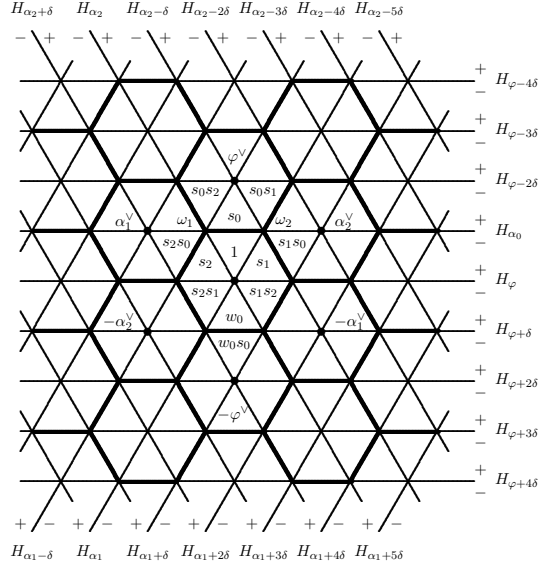}
 \end{center}
 \caption{The $\tilde{A}_2$ Coxeter group}\label{fig:A_2}
 \end{figure}
The elements $s_0,s_1,s_2$ are the reflections in the hyperplanes labeled by $H_{\alpha_0}$, $H_{\alpha_1}$ and $H_{\alpha_2}$. Then $W$ acts simply transitively on the set of triangles. In the building language these triangles are called \textit{chambers}, and in some other aspects of Lie theory they are called \textit{alcoves}. The remainder of the details are explained later. 
\end{example}

\subsection{Buildings}

We adopt the following modern definition of a building, from \cite{brown}. 

\begin{defn}\label{defn:buildings}
A \textit{building of type $(W,S)$} is a pair $(\cC,\delta)$ consisting of a nonempty set $\cC$ of \textit{chambers}, together with a map $\delta:\cC\times\cC\to W$ such that for all $a,b,c\in\cC$:
\begin{enumerate}
\item[\textbf{(B1)}] $\delta(a,b)=1$ if and only if $a=b$.
\item[\textbf{(B2)}] If $\delta(a,b)=w$ and $\delta(b,c)=s_i$, then $\delta(a,c)\in\{w,ws_i\}$. If $\ell(ws_i)=\ell(w)+1$ then $\delta(a,c)=ws_i$.
\item[\textbf{(B3)}] If $\delta(a,b)=w$ then for each $s_i$ there is a chamber $c'\in\cC$ with $\delta(b,c')=s_i$ such that $\delta(a,c')=ws_i$. This chamber is unique if $\ell(ws_i)=\ell(w)-1$.
\end{enumerate}
The function $\delta:\cC\times\cC\to W$ is the \textit{Weyl distance function}. It follows from the axioms that $\delta(a,b)=\delta(b,a)^{-1}$.
\end{defn}

One can visualise the building geometrically as follows. For simplicity, let us suppose that $S=\{s_0,s_1,s_2\}$ (like in the $\tilde{A}_2$ example). Then each chamber of the building is imagined as a triangle, with the sides (codimension~1 faces) being called \textit{panels}. Each panel $\pi$ is assigned a \textit{type} $\textrm{type}(\pi)\in\{0,1,2\}$ such that every chamber has exactly one panel of each type. If chambers $c,d\in\cC$ have $\delta(c,d)=s_i$ then we glue the chambers $c$ and $d$ together along the type $i$ panels. Therefore the local picture of the building looks like Figure~\ref{fig:local}.
\begin{figure}[h]
 \begin{center}
        \includegraphics[totalheight=3.5cm]{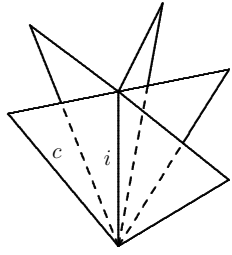}
 \end{center}\vspace{-0.4cm}
 \caption{The local view of a rank~$3$ building}\label{fig:local}
\end{figure}

One calls chambers $c$ and $d$ \textit{$i$-adjacent} if $\delta(c,d)=s_i$ or $c=d$. This is an equivalence relation, and we write $c\sim_i d$ if $c$ and $d$ are $i$-adjacent. Figure~\ref{fig:local} shows the set of all chambers $i$-adjacent to~$c$. A \textit{gallery of type $i_1\cdots i_{\ell}$ from~$c$ to~$d$} is a sequence $(c_0,c_1,\ldots,c_{\ell})$ of chambers with
$$
c=c_0\sim_{i_1} c_1\sim_{i_2}\cdots\sim_{i_{\ell}}c_{\ell}=d,\qquad\textrm{with \quad$c_{k-1}\neq c_k$ \quad for $k=1,\ldots,\ell$}.
$$
So a gallery is a ``walk'' from chamber to chamber through the building. One can show that if $w=s_{i_1}\cdots s_{i_{\ell}}$ is a reduced expression then
$$
\delta(c,d)=w\quad\Longleftrightarrow\quad \textrm{there is a minimal length gallery of type $i_1\cdots i_{\ell}$ from $c$ to $d$}.
$$
So the Weyl distance encodes the types of the minimal length galleries from $c$ to~$d$.

\begin{defn}
Let $w\in W$ and $c\in\cC$. The \textit{$w$-sphere centered at $c$} is
$$
\cC_w(c)=\{d\in\cC\mid\delta(c,d)=w\}.
$$
In particular if $s_i\in S$ then $\cC_{s_i}(c)=\{d\in\cC\mid c\sim_i d\}\backslash\{c\}$.
\end{defn}

Therefore if $w=s_{i_1}\cdots s_{i_{\ell}}$ is a reduced expression then $\cC_w(c)$ is the set of all chambers in the building that are connected to $c$ by a gallery of type $i_1\cdots i_{\ell}$. 

\begin{defn}\label{defn:regular}
A building $(\cC,\delta)$ with Coxeter system $(W,S)$ is: 
\begin{enumerate}
\item[$\bullet$] \textit{thin} if $|\cC_s(c)|=1$ for all $s\in S$ and $c\in\cC$,
\item[$\bullet$] \textit{thick} if $|\cC_s(c)|\geq2$ for all $s\in S$ and $c\in\cC$, 
\item[$\bullet$] \textit{locally finite} if $|\cC_{s}(c)|<\infty$ for all $s\in S$ and $c\in\cC$,
\item[$\bullet$] \textit{regular} if for each $s\in S$,
$|\cC_{s}(c)|=|\cC_{s}(d)|$ for all chambers $c,d\in\cC$.
\end{enumerate}
If $(\cC,\delta)$ is a locally finite regular building then we define $q_0,\ldots,q_n\in\ZZ_{>0}$ by $q_i=|\cC_{s_i}(c)|$ for any $c\in\cC$. The integers $q_0,\ldots,q_n$ are called the \textit{parameters} of the building. For example if Figure~\ref{fig:local} represents part of a locally finite regular building then $q_i=4$ (there are $5=4+1$ chambers on each $i$-panel).
\end{defn}
\begin{center}
\textit{Henceforth we will assume that our buildings are locally finite and regular.}
 \end{center}
  
 \begin{remark} If $(\cC,\delta)$ is thick and locally finite and if $m_{ij}<\infty$ for each $i,j$ then by \cite[Theorem~2.4]{P1} $(\cC,\delta)$ is regular. So regularity is a very weak hypothesis.
\end{remark}
 
 A simple induction shows that if $w=s_{i_1}\cdots s_{i_{\ell}}$ is a reduced expression then
\begin{align}\label{eq:sphere}
|\cC_w(c)|=q_{i_1}\cdots q_{i_{\ell}}\qquad\textrm{for all $c\in\cC$}.
\end{align}
Thus we can define $q_w=q_{i_1}\cdots q_{i_{\ell}}$ (equation (\ref{eq:sphere}) shows that this is independent of the particular reduced expression for~$w$). Since $s_is_js_i\cdots=s_js_is_j\cdots$ ($m_{ij}$ factors on each side) are both reduced expressions it follows that $q_i=q_j$ whenever $m_{ij}$ is finite and odd. Then it follows that if $s_j=ws_iw^{-1}$ for some $w\in W$ then $q_i=q_j$ (see \cite[IV, \S1, No.3, Proposition~3]{bourbaki}).

\begin{remark} Given a Coxeter system $(W,S)$, define a building $(W,\delta_W)$ where $\delta_W(u,v)=u^{-1}v$. Figure~\ref{fig:A_2} shows the $\tilde{A}_2$ case. The building $(W,\delta_W)$ is thin, and all thin buildings arise in this way. A general building of type $(W,S)$ contains many thin sub-buildings of type $(W,S)$. These sub-buildings are called the \textit{apartments} of the building. The apartments fit together in a highly structured way:
\begin{enumerate}
\item[\textbf{(A1)}] Given chambers $c,d\in\cC$ there exists an apartment containing both.
\item[\textbf{(A2)}] If $A$ and $A'$ are apartments with $A\cap A'\neq \emptyset$ then there is an isomorphism $\psi:A\to A'$ fixing each chamber of the intersection $A\cap A'$.
\end{enumerate}
These facts give a \textit{global picture} of a building (see Figure~\ref{fig:A_2building}).
\begin{figure}[h]
\begin{center}
        \includegraphics[totalheight=4cm]{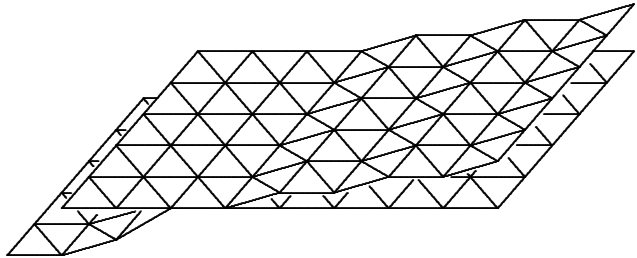}
 \end{center}
 \caption{The global view of an $\tilde{A}_2$ building}\label{fig:A_2building}
 \end{figure} 

\noindent Note that the apartments are as in Figure~\ref{fig:A_2}; there are $6$ apartments shown in Figure~\ref{fig:A_2building}. However if the building is thick then the ``branching'' is actually happening along all of the walls of the building. Therefore a thick $\tilde{A}_2$ building has infinitely many apartments. To understand buildings it is useful to have both the local and global pictures in mind.
\end{remark}

\begin{remark} A locally finite regular $\tilde{A}_2$ building necessarily has $q_0=q_1=q_2=q$ because $m_{0,1}=m_{1,2}=m_{2,0}=3$ are odd. An $\tilde{A}_2$ building is \textit{not} determined by its thickness parameter $q$. For example the buildings constructed from $SL_3(\QQ_p)$ and $SL_3(\FF_p((t)))$ are non-isomorphic and both have thickness $q=p$. Furthermore it is unknown which parameters $q$ can occur as the thickness of an $\tilde{A}_2$ building. By \cite{ronanbuild} this is closely related to the famous unsolved problem of classifying finite projective planes. 
\end{remark}

\begin{remark}
The definition of buildings is driven by the combinatorics of \textit{Kac-Moody groups}, which are infinite dimensional generalisations of semisimple Lie groups. If $G$ is a Kac-Moody group with Borel subgroup $B$ and Weyl group $W$ then the flag variety $G/B$ is a building with $\delta(gB,hB)=w$ if and only if $g^{-1}hB\subseteq BwB$.
\end{remark}

\subsection{Random walks and the Hecke algebra}

A \textit{random walk} consists of a finite or countable \textit{statespace} $X$ and a \textit{transition operator} $A=(p(x,y))_{x,y\in X}$ where $p(x,y)\geq0$ for all $x$ and $y$ and $\sum_{y\in X}p(x,y)=1$ for all $x\in X$. As an operator acting on the space of functions $f:X\to\CC$ we have
$$
(Af)(x)=\sum_{y\in X}p(x,y)f(y)\qquad\textrm{for all $f:X\to\CC$ and $x\in X$}.
$$ 
The numbers $p(x,y)$ are the \textit{transition probabilities} of the walk. The natural interpretation of a random walk is that of a walker taking discrete steps in the space~$X$, with $p(x,y)$ being the probability that the walker, having started at $x$, moves to $y$ in one step. The \textit{$n$-step transition probability} $p^{(n)}(x,y)$ is the probability that the walker, having started at $x$, is at $y$ after $n$ steps. Then $A^n=(p^{(n)}(x,y))_{x,y\in X}$. A \textit{local limit theorem} is a theorem giving an asymptotic estimate for $p^{(n)}(x,y)$ as $n\to\infty$ (with $x,y\in X$ fixed).

Here we consider random walks with statespace $\cC$ (the set of chambers of a building). We consider random walks which are well adapted to the structure of the building:

\begin{defn} A random walk $A=(p(c,d))_{c,d\in\cC}$ on the chambers of a building $(\cC,\delta)$ is \textit{radial} if 
$p(c,d)=p(c',d')$ whenever $\delta(c,d)=\delta(c',d')$.
\end{defn}

Recall that we assume our buildings are locally finite and regular, and so $|\cC_w(c)|=q_w$. For each $w\in W$, the \textit{$w$-averaging operator} is
$$
(A_wf)(c)=\frac{1}{q_{w}}\sum_{d\in\cC_w(c)}f(d)\qquad\textrm{for $f:\cC\to \CC$ and $c\in\cC$}.
$$
Then $A_w=(p_w(c,d))_{c,d\in\cC}$ is the transition operator of the radial walk with 
$$
p_w(c,d)=\begin{cases}q_w^{-1}&\textrm{if $\delta(c,d)=w$}\\
0&\textrm{otherwise.}
\end{cases}$$

The following proposition is elementary.

\begin{prop}\label{prop:radial} A random walk $A=(p(c,d))_{c,d\in\cC}$ is radial if and only if
$$
A=\sum_{w\in W}a_wA_w\qquad\textrm{where\quad $a_w\geq0$\quad and }\quad\sum_{w\in W}a_w=1,
$$
in which case $p(c,d)=a_wq_w^{-1}$ if $\delta(c,d)=w$.
\end{prop}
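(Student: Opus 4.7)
The proposition has two implications, plus a formula tying the coefficients to the transition probabilities. The plan is to verify each direction by a direct, essentially bookkeeping argument.

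For the $(\Leftarrow)$ direction, suppose $A=\sum_{w\in W}a_w A_w$ with $a_w\ge 0$ and $\sum_w a_w=1$. Since $A_w$ has transition probabilities $p_w(c,d)=q_w^{-1}$ when $\delta(c,d)=w$ and $0$ otherwise, I would read off that for any $c,d\in\cC$ with $\delta(c,d)=w$ exactly one term in the sum contributes, giving $p(c,d)=a_w q_w^{-1}$. This depends on $(c,d)$ only through $\delta(c,d)$, so $A$ is radial. Along the way I would verify that $A$ is genuinely stochastic: nonnegativity is immediate, and
\begin{equation*}
\sum_{d\in\cC}p(c,d)=\sum_{w\in W}|\cC_w(c)|\cdot a_w q_w^{-1}=\sum_{w\in W}a_w=1,
\end{equation*}
using the regularity identity $|\cC_w(c)|=q_w$ from \eqref{eq:sphere}.

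For the $(\Rightarrow)$ direction, suppose $A$ is radial. Since $p(c,d)$ depends only on $w=\delta(c,d)$, I would define $a_w=q_w\,p(c,d)$ for any pair with $\delta(c,d)=w$; this is well defined precisely by the radial assumption, and nonnegative. The normalisation $\sum_w a_w=1$ follows by the same computation as above run in reverse: partition $\cC$ by the value of $\delta(c,\cdot)$ and use $|\cC_w(c)|=q_w$ so that
\begin{equation*}
1=\sum_{d\in\cC}p(c,d)=\sum_{w\in W}q_w\cdot a_w q_w^{-1}=\sum_{w\in W}a_w.
\end{equation*}
Finally, by construction $a_w q_w^{-1}=p(c,d)$ whenever $\delta(c,d)=w$, which matches the $(c,d)$-entry of $\sum_{w}a_w A_w$ entry by entry, proving the identity of operators and simultaneously the final formula $p(c,d)=a_w q_w^{-1}$.

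There is no real obstacle here; the proof is essentially a reformulation. The only point that deserves a sentence of care is well-definedness of $a_w$ (which requires $\cC_w(c)\neq\emptyset$ so that $p(c,d)$ is actually sampled at some pair of Weyl distance $w$, or alternatively setting $a_w=0$ when $\cC_w(c)=\emptyset$, which for a locally finite regular building never occurs by axiom \textbf{(B3)} since one can build a chamber at any prescribed Weyl distance from $c$).
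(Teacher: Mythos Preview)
Your proof is correct and is exactly the elementary verification the paper has in mind; indeed the paper states the proposition without proof, calling it ``elementary,'' and your argument is the natural direct computation using the partition $\cC=\bigsqcup_{w\in W}\cC_w(c)$ together with $|\cC_w(c)|=q_w$.
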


Therefore we are naturally lead to consider linear combinations of the (linearly independent) operators $A_w, w\in W$. Let $\scA$ be the vector space over $\CC$ with basis $\{A_w\mid w\in W\}$. The following simple proposition tells us how to compose the averaging operators, and shows that $\scA$ is an algebra.

\begin{prop}\label{prop:relations} Let $w\in W$ and $s_i\in S$. The averaging operators satisfy
$$
A_wA_{s_i}=\begin{cases}A_{ws_i}&\textrm{if $\ell(ws_i)=\ell(w)+1$}\\
q_i^{-1}A_{ws_i}+\left(1-q_i^{-1}\right)A_w&\textrm{if $\ell(ws_i)=\ell(w)-1$}.
\end{cases}
$$
Therefore $\scA$ is an algebra. 
\end{prop}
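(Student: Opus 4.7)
The plan is to expand the product $A_w A_{s_i}$ directly as a matrix (kernel) computation. For any chambers $c,e \in \cC$ and $f:\cC\to\CC$,
\begin{align*}
(A_w A_{s_i} f)(c) = \frac{1}{q_w q_i} \sum_{d \in \cC_w(c)} \sum_{e \in \cC_{s_i}(d)} f(e) = \sum_{e \in \cC} N(c,e)\, f(e),
\end{align*}
where
\begin{align*}
N(c,e) = \frac{1}{q_w q_i}\, \bigl|\{d \in \cC : \delta(c,d) = w,\ \delta(d,e) = s_i\}\bigr|.
\end{align*}
The goal then reduces to computing $N(c,e)$ as a function of $\delta(c,e)$, and matching the result to the claimed combination of $A_w$ and $A_{ws_i}$.

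First I would use axiom \textbf{(B2)} applied to the triple $(c,d,e)$: if $N(c,e) \neq 0$ then $\delta(c,e) \in \{w, ws_i\}$. So at most two values of $\delta(c,e)$ contribute. Next I would establish the key local counting lemma for an $i$-panel $P$ containing some $e$ with $\delta(c,e) = u$: namely, combining \textbf{(B2)} and \textbf{(B3)}, the function $d \mapsto \delta(c,d)$ takes values in $\{u, us_i\}$ on $P$, and
\begin{itemize}
\item if $\ell(us_i) = \ell(u)+1$, then exactly one chamber of $P$ (namely $e$) has distance $u$ and the remaining $q_i$ have distance $us_i$;
\item if $\ell(us_i) = \ell(u)-1$, then exactly one chamber of $P$ has distance $us_i$ and the remaining $q_i$ (including $e$) have distance $u$.
\end{itemize}
The non-trivial direction uses the uniqueness clause of \textbf{(B3)} to pin down the single chamber of shorter distance; the rest follow from \textbf{(B2)}.

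With this lemma, the two cases are immediate. If $\ell(ws_i) = \ell(w)+1$, then \textbf{(B2)} forces $\delta(c,e) = ws_i$ whenever $N(c,e) \neq 0$, and \textbf{(B3)} (applied to $c$ and $e$ with $\delta(c,e) = ws_i$, where $\ell((ws_i)s_i) = \ell(w) < \ell(ws_i)$) gives a unique $d$ with $\delta(c,d) = w$, $\delta(d,e) = s_i$. Hence $N(c,e) = 1/(q_w q_i) = 1/q_{ws_i}$, matching $A_{ws_i}$. If $\ell(ws_i) = \ell(w)-1$, I apply the panel lemma in the panel $P$ containing $e$: for $\delta(c,e) = w$ one gets $q_i - 1$ contributing chambers $d \in P \setminus \{e\}$, so $N(c,e) = (q_i-1)/(q_w q_i) = (1 - q_i^{-1})/q_w$; for $\delta(c,e) = ws_i$ one gets $q_i$ contributing chambers, so $N(c,e) = 1/q_w = q_i^{-1}/q_{ws_i}$. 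These are exactly the kernels of $(1 - q_i^{-1}) A_w + q_i^{-1} A_{ws_i}$.

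The only subtle step is the panel counting lemma; everything else is bookkeeping. Once the relations are proved, the algebra statement follows since $\scA$ is spanned by the $A_w$ and each $A_w = A_{s_{i_1}} \cdots A_{s_{i_\ell}}$ for a reduced expression (by induction using the length-increasing case), so $\scA$ is closed under products.
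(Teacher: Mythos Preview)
Your proof is correct and follows essentially the same approach as the paper: expand $(A_wA_{s_i}f)(c)$ as a double sum, rewrite it in terms of the cardinality $|\cC_w(c)\cap\cC_{s_i}(e)|$, and compute this count in the four cases using axioms \textbf{(B2)} and \textbf{(B3)}. Your ``panel counting lemma'' is a clean repackaging of exactly the same case analysis the paper carries out directly, and your concluding remark that $\scA$ is an algebra via $A_w = A_{s_{i_1}}\cdots A_{s_{i_\ell}}$ is equivalent to the paper's induction on $\ell(v)$.
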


\begin{proof}
Using the definition of the operators we see that
\begin{align*}
(A_wA_{s_i}f)(c)&=\frac{1}{q_wq_i}\sum_{d\in \cC_w(c)}\sum_{e\in\cC_{s_i}(d)}f(e)=\frac{1}{q_wq_i}\sum_{e\in\cC}|\cC_w(c)\cap\cC_{s_i}(e)|f(e).
\end{align*}
If $\cC_w(c)\cap\cC_{s_i}(e)\neq\emptyset$ then (\textbf{B2}) implies that $e\in\cC_w(c)$ or $e\in\cC_{ws_i}(c)$. Then:
\begin{align*}
&\textrm{If $e\in\cC_w(c)$,}&&\quad|\cC_w(c)\cap \cC_{s_i}(e)|=\begin{cases}0&\textrm{if $\ell(ws_i)=\ell(w)+1$ (by (\textbf{B2}))}\\
q_i-1&\textrm{if $\ell(ws_i)=\ell(w)-1$ (by (\textbf{B3}))}
\end{cases}\\
&\textrm{If $e\in\cC_{ws_i}(c)$,}&&\quad|\cC_w(c)\cap \cC_{s_i}(e)|=\begin{cases}
1&\textrm{if $\ell(ws_i)=\ell(w)+1$ (by (\textbf{B3}))}\\
q_i&\textrm{if $\ell(ws_i)=\ell(w)-1$ (by (\textbf{B2}))}.
\end{cases}
\end{align*}
Therefore if $\ell(ws_i)=\ell(w)-1$ we have
\begin{align*}
(A_wA_{s_i}f)(c)=\frac{q_{ws_i}}{q_w}(A_{ws_i}f)(c)+(1-q_i^{-1})(A_wf)(c).
\end{align*}
Since $\ell(ws_i)=\ell(w)-1$ we have $q_w=q_{(ws_i)s_i}=q_{ws_i}q_i$. This completes the proof when $\ell(ws_i)=\ell(w)-1$, and the case $\ell(ws_i)=\ell(w)+1$ is similar. Now a simple induction on $\ell(v)$ shows that $A_uA_v$ is a linear combination of terms $A_w$, $w\in W$. Therefore $\scA$ is an algebra.
\end{proof}

\begin{defn} The algebra $\scA$ is the \textit{Hecke algebra} of the building $(\cC,\delta)$. \end{defn}

If a radial walk $A=(p(c,d))_{c,d\in\cC}$ is written as $A=\sum a_wA_w$ as in Proposition~\ref{prop:radial} then the $n$-step transition probabilities $p^{(n)}(c,d)$ can be found from the following calculation:
\begin{align}\label{eq:radial}
p^{(n)}(c,d)=a_w^{(n)}q_w^{-1},\quad\textrm{where}\quad
A^n=\bigg(\sum_{w\in W}a_wA_w\bigg)^n=\sum_{w\in W}a_w^{(n)}A_w.
\end{align}
So finding $p^{(n)}(c,d)$ is the equivalent to finding the coefficient $a_w^{(n)}$ of $A_w$ in $A^n$.

\section{The Plancherel Theorem}\label{sect:2}

 In this section we discuss how the representation theory of the Hecke algebra can be used to achieve the goal of computing $p^{(n)}(c,d)$. The representation theory of Hecke algebras is particularly well developed in two important cases: When the underlying Coxeter group is a \textit{finite Weyl group} or an \textit{affine Weyl group}. In the case of a finite Weyl group the building is a finite object, and the types of questions one asks are quite different to what we do here (see~\cite{ramdiaconis}). Therefore we will focus on the affine case here (but our initial setup will remain rather general).

\subsection{The Hecke algebra as a $C^*$-algebra}\label{sect:hanalysisA}

 Let $\ell^2(\cC)$ be the space of square summable functions $f:\cC\to\CC$, with inner product $\langle f,g\rangle=\sum f(c)\overline{g(c)}$. Each $A\in\scA$ maps $\ell^2(\cC)$ into itself (c.f. \cite[Lemma 4.1]{cartsph}):

\begin{lemma} Let $w\in W$. If $f\in\ell^2(\cC)$ then $A_wf\in\ell^2(\cC)$ and $\|A_w\|\leq 1$, where $
\|A\|=\sup\{\|Af\|_2\,:\,f\in\ell^2(\cC),\|f\|_2\leq1\}
$ is the $\ell^2$-operator norm of $A\in\scA$.
\end{lemma}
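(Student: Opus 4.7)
The plan is to exploit that $A_w$ is an honest averaging operator over the finite set $\cC_w(c)$ of cardinality $q_w$, and then use Cauchy--Schwarz (equivalently, convexity of $x\mapsto x^2$) together with a Fubini-style exchange of summation.

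First I would apply Cauchy--Schwarz pointwise. For each chamber $c\in\cC$,
\[
|(A_wf)(c)|^2=\Bigl|\tfrac{1}{q_w}\sum_{d\in\cC_w(c)}f(d)\Bigr|^2\leq\frac{1}{q_w}\sum_{d\in\cC_w(c)}|f(d)|^2,
\]
using that $|\cC_w(c)|=q_w$. Next I would sum over $c\in\cC$ and swap the order of summation:
\[
\|A_wf\|_2^2\leq\frac{1}{q_w}\sum_{c\in\cC}\sum_{d\in\cC_w(c)}|f(d)|^2=\frac{1}{q_w}\sum_{d\in\cC}|f(d)|^2\cdot\bigl|\{c\in\cC:d\in\cC_w(c)\}\bigr|.
\]
The only genuine point to check is that the inner cardinality is independent of $d$ and equals $q_w$. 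But $d\in\cC_w(c)$ means $\delta(c,d)=w$, equivalently $\delta(d,c)=w^{-1}$, so the set in question is $\cC_{w^{-1}}(d)$, which has cardinality $q_{w^{-1}}$. Finally $q_{w^{-1}}=q_w$: if $w=s_{i_1}\cdots s_{i_\ell}$ is reduced, then $w^{-1}=s_{i_\ell}\cdots s_{i_1}$ is also reduced and the product $q_{i_1}\cdots q_{i_\ell}$ is the same either way.

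Combining these steps gives $\|A_wf\|_2^2\leq\|f\|_2^2$, which simultaneously shows $A_wf\in\ell^2(\cC)$ and $\|A_w\|\leq 1$. I do not anticipate a real obstacle here; the only subtlety worth spelling out is the symmetry $q_{w^{-1}}=q_w$, which in turn is where the regularity hypothesis ensuring that the sphere sizes depend only on $w$ (equation (\ref{eq:sphere})) is used.
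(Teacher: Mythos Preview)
Your proof is correct. The paper does not actually print a proof in the body of the text (it just cites \cite[Lemma~4.1]{cartsph}), but the source contains a commented-out argument, and that argument takes a slightly different route. There the authors bound the bilinear pairing
\[
\sum_{c\in\cC}|(A_wf)(c)\,g(c)|\;\leq\;\sum_{c,d\in\cC}q_w^{-1}1_{\cC_w(c)}(d)\,|f(d)|\,|g(c)|\;\leq\;\|f\|_2\|g\|_2,
\]
by splitting the summand as $\bigl(q_w^{-1/2}1_{\cC_{w^{-1}}(d)}(c)|f(d)|\bigr)\bigl(q_w^{-1/2}1_{\cC_w(c)}(d)|g(c)|\bigr)$ and applying Cauchy--Schwarz on $\ell^2(\cC\times\cC)$; they then invoke the converse of H\"older's inequality to get $A_wf\in\ell^2(\cC)$, and finally set $g=A_wf$ to obtain $\|A_wf\|_2^2\leq\|f\|_2\|A_wf\|_2$. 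This is essentially a Schur-test argument phrased through duality. Your version is the more direct one: Jensen (or Cauchy--Schwarz) pointwise, then Fubini. Both proofs rest on exactly the same combinatorial fact, namely $1_{\cC_w(c)}(d)=1_{\cC_{w^{-1}}(d)}(c)$ together with $q_{w^{-1}}=q_w$, which you correctly isolate and justify. Your route avoids the detour through the bilinear form and the converse H\"older step, at no cost in generality for the statement at hand.
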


Therefore $\scA$ is a subalgebra of the $C^*$-algebra $\scB(\ell^2(\cC))$ of bounded linear operators on $\ell^2(\cC)$, and since $A_w^*=A_{w^{-1}}$ we see that $\scA$ is closed under the adjoint involution.  Let $\overline{\scA}$ denote the completion of $\scA$ with respect to the $\ell^2$-operator norm. Therefore $\overline{\scA}$ is a (non-commutative) $C^*$-algebra.

Let $o\in\cC$ be a fixed chamber. Since $A_w\delta_o=q_w^{-1}1_{\cC_{w^{-1}}(o)}$ it follows that $\langle A_u\delta_o,A_v\delta_o\rangle=\delta_{u,v}q_u^{-1}$. Let
$$
(A,B):=\langle A\delta_o,B\delta_o\rangle\qquad\textrm{for $A,B\in\overline{\scA}$}.
$$
The value of $(A,B)$ does not depend on the particular fixed chamber $o\in\cC$. Moreover, $(\cdot,\cdot)$ defines an inner product on $\overline{\scA}$. The only thing to check is:

\begin{lemma}\label{lem:small} Let $A\in\overline{\scA}$. If $A\delta_o=0$ then $A=0$.
\end{lemma}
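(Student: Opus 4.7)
The plan is to show that every operator in $\overline{\scA}$ has a ``radial matrix'' in the chamber basis: the entry $\langle A\delta_d,\delta_c\rangle$ depends only on $\delta(c,d)\in W$. Once this is established, the vanishing of $A\delta_o$ forces this radial function to vanish on every value of $\delta(c,o)$, which exhausts $W$, and the conclusion $A=0$ follows.

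First I would verify the radial property on $\scA$. A direct computation from the definition of $A_w$ gives
\[
\langle A_w\delta_d,\delta_c\rangle=\tfrac{1}{q_w}\mathbf{1}_{\{\delta(c,d)=w\}},
\]
so by linearity every $A=\sum_w a_wA_w\in\scA$ satisfies $\langle A\delta_d,\delta_c\rangle=\phi_A(\delta(c,d))$ for some function $\phi_A:W\to\CC$, namely $\phi_A(w)=a_w/q_w$.

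Next I would extend the radial property from $\scA$ to the norm closure $\overline{\scA}$ by a continuity argument. Given $A\in\overline{\scA}$, pick $A_n\in\scA$ with $A_n\to A$ in operator norm. Since $\|\delta_c\|_2=\|\delta_d\|_2=1$, matrix coefficients are $1$-Lipschitz in the operator norm:
\[
|\langle A_n\delta_d,\delta_c\rangle-\langle A\delta_d,\delta_c\rangle|\leq\|A_n-A\|.
\]
Thus $\phi_{A_n}(\delta(c,d))\to\langle A\delta_d,\delta_c\rangle$ pointwise. Each $\phi_{A_n}$ is constant on the level sets of $\delta$, and this property survives the pointwise limit, yielding a well-defined $\phi_A:W\to\CC$ for every $A\in\overline{\scA}$.

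With the radial matrix in hand, the lemma is immediate. If $A\delta_o=0$, then for every chamber $c$ we have $\phi_A(\delta(c,o))=\langle A\delta_o,\delta_c\rangle=0$. A short induction on $\ell(w)$ using axiom (\textbf{B3}) shows that $\cC_{w^{-1}}(o)$ is nonempty for every $w\in W$, so every element of $W$ is realised as some $\delta(c,o)$. Hence $\phi_A\equiv 0$, so $\langle A\delta_d,\delta_c\rangle=0$ for all $c,d\in\cC$, and since $\{\delta_c\}_{c\in\cC}$ is an orthonormal basis of $\ell^2(\cC)$, we conclude $A=0$.

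The one delicate point is the extension in the second step: elements of $\overline{\scA}$ need not be expressible as (even infinite) linear combinations of the $A_w$, so one cannot simply take coefficients. Recasting the radial condition as a pointwise statement about matrix coefficients circumvents this obstruction, since matrix coefficients vary continuously with the operator norm and the condition of being constant on each fibre of $\delta$ is closed under pointwise limits.
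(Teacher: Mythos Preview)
Your proof is correct and is essentially a fully worked-out version of the paper's one-line sketch, which reads in its entirety: ``This is easily checked for $A\in\scA$, and thus is true for $A\in\overline{\scA}$ by density.'' Your explicit passage to the radial matrix-coefficient function $\phi_A$ and its continuity under the operator norm is exactly what is needed to make that density step rigorous.
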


\begin{proof}
This is easily checked for $A\in\scA$, and thus is true for $A\in\overline{\scA}$ by density.
\end{proof}

It is routine to verify the following properties:
\begin{align}\label{eq:innerproduct}
(AB,C)=(B,A^*C)\quad\textrm{and}\quad (A,B)=(B^*,A^*)\quad \textrm{for all $A,B,C\in\overline{\scA}$}.
\end{align}

\subsection{The trace functional}

Let $o\in\cC$ be a fixed chamber. The linear map
$$
\Tr:\overline{\scA}\to\CC\quad\textrm{with}\quad \Tr(A)=(A\delta_o)(o)=(A,I)
$$
defines a \textit{trace} on $\overline{\scA}$, because by (\ref{eq:innerproduct}) we have
$$
\Tr(AB)=(AB,I)=(B,A^*)=(A,B^*)=(BA,I)=\Tr(BA).
$$
Note that $\Tr(\sum a_wA_w)=a_1$ and $\Tr(A_u^*A_v)=(A_v,A_u)=q_u^{-1}\delta_{u,v}$, and so (\ref{eq:100}) follows from~(\ref{eq:radial}).

There is a general theory centred around decomposing a trace on a liminal $C^*$-algebra into an integral over irreducible $*$-representations of the algebra (the \textit{spectral decomposition}). For an elegant account see \cite[\S8.8]{dixmier}. A $C^*$-algebra $\cA$ is \textit{liminal} if for every irreducible representation $\pi$ of $\cA$ and for each $x\in \cA$ the operator $\pi(x)$ is compact. Suppose that $\overline{\scA}$ is liminal; indeed this is true if $(\cC,\delta)$ is affine because all of the irreducible representations are finite dimensional (see Proposition~\ref{prop:fundamental}). Then by \cite[\S8.8]{dixmier} there exists a unique Borel probability measure $\mu$ (the \textit{Plancherel measure}) such that (\ref{eq:key}) holds. The way we plan to apply (\ref{eq:key}) was explained in the introduction.

\subsection{Statement of the Plancherel Theorem for type $\tilde{A}_2$}\label{sect:statement}

By the \textit{Plancherel Theorem} we mean the computation of the measure $\mu$ and the spectrum $\mathrm{spec}(\overline{\scA})$ in (\ref{eq:key}). Let us state the Plancherel Theorem for Hecke algebras of type $\tilde{A}_2$. Since this is a representation theoretic statement it is first essential to write down some representations of $\scA$. See Section~\ref{sect:8} for the details. Recall that in type $\tilde{A}_2$ we have $q_0=q_1=q_2=q$.

\smallskip

\noindent\textbf{A  6-dimensional representation:} For each $t=(t_1,t_2)$ with $t_1,t_2\in\CC^{\times}$ there is a 6-dimensional representation $\pi_t:\scA\to M_6(\CC)$ given on the generators of~$\scA$ by the matrices
$$
\pi_t(A_0)=\frac{1}{\sqrt{q}}\begin{pmatrix}
\fq&0&0&0&0&t_1t_2\\
0&\fq&0&t_2&0&0\\
0&0&\fq&0&t_1&0\\
0&t_2^{-1}&0&0&0&0\\
0&0&t_1^{-1}&0&0&0\\
t_1^{-1}t_2^{-1}&0&0&0&0&0\end{pmatrix},
$$
$$
\pi_t(A_1)=\frac{1}{\sqrt{q}}\begin{pmatrix}
0&1&0&0&0&0\\
1&\fq&0&0&0&0\\
0&0&0&1&0&0\\
0&0&1&\fq&0&0\\
0&0&0&0&0&1\\
0&0&0&0&1&\fq
\end{pmatrix},\quad\pi_t(A_2)=\frac{1}{\sqrt{q}}\begin{pmatrix}
0&0&1&0&0&0\\
0&0&0&0&1&0\\
1&0&\fq&0&0&0\\
0&0&0&0&0&1\\
0&1&0&0&\fq&0\\
0&0&0&1&0&\fq
\end{pmatrix},
$$
where for type-setting convenience $\fq=q^{\frac{1}{2}}-q^{-\frac{1}{2}}$. This representation is the \textit{principal series representation} of $\scA$ with \textit{central character}~$t=(t_1,t_2)$. It is irreducible if and only if $t_1,t_2\neq q^{\pm1}$, and every irreducible representation of $\scA$ is a composition factor of a principal series representation for some central character~$t$.

\smallskip

\noindent\textbf{A 3-dimensional representation:} For each $u\in\CC^{\times}$ there is a 3-dimensional representation $\pi_u^{(1)}:\scA\to M_3(\CC)$ given on the generators of $\scA$ by the matrices
$$
\pi_u^{(1)}(A_0)=\frac{1}{\sqrt{q}}\begin{pmatrix}
\fq&0&-u\\
0&-q^{-\frac{1}{2}}&0\\
-u^{-1}&0&0\end{pmatrix},
$$
$$
\pi_u^{(1)}(A_1)=\frac{1}{\sqrt{q}}\begin{pmatrix}
-q^{-\frac{1}{2}}&0&0\\
0&0&1\\
0&1&\fq\end{pmatrix},\qquad
\pi_u^{(1)}(A_2)=\frac{1}{\sqrt{q}}\begin{pmatrix}
0&1&0\\
1&\fq&0\\
0&0&-q^{-\frac{1}{2}}
\end{pmatrix}
$$
This representation is an \textit{induced representation}, constructed by lifting a representation of a \textit{parabolic subalgebra} of $\scA$ to the full algebra. 

\smallskip

\noindent\textbf{A $1$-dimensional representation:} There is a 1-dimensional representation of $\scA$ $\pi^{(2)}:\scA\to\CC$ given on the generators of $\scA$ by 
$$
\pi^{(2)}(A_0)=\pi^{(2)}(A_1)=\pi^{(2)}(A_2)=-q^{-1}.
$$

\smallskip

It can be shown that all of the above representations extend to $\overline{\scA}$. The details will be provided elsewhere in a more general setting. We can now state the Plancherel Theorem for type $\tilde{A}_2$. Let $\TT$ be the circle group 
$$
\TT=\{t\in\CC\mid |t|=1\},
$$
and let $dt$ be normalised Haar measure on $\TT$. Let $\chi_t$, $\chi_u^{(1)}$ and $\chi^{(2)}$ be the characters of $\pi_t,\pi_u^{(1)}$ and $\pi^{(2)}$ respectively. For example, $\chi_t(A)=\tr(\pi_t(A))$, where $\tr$ is the usual matrix trace on $M_6(\CC)$.

\begin{thm}\label{thm:plancherel} Let $(\cC,\delta)$ be a thick locally finite regular $\tilde{A}_2$ building. Then
$$
\Tr(A)=\frac{1}{6q^3}\int_{\TT^2}\frac{\chi_{t}(A)}{|c(t)|^2}\,dt_1dt_2+\frac{(q-1)^2}{q^2(q^2-1)}\int_{\TT}\frac{\chi_{u}^{(1)}(A)}{|c_1(u)|^2}\,du+\frac{(q-1)^3}{q^3-1}\chi^{(2)}(A)
$$
for all $A\in\overline{\scA}$, where
\begin{align*}
c(t)=\frac{(1-q^{-1}t_1^{-1})(1-q^{-1}t_2^{-1})(1-q^{-1}t_1^{-1}t_2^{-1})}{(1-t_1^{-1})(1-t_2^{-1})(1-t_1^{-1}t_2^{-1})}\quad \textrm{and}\quad c_1(u)=\frac{1-q^{-\frac{3}{2}}u^{-1}}{1-q^{\frac{1}{2}}u^{-1}}.
\end{align*}
\end{thm}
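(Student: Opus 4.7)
The plan is to derive Theorem~\ref{thm:plancherel} as the $\tilde A_2$ specialization of Opdam's general Plancherel theorem for affine Hecke algebras, the machinery of which is to be developed in Part~II. The starting point is the Bernstein--Lusztig presentation of $\scA$: after a standard rescaling of the basis $\{A_w\}$ to the Iwahori--Hecke basis $\{T_w\}$, the algebra $\scA$ decomposes as a vector space as $\scA_0\otimes \CC[P]$, where $\scA_0$ is the finite Hecke algebra of the underlying finite Weyl group $W_0$ (of type $A_2$, with $|W_0|=6$) and $\CC[P]$ is the group algebra of the coweight lattice $P$. In this presentation, the canonical trace is pinned down by $\Tr(T_w X^\lambda)=\delta_{w,1}\delta_{\lambda,0}$, so computing $\Tr(A)$ reduces to extracting the coefficient of $T_1X^0$ in the Bernstein expansion of $A$.

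The tempered contribution is produced by the Macdonald--Matsumoto spherical theory: on the commutative subalgebra $\CC[P]^{W_0}$ of $W_0$-invariant Laurent polynomials, $\Tr$ is given by integration against the Macdonald measure $|W_0|^{-1}|c(t)|^{-2}\,dt$ on the compact torus $\TT^2$, with $c(t)$ being precisely the $c$-function appearing in the theorem. Extending from $\CC[P]^{W_0}$ to all of $\scA$ via the principal series $\pi_t=\Ind_{\CC[P]}^{\scA}(t)$ and using Frobenius reciprocity, one obtains the preliminary identity
\[
\Tr(A) \;=\; \frac{1}{6q^3}\int_{\TT^2}\frac{\chi_t(A)}{|c(t)|^2}\,dt \;+\; (\text{residual corrections}),
\]
in which the prefactor $1/(6q^3)$ is pinned down by $|W_0|=6$ together with the Poincar\'e-type factor $q^{\ell(w_0)}=q^3$ coming from the longest element of $W_0$.

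The residual corrections are produced by deforming the contour of integration inward from $\TT^2$ into $(\CC^\times)^2$. As the contour moves, it crosses the polar divisors of the meromorphic extension of $|c(t)|^{-2}$, located at the $W_0$-translates of $t_1=q^{-1}$, $t_2=q^{-1}$, and $t_1t_2=q^{-1}$. Each such crossing produces a one-dimensional residue supported on a subtorus, naturally parameterized by a single variable $u\in\TT$. Grouping $W_0$-conjugate residues and normalizing yields the second term of the Plancherel formula, with density $|c_1(u)|^{-2}$ and overall coefficient $(q-1)^2/(q^2(q^2-1))$; identification of the residue as a character requires recognizing the residual representation as the parabolically induced $\pi_u^{(1)}$ of Section~\ref{sect:statement}. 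A second contour shift on the residual subtorus then crosses the remaining pole of $1/c_1(u)$ (at $u=q^{-3/2}$), producing a zero-dimensional residue equal to $(q-1)^3/(q^3-1)$ times the Steinberg-type character $\chi^{(2)}$.

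The main obstacle is the two-step residue calculus: one must carefully track which polar hyperplanes are crossed when deforming $\TT^2$, handle the $W_0$-symmetrization so as not to over- or under-count conjugate residues, and evaluate the residues of $|c(t)|^{-2}$ and $|c_1(u)|^{-2}$ explicitly to pin down the rational constants $(q-1)^2/(q^2(q^2-1))$ and $(q-1)^3/(q^3-1)$. A secondary issue is matching the geometrically produced residues with the algebraically defined representations $\pi_u^{(1)}$ and $\pi^{(2)}$ from Section~\ref{sect:statement}; this can be done by comparing both sides on a spanning set of Bernstein-basis elements, or, more conceptually, by identifying the residual parameter $u$ with the inducing character of the Levi subalgebra of $\scA$ generated by one of the $A_i$'s.
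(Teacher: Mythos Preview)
Your proposal is in the right neighborhood---the paper's proof is indeed a contour-shift/residue computation in the spirit of Opdam---but the logical flow is inverted in a way that leaves a genuine gap at the starting point.

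The paper does \emph{not} begin with an identity on $\TT^2$ and then deform inward. The crucial device (Section~\ref{sect:6}) is Opdam's trace generating function
$$
F_t(h)=\sum_{\mu\in Q}t^{-\mu}\Tr(x^{\mu}h),
$$
which converges only when each $|t^{\alpha_i^{\vee}}|$ is small, i.e.\ on a \emph{small} torus $\TT_r^2$ with $r\ll 1$. One shows $F_t(h)=f_t(h)F_t(1)$ with $d(t)f_t(h)$ a Laurent polynomial in $t$, computes $F_t(1)=(q_{w_0}c(t)c(t^{-1}))^{-1}$, and thereby obtains the \emph{exact} starting formula
$$
\Tr(h)=\int_{\TT_r^2}\frac{f_t(h)}{q_{w_0}\,c(t)c(t^{-1})}\,dt.
$$
The contour is then shifted \emph{outward} from $\TT_r^2$ to $\TT^2$ (Lemma~\ref{lem:plancherel}), and it is in this process that one picks up residues at the zeros of $n(t)$ lying between the two tori. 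Only after arriving at $\TT^2$ does one symmetrize $f_t$ to get $\chi_t$ via Theorem~\ref{thm:characters}, and separately identify the residue terms with $\chi_u^{(1)}$ and $\chi^{(2)}$ (Lemma~\ref{lem:characters2}).

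Your ``preliminary identity'' on $\TT^2$ with unspecified residual corrections, allegedly obtained by Frobenius reciprocity from the Macdonald formula on $\CC[P]^{W_0}$, is not a valid starting point. Theorem~\ref{thm:plancherelcenter} computes $\Tr$ only on elements of the form $p(x)\mathbf{1}_0$; there is no Frobenius-type argument that extends this to an explicit formula for $\Tr$ on all of $\scH_W$ as a $\TT^2$-integral plus a controllable remainder. Without the generating function $F_t$ and its domain of convergence you have no exact identity to deform, and hence no mechanism by which the residues are actually produced. Your subsequent description of the two-step residue calculus and the matching with $\pi_u^{(1)},\pi^{(2)}$ is essentially what the paper does, but it hangs on getting this first step right.
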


\begin{proof} See Section~\ref{sect:7}.
\end{proof}

\section{The local limit theorem}\label{sect:3}

Let $(\cC,\delta)$ be a locally finite thick $\tilde{A}_2$ building. Therefore $(\cC,\delta)$ is necessarily regular, and $q_0=q_1=q_2=q\geq2$. Let $P=\frac{1}{3}(A_0+A_1+A_2)$ be the transition operator for the simple random walk on $(\cC,\delta)$. That is $P=(p(c,d))_{c,d\in\cC}$ with
$$
p(c,d)=\begin{cases}
\frac{1}{3q}&\textrm{if $c\sim d$ and $c\neq d$}\\
0&\textrm{otherwise}.
\end{cases}
$$
This walk is irreducible (because $\{s_0,s_1,s_2\}$ generates $W$) and aperiodic (this follows from $A_i^2=q^{-1}+(1-q^{-1})A_i$). Our techniques will work for general radial random walks (not just the simple random walk), but the additional generality requires a more careful study of the representation theory of affine Hecke algebras to obtain the bounds and estimates required to make the analysis work. We have chosen to deal with this in a later work, where walks on general affine buildings are studied.

If $\theta=(\theta_1,\theta_2)\in\RR^2$ we write $e^{i\theta}=(e^{i\theta_1},e^{i\theta_2})\in\TT^2$. For $\theta\in\RR^2$ and $\varphi\in\RR$ the matrices $\pi_{e^{i\theta}}(P)$, $\pi_{e^{i\varphi}}^{(1)}(P)$ and $\pi^{(2)}(P)$ are given by 
$$
\pi_{e^{i\theta}}(P)=\frac{1}{3\sqrt{q}}\begin{pmatrix}
\fq&1&1&0&0&e^{i(\theta_1+\theta_2)}\\
1&2\fq&0&e^{i\theta_2}&1&0\\
1&0&2\fq&1&e^{i\theta_1}&0\\
0&e^{-i\theta_2}&1&\fq&0&1\\
0&1&e^{-i\theta_1}&0&\fq&1\\
e^{-i(\theta_1+\theta_2)}&0&0&1&1&2\fq
\end{pmatrix},
$$
$$
\pi_{e^{i\varphi}}^{(1)}(P)=\frac{1}{3\sqrt{q}}\begin{pmatrix}q^{\frac{1}{2}}-2q^{-\frac{1}{2}}&1&-e^{i\varphi}\\
1&q^{\frac{1}{2}}-2q^{-\frac{1}{2}}&1\\
-e^{-i\varphi}&1&q^{\frac{1}{2}}-2q^{-\frac{1}{2}}\end{pmatrix}\quad\textrm{and}\quad \pi^{(2)}(P)=-q^{-\frac{3}{2}}
$$
where as before $\fq=q^{\frac{1}{2}}-q^{-\frac{1}{2}}$.

The local limit theorem requires a careful study of the eigenvalues of $\pi_{e^{i\theta}}(P)$ for $(\theta_1,\theta_2)$ close to~$(0,0)$. Let $\la_1(\theta)\geq\cdots\geq\la_6(\theta)$ be the eigenvalues of $\pi_{e^{i\theta}}(P)$. Let $\la_i=\la_i(0)$. Let $\mu_1(\varphi)\geq\mu_2(\varphi)\geq\mu_3(\varphi)$ be the eigenvalues of $\pi_{e^{i\varphi}}^{(1)}(P)$, where $\varphi\in\RR$. All of these eigenvalues are real, because the matrices are Hermitian.

Explicit formulae for the eigenvalues are not feasible, and so we turn to techniques from perturbation theory. Standard references include~\cite{baumgaertel} and~\cite{katot}. For perturbation theory to work nicely one wants to have complete eigenvalue and eigenvector information for $\pi_1(P)$. The eigenvalues of $\pi_1(P)$ are easily computed. In decreasing order of magnitude they are $1>\la_1>\la_2=\la_3>\la_4=\la_5>\la_6>0$ with $\la_1,\la_2,\la_4$ and $\la_6$ given by
$$
\frac{3(q-1)+\sqrt{q^2+34q+1}}{6q},\,\frac{2(q-1)}{3q},\,\frac{q-1}{3q},\,\frac{3(q-1)-\sqrt{q^2+34q+1}}{6q}
$$
respectively. The eigenspaces $\vect{e}(\la)$ are
\begin{align*}
\begin{aligned}
\vect{e}(\la_1)&=\CC(a,1,1,a,a,1),\\
\vect{e}(\la_6)&=\CC(-b,1,1,-b,-b,1),
\end{aligned}\quad
\begin{aligned}
\vect{e}(\la_2)&=\CC(-1,0,0,1,0,0)+\CC(-1,0,0,0,1,0),\\
\vect{e}(\la_4)&=\CC(0,-1,1,0,0,0)+\CC(0,-1,0,0,0,1)
\end{aligned}
\end{align*}
where $a=\frac{\sqrt{q^2+34q+1}-(q-1)}{6\sqrt{q}}$ and $b=\frac{q-1+\sqrt{q^2+34q+1}}{6\sqrt{q}}$. Let $\vect{v}_1(\theta),\ldots,\vect{v}_6(\theta)$ be an orthonormal basis of $\CC^6$ with $\vect{v}_i(\theta)$ a $\la_i(\theta)$-eigenvector.

\begin{remark} Perron-Frobenius guarantees that the largest eigenvalue of $\pi_1(P)$ is simple with a positive eigenvector (note that $\pi_1(P)^2$ has all entries positive).
\end{remark}

The eigenvalues of $\pi_1^{(1)}(P)$ are
$
\frac{1}{3\sqrt{q}}(q^{\frac{1}{2}}+2q^{-\frac{1}{2}}-1)$ and $\frac{1}{3\sqrt{q}}(q^{\frac{1}{2}}-2q^{-\frac{1}{2}}-2)
$
with the first eigenvalue repeated.

\begin{lemma}\label{lem:bound1}
Let $A=\sum a_wA_w\in\scA$ with $a_w\geq0$. Then
\begin{align*}
|\chi_{e^{i\theta}}(A)|\leq \chi_1(A)\qquad\textrm{for all $\theta\in\RR^2$}.
\end{align*}
\end{lemma}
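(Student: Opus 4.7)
The plan is to reduce the inequality to an entrywise bound on the matrices $\pi_{e^{i\theta}}(A_w)$ and propagate this bound through the Hecke algebra by using reduced expressions. Writing $A = \sum_w a_w A_w$ and applying linearity, it suffices to show the stronger claim
\[
|\chi_{e^{i\theta}}(A_w)| \leq \chi_1(A_w) \qquad \text{for every } w \in W,
\]
because then $|\chi_{e^{i\theta}}(A)| \leq \sum_w a_w |\chi_{e^{i\theta}}(A_w)| \leq \sum_w a_w \chi_1(A_w) = \chi_1(A)$.

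The first step is to inspect the matrices $\pi_{e^{i\theta}}(A_{s_i})$ for $i=0,1,2$ displayed in Section~\ref{sect:statement}. The matrices $\pi_{e^{i\theta}}(A_1)$ and $\pi_{e^{i\theta}}(A_2)$ do not depend on $\theta$ at all, and their nonzero entries are $1/\sqrt{q}$ or $\fq/\sqrt{q} = 1 - q^{-1} \geq 0$. Each nonzero entry of $\pi_{e^{i\theta}}(A_0)$ is either the $\theta$-independent diagonal value $\fq/\sqrt{q}$ or a pure phase factor $e^{i(m_1\theta_1+m_2\theta_2)}/\sqrt{q}$ with $m_1,m_2\in\{-1,0,1\}$. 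In every case,
\[
|\pi_{e^{i\theta}}(A_{s_i})_{jk}| \leq \pi_1(A_{s_i})_{jk},
\]
and $\pi_1(A_{s_i})_{jk} \geq 0$ for all $i,j,k$.

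The second step is an induction on $\ell(w)$. Fix a reduced expression $w = s_{i_1}\cdots s_{i_\ell}$. By iterated application of Proposition~\ref{prop:relations} (each partial product having strictly increasing length), we have $A_w = A_{s_{i_1}}\cdots A_{s_{i_\ell}}$, and consequently $\pi_{e^{i\theta}}(A_w) = \pi_{e^{i\theta}}(A_{s_{i_1}})\cdots\pi_{e^{i\theta}}(A_{s_{i_\ell}})$. Writing $w = s_{i_1}w'$ and using the triangle inequality on the matrix product, then the inductive hypothesis on $w'$ together with the entrywise bound and nonnegativity on $A_{s_{i_1}}$, yields
\[
|\pi_{e^{i\theta}}(A_w)_{jk}| \leq \sum_m \pi_1(A_{s_{i_1}})_{jm}\,\pi_1(A_{w'})_{mk} = \pi_1(A_w)_{jk},
\]
with the right-hand side nonnegative because it is a product of entrywise-nonnegative matrices. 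Summing over the diagonal gives $|\chi_{e^{i\theta}}(A_w)| \leq \sum_j \pi_1(A_w)_{jj} = \chi_1(A_w)$, which combined with the opening reduction completes the proof.

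There is no real obstacle here beyond carefully verifying the entrywise bound on the three generating matrices; the induction step is formal once one has both the bound $|\pi_{e^{i\theta}}(A_{s_i})_{jk}| \leq \pi_1(A_{s_i})_{jk}$ and the positivity of $\pi_1(A_{s_i})$. The only conceptual point worth highlighting is that the Hecke relation in Proposition~\ref{prop:relations} has nonnegative coefficients (both $q_i^{-1}$ and $1 - q_i^{-1}$ are in $[0,1]$), so no signs enter anywhere in the argument.
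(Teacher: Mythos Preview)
Your proof is correct but takes a different route from the paper's. The paper invokes Theorem~\ref{thm:combform}, the alcove-walk formula for the matrix entries of the principal series, to deduce that each entry of $\pi_{e^{i\theta}}(A_w)$ (and hence each $\chi_{e^{i\theta}}(A_w)$) is a \emph{nonnegative} linear combination of monomials $e^{i(k\theta_1+\ell\theta_2)}$; the bound then follows by setting $\theta=0$. You instead bypass the alcove-walk machinery entirely: you verify the entrywise bound $|\pi_{e^{i\theta}}(A_{s_i})_{jk}|\leq \pi_1(A_{s_i})_{jk}$ directly from the explicit generator matrices in Section~\ref{sect:statement}, and then propagate it along reduced expressions via $A_w=A_{s_{i_1}}\cdots A_{s_{i_\ell}}$. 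Your argument is more elementary and entirely self-contained within Part~I, whereas the paper's argument is more structural and makes transparent why the approach extends to general affine types (Theorem~\ref{thm:combform} is not specific to $\tilde A_2$). One minor remark: your final paragraph about the nonnegativity of the Hecke-relation coefficients $q_i^{-1}$ and $1-q_i^{-1}$ is not actually used, since you only ever multiply along reduced expressions where $A_wA_{s_i}=A_{ws_i}$.
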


\begin{proof} The proof uses some of the general representation theory from Section~\ref{sect:5}. It follows from Theorem~\ref{thm:combform} that $\chi_{e^{i\theta}}(A_w)$ is a linear combination of terms $\{e^{ik\theta_1}e^{i\ell\theta_2}\mid k,\ell\in\ZZ\}$ with nonnegative coefficients. Therefore $\chi_{e^{i\theta}}(A)$ also has this property, and the result follows.
\end{proof}

In the proof of the following lemma we will use some well known inequalities between the eigenvalues of the sum of Hermitian matrices (see the interesting survey~\cite{fulton}). In particular, if $X$ and $Y$ are arbitrary $d\times d$ Hermitian matrices with eigenvalues $x_1\geq\cdots\geq x_d$ and $y_1\geq\cdots\geq y_d$ and if $z_1\geq\cdots\geq z_d$ are the eigenvalues of $Z=X+Y$ then 
$$
z_1+\cdots+z_r\leq x_1+\cdots+x_r+y_1+\cdots+y_r\quad\textrm{for each $1\leq r\leq d$}.
$$
It follows that
\begin{align}\label{eq:eigenvalueinequalities}
z_r\leq x_1+y_1\qquad\textrm{and}\qquad z_r\geq x_d+y_d\qquad\textrm{for all $1\leq r\leq d$}
\end{align}
(for the second inequality use the trace identity $\tr(Z)=\tr(X)+\tr(Y)$).

\begin{lemma}\label{lem:boundfull} We have the following.
\begin{enumerate}
\item $|\la_i(\theta)|\leq \la_1$ with equality if and only if $i=1$ and $\theta_1,\theta_2\in 2\pi\ZZ$.
\item $|\mu_i(\varphi)|<\la_1$ for all $i=1,2,3$ and all $\varphi\in\RR$.
\item $|\chi^{(2)}(P)|<\la_1$.
\end{enumerate}
\end{lemma}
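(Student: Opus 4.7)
My plan is to prove all three parts via Wielandt-style spectral radius comparisons with nonnegative matrices, with the inequalities in parts (ii) and (iii) reducing to elementary algebra.

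For part (i), the key observation is that the entries of $\pi_{e^{i\theta}}(P)$ have absolute values equal entrywise to the corresponding entries of $\pi_1(P)$, which is nonnegative and (by the remark preceding the lemma, since $\pi_1(P)^2$ has all strictly positive entries) irreducible. The classical spectral radius bound for nonnegative matrices then gives $\rho(\pi_{e^{i\theta}}(P)) \leq \rho(\pi_1(P)) = \lambda_1$, which is precisely $|\lambda_i(\theta)| \leq \lambda_1$. For the equality case I would invoke Wielandt's structural theorem: $|\lambda_j(\theta)| = \lambda_1$ forces $\pi_{e^{i\theta}}(P) = e^{i\phi} D \pi_1(P) D^{-1}$ for some $\phi \in \RR$ and unitary diagonal $D = \mathrm{diag}(d_1, \ldots, d_6)$. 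Hermiticity of both sides restricts $e^{i\phi}$ to $\pm 1$, and strict positivity of the diagonal entries of $\pi_{e^{i\theta}}(P)$ (which are $(q^{1/2} - q^{-1/2})/(3\sqrt{q})$ or twice this) rules out $-1$. With $e^{i\phi} = 1$, matching the off-diagonal entries equal to $1/(3\sqrt{q})$ forces $d_1 = d_2 = \cdots = d_6$, and then the phase entries $e^{i\theta_1}$, $e^{i\theta_2}$, $e^{i(\theta_1+\theta_2)}$ must all equal $1$, yielding $\theta_1, \theta_2 \in 2\pi\ZZ$. Conversely, when $\theta \in 2\pi\ZZ^2$ we have $\pi_{e^{i\theta}}(P) = \pi_1(P)$, and simplicity of $\lambda_1$ (Perron--Frobenius) shows $|\lambda_i| = \lambda_1$ only for $i = 1$.

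For part (ii), apply the same strategy to the $3 \times 3$ matrix $\pi^{(1)}_{e^{i\varphi}}(P)$: let $N$ be the entrywise absolute value matrix. Then $N = \frac{1}{3\sqrt{q}}\bigl((|q^{1/2} - 2q^{-1/2}| - 1)I + J\bigr)$ with $J$ the $3 \times 3$ all-ones matrix, so $\rho(N) = \frac{|q^{1/2} - 2q^{-1/2}| + 2}{3\sqrt{q}}$. The required strict inequality $\rho(N) < \lambda_1$ reduces (for $q \geq 4$) to $4\sqrt{q} - q - 1 < \sqrt{q^2 + 34q + 1}$, which is trivial when the left side is negative and immediate by squaring otherwise; the cases $q \in \{2, 3\}$ are direct numerical checks. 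Part (iii) is quickest: $|\chi^{(2)}(P)| = q^{-3/2}$, and $q^{-3/2} < \lambda_1$ reduces to $6/\sqrt{q} < 3(q-1) + \sqrt{q^2 + 34q + 1}$, which is immediate since the right side exceeds $3 + \sqrt{73} > 11$ while the left side is at most $3\sqrt{2} < 5$ for $q \geq 2$.

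The main obstacle is the equality analysis in part (i). One must carefully combine Hermiticity (to pin the global phase to $\pm 1$) with positivity of the diagonal (to eliminate the minus sign), and then systematically trace through the pattern of real versus phase-valued off-diagonal entries of the $6 \times 6$ matrix to conclude $\theta \in 2\pi\ZZ^2$. Once the Wielandt-type bounds are set up, parts (ii) and (iii) reduce to routine algebraic verifications.
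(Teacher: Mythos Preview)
Your proof is correct and takes a genuinely different route from the paper's. For part~(i) the paper first invokes Lemma~\ref{lem:bound1} (which in turn rests on the positivity of the alcove-walk expansion, Theorem~\ref{thm:combform}) to get $|\lambda_i(\theta)|\leq\lambda_1$, then uses Weyl-type eigenvalue inequalities for the Hermitian perturbation $E(\theta)=\pi_{e^{i\theta}}(P)-\pi_1(P)$ to exclude $\lambda_i(\theta)=-\lambda_1$, and finally settles the equality case by an explicit determinant identity for $\det(\pi_{e^{i\theta}}(P)-\lambda_1 I)$. For part~(ii) the paper again uses Weyl inequalities on a perturbation. You instead go straight to Wielandt's sharpening of Perron--Frobenius: since $|\pi_{e^{i\theta}}(P)_{jk}|=\pi_1(P)_{jk}$ entrywise and $\pi_1(P)$ is primitive, both the spectral-radius bound and the structural equality characterisation come for free, and your Hermiticity/positive-diagonal argument to force $e^{i\phi}=1$ and then $D$ scalar is clean. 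For part~(ii) your entrywise-modulus matrix $N$ gives $\rho(N)<\lambda_1$ by a short algebraic check. Your approach is more elementary and self-contained---it avoids Lemma~\ref{lem:bound1} and the representation-theoretic input behind it, and it sidesteps the determinant computation. The paper's approach, on the other hand, is set up so that Lemma~\ref{lem:bound1} applies to \emph{any} positive combination $A=\sum a_wA_w$, which is useful for the general radial walks alluded to in the text; your Wielandt argument is tailored to this specific $P$ (though it would adapt to any $A$ whose principal-series matrix has $\theta$-independent entrywise moduli). One minor point: in your converse step for part~(i), ``simplicity of $\lambda_1$'' alone does not rule out another eigenvalue of the same modulus; you need primitivity (equivalently $\pi_1(P)^2>0$), which you do have, or alternatively note that your Wielandt argument already forces the extremal eigenvalue to equal $+\lambda_1$, whence simplicity suffices.
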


\begin{proof}
1. If $|\la_i(\theta)|>\la_1$ then $|\chi_{e^{i\theta}}(P^k)|>\chi_1(P^k)$ for sufficiently large $k$, contradicting Lemma~\ref{lem:bound1}. Therefore $|\la_i(\theta)|\leq\la_1$ for all $i=1,\ldots,6$ and all $\theta\in\RR^2$. Suppose that $|\la_i(\theta)|=\la_1$. Writing $\pi_{e^{i\theta}}(P)=\pi_1(P)+E(\theta)$ we see that $E(\theta)$ has eigenvalues $\pm\frac{2}{3\sqrt{q}}|\sin\frac{\theta_1}{2}|,\pm\frac{2}{3\sqrt{q}}|\sin\frac{\theta_2}{2}|$ and $\pm\frac{2}{3\sqrt{q}}|\sin\frac{\theta_1+\theta_2}{2}|$, and so by (\ref{eq:eigenvalueinequalities})
$$
\la_i(\theta)\geq\la_6-\frac{2}{3\sqrt{q}}>-\la_1\qquad\textrm{for all $i=1,\ldots,6$ and all $\theta\in\RR^2$}.
$$
Hence $|\la_i(\theta)|=\la_1$ implies that $\la_i(\theta)=\la_1$. Then $\la_1(\theta)=\cdots=\la_i(\theta)$ and so if $i>1$ then $|\chi_{e^{i\theta}}(P^k)|>\chi_1(P^k)$ for sufficiently large $k$, contradicting Lemma~\ref{lem:bound1}. Therefore if $i>1$ then we have $|\la_i(\theta)|<\la_1$ for all $\theta\in\RR^2$. Finally we need to show that $|\la_1(\theta)|<\la_1$ unless $\theta_1$ and $\theta_2$ are multiples of $2\pi$. For this we observe the (rather remarkable) identity:
\begin{align*}
3\sqrt{q}\det(\pi_{e^{i\theta}}(P)-\la_1I)&=150-48(\cos\theta_1+\cos\theta_2+\cos(\theta_1+\theta_2))\\
&\qquad-2(\cos(\theta_1+2\theta_2)+\cos(2\theta_1+\theta_2)+\cos(\theta_1-\theta_2)),
\end{align*}
from which the result follows.

2. Write $\pi_{e^{i\varphi}}^{(1)}(P)=\pi_{1}^{(1)}(P)+E'(\varphi)$. Then the eigenvalues of $E'(\varphi)$  are $0$ and $\pm\frac{2}{3\sqrt{q}}|\sin\frac{\varphi}{2}|$. Therefore by (\ref{eq:eigenvalueinequalities}) we have $\mu_3(0)-\frac{2}{3\sqrt{q}}\leq\mu_i(\varphi)\leq \mu_1(0)+\frac{2}{3\sqrt{q}}$, and so
$$
\frac{1}{3\sqrt{q}}\big(q^{\frac{1}{2}}-2q^{-\frac{1}{2}}-4\big)\leq\mu_i(\varphi)\leq \frac{1}{3\sqrt{q}}\big(q^{\frac{1}{2}}+2q^{-\frac{1}{2}}+1\big)\quad\textrm{for each $i=1,2,3$.}
$$
It follows that $|\mu_i(\varphi)|<\la_1$ for all $i=1,2,3$ and all $\varphi\in\RR$.

3. This is obvious since $|\chi^{(2)}(P)|=q^{-\frac{3}{2}}$.
\end{proof}

\begin{lemma}\label{lem:c}
For $\theta_1,\theta_2\in\RR$ we have
$$
\frac{1}{|c(e^{i\theta})|^2}=\frac{q^6}{(q-1)^6}\theta_1^2\theta_2^2(\theta_1+\theta_2)^2\big(1+\cO(\|\theta\|^3)\big)
$$
\end{lemma}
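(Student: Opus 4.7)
The plan is a direct Taylor expansion in $\theta$, exploiting the fact that $c$ has real rational coefficients, so that $\overline{c(e^{i\theta})} = c(e^{-i\theta})$ and hence
\begin{align*}
\frac{1}{|c(e^{i\theta})|^2} \;=\; \frac{1}{c(e^{i\theta})}\cdot\frac{1}{c(e^{-i\theta})} \;=\; \prod_{\alpha}\frac{|1-e^{-i\theta_\alpha}|^2}{|1-q^{-1}e^{-i\theta_\alpha}|^2},
\end{align*}
where the product runs over the three ``positive roots'' $\alpha \in \{(1,0),(0,1),(1,1)\}$ with the shorthand $\theta_\alpha = \alpha_1\theta_1 + \alpha_2\theta_2$. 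This reduces the lemma to three independent one-variable expansions.

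First I would handle the numerator factors: using $|1-e^{-i\phi}|^2 = 2-2\cos\phi = 4\sin^2(\phi/2) = \phi^2 + \cO(\phi^4)$, each factor equals $\theta_\alpha^2(1 + \cO(\theta_\alpha^2))$, so the three factors together contribute
\begin{align*}
\theta_1^2\,\theta_2^2\,(\theta_1+\theta_2)^2\bigl(1+\cO(\|\theta\|^2)\bigr).
\end{align*}
Next I would handle the denominator factors: expanding $|1-q^{-1}e^{-i\phi}|^2 = 1 - 2q^{-1}\cos\phi + q^{-2} = (1-q^{-1})^2 + q^{-1}\phi^2 + \cO(\phi^4)$, each factor equals $(1-q^{-1})^2(1+\cO(\phi^2))$, and the product of the three is $(1-q^{-1})^6(1+\cO(\|\theta\|^2))$.

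Taking the quotient and using $(1-q^{-1})^{-6} = q^6/(q-1)^6$ delivers the stated leading term $\frac{q^6}{(q-1)^6}\theta_1^2\theta_2^2(\theta_1+\theta_2)^2$, multiplied by the analytic factor $(1+\cO(\|\theta\|^2))/(1+\cO(\|\theta\|^2))$, which is itself of the form $1 + \cO(\|\theta\|^2)$. There is no genuine obstacle here — the computation is essentially bookkeeping. The one analytic point to verify is that each denominator $|1-q^{-1}e^{-i\theta_\alpha}|^2$ stays bounded away from zero for $\theta$ in a neighbourhood of the origin (which is immediate since $q>1$, giving the uniform lower bound $(1-q^{-1})^2$), so that the reciprocals are smooth in $\theta$ and the Taylor expansion is uniformly controlled on any compact subset.
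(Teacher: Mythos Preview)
Your approach is essentially identical to the paper's: factor $|c(e^{i\theta})|^{-2}$ into the three one-variable factors coming from the positive roots and Taylor-expand each. The only discrepancy is the error exponent: you obtain $1+\cO(\|\theta\|^2)$, whereas the lemma (and the paper's one-line proof) claims $1+\cO(\|\theta\|^3)$. Your exponent is the correct one --- each factor $|1-e^{-i\phi}|^2/|1-q^{-1}e^{-i\phi}|^2$ is an even function of $\phi$ with a genuinely nonzero $\phi^2$-correction, so the sharper bound $\cO(\|\theta\|^3)$ does not hold. This slip in the paper is harmless for the application (after the substitution $\theta=\varphi/\sqrt{n}$ both bounds give $\cO(n^{-1})$ or better), so your argument fully suffices.
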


\begin{proof}
Since $q>1$ we have
$$
\bigg|\frac{1-e^{-ix}}{1-q^{-1}e^{-ix}}\bigg|^2=\frac{q^2x^2}{(q-1)^2}\big(1+\cO(|x|^3)\big)\quad\textrm{for all $x\in\RR$}
$$
and the result follows from the definition of $c(e^{i\theta})$.
\end{proof}

\begin{lemma}\label{lem:combine} Let $w\in W$ and $n\in\ZZ_{\geq0}$. Then
$$
\chi_{e^{i\theta}}(P^nA_{w}^*)=C_w\,\la_1(\theta)^n\left(1+\cO(\|\theta\|)\right)+\fo(\la_1^n)\quad\textrm{where}\quad C_w=\vect{v}_1^T\pi_1(A_w^*)\vect{v}_1,
$$
where $\vect{v}_1=\vect{v}_1(0)$ is a unit eigenvector of $\pi_1(P)$ for $\la_1$.
\end{lemma}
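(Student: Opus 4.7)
The plan is to spectrally decompose $\pi_{e^{i\theta}}(P)^n$ and isolate the contribution of the top eigenvalue. First I would write
\[
\chi_{e^{i\theta}}(P^n A_w^*)=\tr\bigl(\pi_{e^{i\theta}}(P)^n\,\pi_{e^{i\theta}}(A_w^*)\bigr)=\sum_{i=1}^6 \la_i(\theta)^n\,\vect{v}_i(\theta)^*\,\pi_{e^{i\theta}}(A_w^*)\,\vect{v}_i(\theta),
\]
which is valid because $\pi_{e^{i\theta}}(P)$ is Hermitian, so its orthonormal eigenbasis $\vect{v}_1(\theta),\ldots,\vect{v}_6(\theta)$ actually diagonalises the matrix with all eigenvalues real.

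The second step is to extract the $i=1$ summand as the leading contribution. Since $\la_1>\la_2$ (the explicit eigenvalues $\la_1,\ldots,\la_6$ given above are strictly separated near the top) and since the entries of $\pi_{e^{i\theta}}(P)$ are real-analytic (indeed trigonometric polynomial) in $\theta$, standard analytic perturbation theory \cite{katot} produces a neighbourhood of the origin on which $\la_1(\theta)$ is analytic and a choice of unit eigenvector $\vect{v}_1(\theta)$ exists that is also analytic, with $\vect{v}_1(0)=\vect{v}_1$. Because $\theta\mapsto\pi_{e^{i\theta}}(A_w^*)$ is likewise analytic, a first-order Taylor expansion at $\theta=0$ yields
\[
\vect{v}_1(\theta)^*\,\pi_{e^{i\theta}}(A_w^*)\,\vect{v}_1(\theta)=C_w+\cO(\|\theta\|),
\]
so the $i=1$ term contributes $\la_1(\theta)^n\bigl(C_w+\cO(\|\theta\|)\bigr)=C_w\la_1(\theta)^n(1+\cO(\|\theta\|))$, as desired.

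For the remaining $i\geq 2$ terms I would invoke Lemma~\ref{lem:boundfull}(1): at every fixed $\theta$, $|\la_i(\theta)|<\la_1$ strictly, so $|\la_i(\theta)|^n=\fo(\la_1^n)$ as $n\to\infty$. The bilinear forms $\vect{v}_i(\theta)^*\pi_{e^{i\theta}}(A_w^*)\vect{v}_i(\theta)$ are bounded in absolute value by $\|\pi_{e^{i\theta}}(A_w^*)\|$, a finite constant depending only on $w$ and $\theta$, so summing the five contributions $i=2,\ldots,6$ produces a total of order $\fo(\la_1^n)$, completing the estimate.

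The main technical point is the analytic perturbation argument of step two: one must know that the simplicity of $\la_1$ persists in a neighbourhood of $\theta=0$ (guaranteed by the explicit spectral gap at $\theta=0$) and that a coherent analytic choice of unit eigenvector is available there. The rest is routine, requiring only the spectral bound from Lemma~\ref{lem:boundfull} and continuity of the eigendata. Since the estimate is formulated pointwise in $\theta$ with $\cO(\|\theta\|)$ capturing just the local behaviour near the origin, no uniform control over the whole of $\TT^2$ is required at this stage.
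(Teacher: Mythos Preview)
Your proof is correct and follows essentially the same route as the paper: spectrally decompose the Hermitian matrix $\pi_{e^{i\theta}}(P)$, isolate the $i=1$ term via analytic perturbation of the simple top eigenvalue, and absorb the $i\geq 2$ terms into $\fo(\la_1^n)$ using Lemma~\ref{lem:boundfull}. The only cosmetic difference is that the paper invokes the explicit combinatorial formula (Theorem~\ref{thm:combform}) to justify $[\pi_{e^{i\theta}}(A_w^*)]_{ij}=[\pi_1(A_w^*)]_{ij}+\cO(\|\theta\|)$, whereas you appeal directly to analyticity of $\theta\mapsto\pi_{e^{i\theta}}(A_w^*)$; both are fine.
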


\begin{proof} Let $X$ and $Y$ be $d\times d$ matrices with $X$ Hermitian. Let $X=PDP^T$ be an orthogonal diagonalisation with $D=\mathrm{diag}(\nu_1,\ldots,\nu_d)$ and $P=\begin{pmatrix}\vect{u}_1&\cdots&\vect{u}_d\end{pmatrix}$. Then
\begin{align*}
\tr(X^nY)&=\tr(PD^nP^TY)=\tr(D^nP^TYP)=\sum_{i=1}^d[P^TYP]_{i,i}\nu_i^n=\sum_{i=1}^d(\vect{u}_i^TY\vect{u}_i)\nu_i^n.
\end{align*}
Applying this to $X=\pi_{e^{i\theta}}(P)$ and $Y=\pi_{e^{i\theta}}(A_w^*)$ and using Lemma~\ref{lem:boundfull} gives
$$
\chi_{e^{i\theta}}(P^nA_w^*)=\left[\vect{v}_1(\theta)^T\pi_{e^{i\theta}}(A_w^*)\vect{v}_1(\theta)\right]\la_1(\theta)^n+\fo(\la_1^n).
$$
General perturbation theory gives $\vect{v}_1(\theta)=\vect{v}_1+\cO(\|\theta\|)$. Since the entries of the matrix $\pi_{e^{i\theta}}(A_w^*)$ satisfy $[\pi_{e^{i\theta}}(A_w^*)]_{ij}=[\pi_{1}(A_w^*)]_{ij}+\cO(\|\theta\|)$ (see Theorem~\ref{thm:combform}) it follows that 
\begin{align*}
\left[\vect{v}_1(\theta)^T\pi_{e^{i\theta}}(A_w^*)\vect{v}_1(\theta)\right]=\left[\vect{v}_1^T\pi_1(A_w^*)\vect{v}_1\right]\big(1+\cO(\|\theta\|)\big)&=C_w\big(1+\cO(\|\theta\|)\big).\qedhere
\end{align*}
\end{proof}

\begin{lemma}\label{lem:quadratic} We have
$$
\la_1(\theta)=\la_1\left(1-\beta\big(\theta_1^2+\theta_2^2+\theta_1\theta_2\big)+\cO(\|\theta\|^3)\right)\quad\textrm{where}\quad \beta=\frac{2}{9\la_1\sqrt{q^2+34q+1}}.
$$
\end{lemma}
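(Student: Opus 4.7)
The plan is to apply second-order Rayleigh--Schr\"odinger perturbation theory to the simple eigenvalue $\lambda_1$ of $\pi_1(P)$. Writing $V(\theta)=\pi_{e^{i\theta}}(P)-\pi_1(P)$ and expanding $V(\theta)=V_1(\theta)+V_2(\theta)+O(\|\theta\|^3)$ into homogeneous parts via $e^{i\phi}-1=i\phi-\phi^2/2+O(\phi^3)$, the standard perturbation expansion reads
\begin{equation*}
\lambda_1(\theta)=\lambda_1+\langle\vect{v}_1,V_2(\theta)\vect{v}_1\rangle+\sum_{j\neq 1}\frac{|\langle\vect{v}_j,V_1(\theta)\vect{v}_1\rangle|^2}{\lambda_1-\lambda_j}+O(\|\theta\|^3),
\end{equation*}
where $\{\vect{v}_j\}$ is an orthonormal eigenbasis of $\pi_1(P)$. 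The would-be linear-in-$\theta$ term $\langle\vect{v}_1,V_1(\theta)\vect{v}_1\rangle$ vanishes because $V_1$ is skew-Hermitian while $\vect{v}_1$ is real.

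For the diagonal contribution, summing the three symmetric pairs of nonzero $V_2$-entries against $\vect{v}_1\propto(a,1,1,a,a,1)$ and using $\cos\phi-1=-\phi^2/2+O(\phi^4)$ gives
$$\langle\vect{v}_1,V_2(\theta)\vect{v}_1\rangle=-\tfrac{2a}{3\sqrt{q}(a^2+1)}(\theta_1^2+\theta_2^2+\theta_1\theta_2)+O(\|\theta\|^4).$$
From the defining formula for $a$, I expect the algebraic identity $a^2+1=s(s-(q-1))/(18q)$ with $s=\sqrt{q^2+34q+1}$ to hold; this gives $a/(a^2+1)=3\sqrt{q}/s$, reducing the above to $-\tfrac{2}{3s}(\theta_1^2+\theta_2^2+\theta_1\theta_2)$.

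For the perturbative sum, I would choose orthonormal bases for the two-dimensional $\lambda_2$- and $\lambda_4$-eigenspaces. The matrix element with $\vect{v}_6\propto(-b,1,1,-b,-b,1)$ vanishes by the symmetric pattern $a\leftrightarrow -b$ of the two Perron-type eigenvectors, and each degenerate eigenspace contributes a multiple of $\theta_1^2+\theta_2^2+\theta_1\theta_2$ after a direct evaluation of the six relevant matrix elements of $V_1\vect{v}_1$. Writing $u=s-(q-1)$ and $v=s+(q-1)$, the combined bracket $\tfrac{a^2}{\lambda_1-\lambda_2}+\tfrac{1}{\lambda_1-\lambda_4}$ rewrites as $\tfrac{u}{6}+\tfrac{6q}{v}$, and the arithmetic identity $uv=s^2-(q-1)^2=36q$ makes these two summands equal, collapsing the bracket to $u/3$. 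Combining with the prefactor $2/(27q(a^2+1))$ using $a^2+1=su/(18q)$ produces a second-order contribution of exactly $+\tfrac{4}{9s}(\theta_1^2+\theta_2^2+\theta_1\theta_2)$. Adding, $\lambda_1(\theta)-\lambda_1=\bigl(-\tfrac{2}{3s}+\tfrac{4}{9s}\bigr)(\theta_1^2+\theta_2^2+\theta_1\theta_2)+O(\|\theta\|^3)=-\tfrac{2}{9s}(\theta_1^2+\theta_2^2+\theta_1\theta_2)+O(\|\theta\|^3)$, which factoring $\lambda_1$ yields the stated formula with $\beta=2/(9\lambda_1 s)$. The main obstacle is the algebraic bookkeeping: the six matrix-element computations over carefully normalized eigenvectors, and arranging the resulting rational expressions so that the identity $uv=36q$ produces the partial cancellation between the first- and second-order contributions from $-\tfrac{2}{3s}+\tfrac{4}{9s}$ down to $-\tfrac{2}{9s}$.
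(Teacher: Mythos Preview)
Your approach is exactly the Rayleigh--Schr\"odinger computation the paper intends: the paper's own proof merely invokes analyticity of a simple eigenvalue and then writes ``the details are omitted,'' so you are supplying precisely those details, and your arithmetic (the identities $a/(a^2+1)=3\sqrt{q}/s$, $a^2+1=su/(18q)$, $uv=36q$, and the collapse to $-\tfrac{2}{9s}$) checks out. One harmless slip: the displayed coefficient in $\langle\vect{v}_1,V_2\vect{v}_1\rangle$ should read $-\tfrac{2a}{9\sqrt{q}(a^2+1)}$ rather than $-\tfrac{2a}{3\sqrt{q}(a^2+1)}$ (the normalization $\|\vect{v}_1\|^2=3(a^2+1)$ supplies an extra $3$), after which your reduction to $-\tfrac{2}{3s}$ is correct.
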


\begin{proof} Since $\la_1$ has multiplicity~$1$, general results from perturbation theory imply that there is a neighbourhood of $(0,0)$ in which $\la_1(\theta)$ and $\vect{v}_1(\theta)$ are represented by convergent power series in the variables $\theta_1$ and $\theta_2$ (see \cite[Supplement, \S1]{baumgaertel}). The first few terms in these series can be computed in a few ways, for example by adapting the analysis of \cite[\S3.1.2]{baumgaertel} to the $2$-variable setting. The details are omitted. 
\end{proof}

\begin{thm} For the simple random walk on the chambers of a thick $\tilde{A}_2$ building with thickness $1<q<\infty$ we have
$$
p^{(n)}(c,d)=\frac{C_wq^{3-2\ell(w)}}{27\sqrt{3}\beta^4\pi(q-1)^6}\la_1^nn^{-4}\left(1+\cO\big(n^{-1/2}\big)\right)\qquad\textrm{if $\delta(c,d)=w$},
$$
where $\beta$ is as in Lemma~\ref{lem:quadratic} and $C_w$ is as in Lemma~\ref{lem:combine}.
\end{thm}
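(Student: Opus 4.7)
The plan is to combine the Plancherel formula of Theorem~\ref{thm:plancherel} with identity~\eqref{eq:radial}, which gives
$$p^{(n)}(c,d) = q_w^{-2}\Tr(P^nA_{w^{-1}}) = q^{-2\ell(w)}\Tr(P^nA_w^*),$$
since $q_w=q^{\ell(w)}$ and $A_{w^{-1}}=A_w^*$. I would first dispense with the two lower-dimensional spectral contributions. The rank-$1$ term equals $\pm q^{-3n/2}$ times a constant, hence is $\fo(\la_1^n)$ by Lemma~\ref{lem:boundfull}(3). For the 1-dimensional integral over $\TT$, Lemma~\ref{lem:boundfull}(2) furnishes a uniform bound $\max_i|\mu_i(\varphi)|\le\rho_1<\la_1$; since $\pi_{e^{i\varphi}}^{(1)}(P)$ is Hermitian, $|\chi_{e^{i\varphi}}^{(1)}(P^n A_w^*)|\le 3\rho_1^n\|\pi_{e^{i\varphi}}^{(1)}(A_w^*)\|_{\mathrm{op}}$, the latter being uniformly bounded on the compact set $\TT$, and the whole term is $\cO(\rho_1^n)$. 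Both contributions are exponentially smaller than the target $\la_1^n n^{-4}$.

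The heart of the argument is the asymptotic analysis of
$$I_n := \frac{1}{6q^3(2\pi)^2}\int_{[-\pi,\pi]^2}\frac{\chi_{e^{i\theta}}(P^n A_w^*)}{|c(e^{i\theta})|^2}\,d\theta_1\,d\theta_2.$$
By Lemma~\ref{lem:boundfull}(1) the only point of $[-\pi,\pi]^2$ at which some $|\la_i(\theta)|$ equals $\la_1$ is $\theta=0$; continuity and compactness then give $\max_i|\la_i(\theta)|\le\la_1-\eta$ uniformly outside a ball $B_\delta=\{\|\theta\|\le\delta\}$, so the contribution of $[-\pi,\pi]^2\setminus B_\delta$ is $\cO((\la_1-\eta)^n)$, again negligible. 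Inside $B_\delta$ I would combine Lemmas~\ref{lem:combine},~\ref{lem:quadratic} and~\ref{lem:c} to approximate the integrand by
$$\frac{C_w\,q^6}{(q-1)^6}\,\theta_1^2\theta_2^2(\theta_1+\theta_2)^2\,\la_1^n\,\exp\!\bigl(-n\beta Q(\theta)\bigr),\qquad Q(\theta)=\theta_1^2+\theta_2^2+\theta_1\theta_2,$$
up to a multiplicative error $1+\cO(\|\theta\|)+\cO(n\|\theta\|^3)$ plus an additive $\fo(\la_1^n)$ coming from the subdominant eigenvalues $\la_2(\theta),\dots,\la_6(\theta)$.

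Now apply Laplace's method via the substitution $\theta_i=u_i/\sqrt{n\beta}$: the Jacobian contributes $(n\beta)^{-1}$ while the degree-$6$ polynomial $\theta_1^2\theta_2^2(\theta_1+\theta_2)^2$ contributes $(n\beta)^{-3}$, producing the overall $n^{-4}$ rate. Truncating the rescaled integral to $\|u\|\le\delta\sqrt{n\beta}$ (the Gaussian tail outside this region is super-polynomially small), the residual error terms all become $\cO(n^{-1/2})$. This reduces the asymptotic evaluation to the Gaussian moment integral
$$\int_{\RR^2}u_1^2u_2^2(u_1+u_2)^2\,e^{-Q(u)}\,du,$$
which by Wick's theorem (using covariance $\Sigma=\tfrac{1}{3}\bigl(\begin{smallmatrix}2&-1\\-1&2\end{smallmatrix}\bigr)$ from the quadratic form $Q$, together with the normalisation $\int e^{-Q}=2\pi/\sqrt{3}$) evaluates to $\tfrac{8\pi}{9\sqrt{3}}$. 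Assembling all prefactors gives $I_n\sim \frac{C_w q^3\la_1^n}{27\sqrt{3}\pi(q-1)^6\beta^4 n^4}$, and multiplying by $q^{-2\ell(w)}$ yields the stated formula.

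The principal technical obstacle is the uniform control of the perturbative expansions on $B_\delta$: three separate cubic remainders must be combined and shown to produce a single uniform $\cO(n^{-1/2})$ correction on the rescaled domain. In particular, $\delta$ must be chosen small enough that the Taylor remainders in Lemmas~\ref{lem:c} and~\ref{lem:quadratic} are absorbed by $\cO(n^{-1/2})$ after rescaling, yet the complementary region $[-\pi,\pi]^2\setminus B_\delta$ must still contribute exponentially in $n$; the standard compromise is to truncate at some fixed $\delta$ depending only on $q$, using that $\la_1(\theta)^n=\la_1^n e^{-n\beta Q(\theta)}(1+\cO(n\|\theta\|^3))$ is valid uniformly on $B_\delta$ once $\delta$ is small enough that the quadratic form dominates the cubic remainder in the exponent.
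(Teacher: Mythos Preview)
Your proposal is correct and follows essentially the same approach as the paper: localise the principal-series integral near $\theta=0$ using Lemma~\ref{lem:boundfull}, approximate the integrand via Lemmas~\ref{lem:c}, \ref{lem:combine} and~\ref{lem:quadratic}, rescale by $\sqrt{n}$, and evaluate the resulting Gaussian moment $\int_{\RR^2}\varphi_1^2\varphi_2^2(\varphi_1+\varphi_2)^2 e^{-\beta(\varphi_1^2+\varphi_1\varphi_2+\varphi_2^2)}\,d\varphi=\tfrac{8\pi}{9\sqrt{3}\beta^4}$. The only cosmetic differences are that the paper rescales by $\sqrt{n}$ rather than $\sqrt{n\beta}$ and simply states the value of the Gaussian integral rather than invoking Wick's theorem.
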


\begin{proof} By (\ref{eq:start}), Theorem~\ref{thm:plancherel}, and Lemma~\ref{lem:boundfull} we have
\begin{align*}
p^{(n)}(c,d)=\frac{q_w^{-2}}{6q^3(2\pi)^2}\int_{-\pi}^{\pi}\int_{-\pi}^{\pi}\frac{\chi_{e^{i\theta}}(P^nA_w^*)}{|c(e^{i\theta})|^2}\,d\theta_1 d\theta_2+\fo(\la_1^n)\qquad\textrm{if $\delta(c,d)=w$},
\end{align*}
and so using Lemma~\ref{lem:boundfull} again we have 
\begin{align}\label{eq:dr}
p^{(n)}(c,d)=\frac{1}{24\pi^2q^{2\ell(w)+3}}\int_{-\epsilon}^{\epsilon}\int_{-\epsilon}^{\epsilon}\frac{\chi_{e^{i\theta}}(P^nA_w^*)}{|c(e^{i\theta})|^2}\,d\theta_1d\theta_2+\fo(\la_1^n)
\end{align}
for small $\epsilon>0$. Let $I_n$ be the double integral in (\ref{eq:dr}). Let $\varphi_1=\sqrt{n}\theta_1$ and $\varphi_2=\sqrt{n}\theta_2$. By Lemma~\ref{lem:c} we have
$$
\frac{1}{|c(e^{i\varphi/\sqrt{n}})|^2}=\frac{q^6}{(q-1)^6}g(\varphi)n^{-3}\big(1+\cO(n^{-1})\big),\quad\textrm{where}\quad g(\varphi)=\varphi_1^2\varphi_2^2(\varphi_1+\varphi_2)^2
$$
and so
$$
I_n=\frac{q^6}{(q-1)^6}n^{-4}\big(1+\cO(n^{-1})\big)\int_{-\sqrt{n}\epsilon}^{\sqrt{n}\epsilon}\int_{-\sqrt{n}\epsilon}^{\sqrt{n}\epsilon} g(\varphi)\chi_{e^{i\varphi/\sqrt{n}}}(P^nA_w^*)\,d\varphi_1d\varphi_2.
$$
By Lemma~\ref{lem:combine} we have
\begin{align*}
\chi_{e^{i\varphi/\sqrt{n}}}(P^nA_w^*)&=C_w\la_1(\varphi/\sqrt{n})^n\left(1+\cO(n^{-\frac{1}{2}})\right)+\fo(\la_1^n).
\end{align*}
Writing $h(\varphi)=\varphi_1^2+\varphi_1\varphi_2+\varphi_2^2$, Lemma~\ref{lem:quadratic} gives
\begin{align*}
\la_1(\varphi/\sqrt{n})^n&=\la_1^n\left(1-\beta h(\varphi)n^{-1}+\cO(n^{-3/2})\right)^n\\
&=\la_1^n\left(e^{-\beta h(\varphi)/n}+\cO\big(n^{-3/2}\big)\right)^n=\la_1^ne^{-\beta h(\varphi)}\left(1+\cO\big(n^{-1/2}\big)\right).
\end{align*}
Therefore
$$
I_n=C_w\frac{q^6}{(q-1)^6}\la_1^nn^{-4}\left(1+\cO\big(n^{-1/2}\big)\right)\int_{-\sqrt{n}\epsilon}^{\sqrt{n}\epsilon}\int_{-\sqrt{n}\epsilon}^{\sqrt{n}\epsilon} g(\varphi)e^{-\beta h(\varphi)}d\varphi_1d\varphi_2.
$$
The integral tends to
$$
\int_{-\infty}^{\infty}\int_{-\infty}^{\infty}g(\varphi)e^{-\beta h(\varphi)}\,d\varphi_1d\varphi_2=\frac{1}{\beta^4}\int_{-\infty}^{\infty}\int_{-\infty}^{\infty}g(\varphi)e^{-h(\varphi)}\,d\varphi_1d\varphi_2=\frac{8\pi}{9\sqrt{3}\beta^4},
$$
and the result follows from (\ref{eq:dr}).
\end{proof}

\begin{remark} The spectral radius formula $\la_1=\frac{3(q-1)+\sqrt{q^2+34q+1}}{6q}$ agrees with computations made by Saloff-Coste and Woess in \cite[Example~6]{woesslaurent}.
\end{remark}

\begin{remark}
Let $G=SL_3(\FF)$ where $\FF$ is a non-archimedean local field. Let $I$ be the \textit{standard Iwahori subgroup} of $G$, defined by the following diagram, where $\fo$ is the ring of integers in $\FF$ and where $\theta:\FF\to k$ is the canonical homomorphism onto the residue field $k$ (for example, $\FF=\FF_q((t))$, $\fo=\FF_q[[t]]$, $k=\FF_q$ and $\theta=\mathrm{ev}_{t=0}$).
\begin{align*}
\begin{matrix}
G &= &SL_3(\FF) \\
$\beginpicture
\setcoordinatesystem units <0.8cm,0.8cm>         
\setplotarea x from -0.1 to 0.3, y from -0.5 to 0.5  
\put{$\cup$} at 0 0 \put{$\shortmid$} at 0.21 0.08
\put{$\shortmid$} at 0.21 0.0
\endpicture$ &&$\beginpicture
\setcoordinatesystem units <0.8cm,0.8cm>         
\setplotarea x from -0.1 to 0.3, y from -0.5 to 0.5  
\put{$\cup$} at 0 0 \put{$\shortmid$} at 0.21 0.08
\put{$\shortmid$} at 0.21 0.0
\endpicture$ \\
K &=& SL_3(\fo) &\mapright{\theta} &SL_3(k) \\
$\beginpicture
\setcoordinatesystem units <0.8cm,0.8cm>         
\setplotarea x from -0.1 to 0.3, y from -0.5 to 0.5  
\put{$\cup$} at 0 0 \put{$\shortmid$} at 0.21 0.08
\put{$\shortmid$} at 0.21 0.0
\endpicture$ &&$\beginpicture
\setcoordinatesystem units <0.8cm,0.8cm>         
\setplotarea x from -0.1 to 0.3, y from -0.5 to 0.5  
\put{$\cup$} at 0 0 \put{$\shortmid$} at 0.21 0.08
\put{$\shortmid$} at 0.21 0.0
\endpicture$ &&$\beginpicture
\setcoordinatesystem units <0.8cm,0.8cm>         
\setplotarea x from -0.1 to 0.3, y from -0.5 to 0.5  
\put{$\cup$} at 0 0 \put{$\shortmid$} at 0.21 0.08
\put{$\shortmid$} at 0.21 0.0
\endpicture$ \\
I &= &\theta^{-1}(B(k)) &\mapright{\theta} &B(k)
\end{matrix}
\end{align*}
where $B(k)$ is the subgroup of upper triangular matrices in $SL_3(k)$. Then $G/I$ is the set of chambers of an $\tilde{A}_2$ building (and $G/K$ is the set of type~$0$ vertices of that building). Since $\cC_w(gI)=(gIwI)/I$ our local limit theorem gives a local limit theorem for bi-$I$-invariant probability measures on $G$.
\end{remark}

\section*{Part II: Harmonic analysis on affine Hecke algebras}

In this part we give an outline of some well known structural theory of affine Hecke algebras. We prove Opdam's generating function formula for the trace functional. Our argument is slightly different to Opdam's~\cite{opdamtrace} (we prove the formula by applying the harmonic analysis on the centre of the Hecke algebra). This formula is at the heart of harmonic analysis on affine Hecke algebras. We apply it to prove the Plancherel Theorem for type $\tilde{A}_2$, following the general technique of~\cite{opdamhanalysis}.

\section{Affine Weyl groups and alcove walks}\label{sect:4}

In this section we fix some standard notation on affine Weyl groups, and briefly discuss the combinatorics of \textit{alcove walks}. Alcove walks control many aspects of the representation theory of Lie algebras and Hecke algebras. Standard references for this section include \cite{bourbaki}, \cite{humphreys} and \cite{ramalcove}.

\subsection{Root systems and affine Weyl groups}

Let us fix some notation, mainly following \cite{bourbaki}.
\begin{enumerate}
\item[$\bullet$] Let $\fh$ be an $n$-dimensional real vector space with inner product $\langle\cdot,\cdot\rangle$.
\item[$\bullet$] For nonzero $\alpha\in \fh$ let $\alpha^{\vee}=2\alpha/\langle\alpha,\alpha\rangle$.
\item[$\bullet$] Let $R$ be a \textit{reduced irreducible root system} in~$\fh$ (see \cite{bourbaki} for the classification).
\item[$\bullet$] Let $\{\alpha_1,\ldots,\alpha_n\}$ be a set of \textit{simple roots} of $R$.
\item[$\bullet$] Let $R^+$ be the set of \textit{positive roots}. Let $\varphi\in R$ be the \textit{highest root}.
\item[$\bullet$] For $\alpha\in R$ let $H_{\alpha}=\{\la\in \fh\mid\langle\la,\alpha\rangle=0\}$ be the hyperplane orthogonal to~$\alpha$.
\item[$\bullet$] For $\alpha\in R$ let $s_{\alpha}\in GL(\fh)$ be the reflection $s_{\alpha}(\la)=\la-\langle\la,\alpha\rangle\alpha^{\vee}$ through~$H_{\alpha}$.
\item[$\bullet$] Let $Q=\ZZ\alpha_1^{\vee}+\cdots+\ZZ\alpha_n^{\vee}$ be the \textit{coroot lattice}, and $Q^+=\ZZ_{\geq0}\alpha_1^{\vee}+\cdots+\ZZ_{\geq0}\alpha_n^{\vee}$.
\item[$\bullet$] Let $\{\omega_1,\ldots,\omega_n\}$ be the dual basis to $\{\alpha_1,\ldots,\alpha_n\}$ defined by $\langle\omega_i,\alpha_j\rangle=\delta_{ij}$.
\item[$\bullet$] Let $P=\ZZ\omega_1+\cdots+\ZZ\omega_n$ be the \textit{coweight lattice}, and $P^+=\ZZ_{\geq0}\omega_1+\cdots+\ZZ_{\geq0}\omega_n$ be the cone of \textit{dominant coweights}.
\end{enumerate}
In Figure~\ref{fig:A_2} the lattice $Q$ consists of the centres of the solid hexagons, and the latttice $P$ consist of all vertices in the picture. 

The \textit{Weyl group} $W_0$ of $R$ is the subgroup of $GL(\fh)$ generated by $\{s_{\alpha}\mid\alpha\in R\}$. The Weyl group is a finite Coxeter group with distinguished generators $s_1,\ldots,s_n$ (where $s_i=s_{\alpha_i}$) and thus has a length function $\ell:W_0\to\ZZ_{\geq0}$, with $\ell(w)$ being the smallest $\ell\geq0$ such that $w=s_{i_1}\cdots s_{i_{\ell}}$. Let $w_0$ be the (unique) longest element of~$W_0$. The \textit{inversion set} of $w\in W_0$ is
$$
R(w)=\{\alpha\in R^+\mid w^{-1}\alpha\in-R^+\},\qquad\textrm{and}\qquad  \ell(w)=|R(w)|.
$$

The open connected components of $\fh\backslash\bigcup_{\alpha\in R}H_{\alpha}$ are \textit{Weyl sectors}. These are open simplicial cones, and $W_0$ acts simply transitively on the set of Weyl sectors. The \textit{fundamental Weyl sector} is
$
S_0=\{\la\in\fh\mid\langle\la,\alpha_i\rangle>0\textrm{ for }i=1,\ldots,n\},
$
and $P^+=P\cap\overline{S_0}$, where $\overline{S_0}$ is the closure of $S_0$ in $\fh$.

The roots $\alpha\in R$ can be regarded as elements of $\fh^*$ by setting $\alpha(\la)=\langle\la,\alpha\rangle$ for~$\la\in\fh$. Let $\delta:\fh\to\RR$ be the (non-linear) constant function with $\delta(\la)=1$ for all $\la\in\fh$. The \textit{affine root system} is $R_{\mathrm{aff}}=R+\ZZ\delta$. The \textit{affine hyperplane} for the affine root $\alpha+j\delta$ is
$$
H_{\alpha+j\delta}=\{\la\in\fh\mid\langle\la,\alpha+j\delta\rangle=0\}=\{\la\in\fh\mid\langle\la,\alpha\rangle=-j\}=H_{-\alpha-j\delta}.
$$
The \textit{affine Weyl group} is the subgroup $W$ of $\mathrm{Aff}(\fh)$ generated by the reflections $s_{\alpha+k\delta}$ with $\alpha+k\delta\in R_{\mathrm{aff}}$, where the reflection $s_{\alpha+k\delta}:\fh\to\fh$ is given by the formula
$
s_{\alpha+k\delta}(\la)=\la-(\langle\la,\alpha\rangle+k)\alpha^{\vee}
$ for $\la\in\fh$.
Let $\alpha_0=-\varphi+\delta$ (with $\varphi$ the highest root of $R$). The affine Weyl group is a Coxeter group with distinguished generators $s_0,s_1,\ldots,s_n$, where $s_0=s_{\alpha_0}$. For $\mu\in\fh$, let $t_{\mu}:\fh\to\fh$ be the translation $t_{\mu}(\la)=\la+\mu$ for all $\la\in\fh$. Then $s_{\alpha+k\delta}=t_{-k\alpha^{\vee}}s_{\alpha}$ and $W$ is the semidirect product $W=Q\rtimes W_0$.

The open connected components of $\fh\backslash\bigcup_{\beta\in R_{\mathrm{aff}}}H_{\beta}$ are \textit{chambers} (or \textit{alcoves}). The \textit{fundamental chamber} is
$$
c_0=\{\la\in\fh\mid\langle\la,\alpha_i\rangle>0\textrm{ for all $i=0,\ldots,n$}\}\subset S_0.
$$
The affine Weyl group acts simply transitively on the set of chambers, and therefore $W$ is in bijection with the set of chambers. Identify $1$ with $c_0$. 

The \textit{extended affine Weyl group} $\tilde{W}=P\rtimes W_0$ acts transitively (but in general not simply transitively) on the set of chambers. In general $\tilde{W}$ is not a Coxeter group, but it is ``nearly'' a Coxeter group: There is a length function $\ell:\tilde{W}\to\ZZ_{\geq0}$ defined by 
$
\ell(w)=|\{H_{\alpha+j\delta}\mid \textrm{$H_{\alpha+j\delta}$ separates $c_0$ from $wc_0$}\}|,
$
and for $w\in W\subseteq \tilde{W}$ this agrees with the Coxeter length function. Let $\Gamma=\{w\in\tilde{W}\mid\ell(w)=0\}$. Then $\tilde{W}=W\rtimes\Gamma$, and $\Gamma$ is isomorphic to the finite abelian group~$P/Q$. Therefore $\tilde{W}$ acts simply transitively on the set of chambers in $\fh\times\Gamma$, and so $\tilde{W}$ can be thought of as $|\Gamma|$ copies of $W$.

If $w\in \tilde{W}=P\rtimes W_0$ we define the \textit{weight} $\wt(w)\in P$ and the \textit{final direction} $\theta(w)\in W_0$ by the equation
\begin{align}\label{eq:weightdirection}
w=t_{\wt(w)}\theta(w).
\end{align}

The \textit{Bruhat partial order} on $W$ is defined as follows: $v\leq w$ if and only if $v$ is a `subexpression' of a reduced expression $w=s_{i_1}\cdots s_{i_{\ell}}$ for $w$. Here subexpression means an expression obtained by deleting one or more factors from the expression $w=s_{i_1}\cdots s_{i_{\ell}}$. If $v\leq w$ then $v$ is a subexpression of \textit{every} reduced expression for~$w$. The Bruhat order extends to $\tilde{W}$ by setting $v\leq w$ if and only if $w=w'\gamma$ and $v=v'\gamma$ with $w',v'\in W$ and $\gamma\in\Gamma$ and $v'\leq w'$.

\subsection{Alcove Walks}

Each affine hyperplane $H_{\alpha+k\delta}$ determines two closed halfspaces of $\fh$. Define an \textit{orientation} on the affine hyperplane $H_{\alpha+k\delta}$ by declaring the positive side to be the half space which contains a subsector of the fundamental sector~$S_0$. Explicitly, if $\alpha\in R^+$ and $k\in\ZZ$ then the negative and positive sides of $H_{\alpha+k\delta}$ are
\begin{align*}
H_{\alpha+k\delta}^-&=\{x\in\fh\mid\langle x,\alpha+k\delta\rangle\leq0\}=\{x\in\fh\mid \langle x,\alpha\rangle\leq -k\},\\
H_{\alpha+k\delta}^+&=\{x\in\fh\mid\langle x,\alpha+k\delta\rangle\geq0\}=\{x\in\fh\mid\langle x,\alpha\rangle\geq -k\}.
\end{align*}
See the picture in Example~\ref{ex:A_2}; note that this orientation is translation invariant.

Let $\vec w=s_{i_1}\cdots s_{i_{\ell}}\gamma$ be an expression for $w\in \tilde{W}$, with $\gamma\in\Gamma$. A \textit{positively folded alcove walk of type~$\vec w$} is a sequence of steps from alcove to alcove in $\tilde{W}$, starting at $1\in \tilde{W}$, and made up of the symbols
\begin{align}\label{eq:symbols}
\beginpicture
\setcoordinatesystem units <0.8cm,0.8cm>         
\setplotarea x from -0.8 to 0.7, y from -0.5 to 0.5  
\put{$\scriptstyle{-}$}[b] at -0.4 0.25
\put{$\scriptstyle{+}$}[b] at 0.4 0.25
\put{$\scriptstyle{x}$}[br] at -0.6 0.1
\put{$\scriptstyle{xs_i}$}[bl] at 0.6 0.1
\plot  0 -0.4  0 0.5 /
\arrow <5pt> [.2,.67] from -0.5 0 to 0.5 0   %
\put{(\textit{positive $i$-crossing})} at 0 -1
\endpicture
\qquad\qquad
\beginpicture
\setcoordinatesystem units <0.8cm,0.8cm>         
\setplotarea x from -0.8 to 0.7, y from -0.5 to 0.5  
\put{$\scriptstyle{-}$}[b] at -0.4 0.35
\put{$\scriptstyle{+}$}[b] at 0.4 0.35
\put{$\scriptstyle{x}$}[bl] at 0.6 0.1
\put{$\scriptstyle{xs_i}$}[br] at -0.6 0.1
\plot  0 -0.4  0 0.6 /
\plot 0.5 0  0.05 0 /
\arrow <5pt> [.2,.67] from 0.05 0.1 to 0.5 0.1   %
\plot 0.05 0 0.05 0.1 /
\put{(\textit{positive $i$-fold})} at 0 -1
\endpicture
\qquad\qquad
\beginpicture
\setcoordinatesystem units <0.8cm,0.8cm>         
\setplotarea x from -0.8 to 0.7, y from -0.5 to 0.5  
\put{$\scriptstyle{-}$}[b] at -0.4 0.25
\put{$\scriptstyle{+}$}[b] at 0.4 0.25
\put{$\scriptstyle{x}$}[bl] at 0.6 0.1
\put{$\scriptstyle{xs_i}$}[br] at -0.6 0.1
\plot  0 -0.4  0 0.5 /
\arrow <5pt> [.2,.67] from 0.5 0 to -0.5 0   %
\put{(\textit{negative $i$-crossing})} at 0 -1 
\endpicture
\end{align}
where the $k$th step has $i={i_k}$ for $k=1,\ldots,\ell$. To take into account the sheets of $\tilde{W}$, one concludes the alcove walk by ``jumping'' to the $\gamma$ sheet of $\fh\times\Gamma$. Our pictures will always be drawn without this jump by projecting $\fh\times\Gamma\rightarrow\fh\times \{1\}$.

Let $p$ be a positively folded alcove walk. For each $i=0,1,\ldots,n$ let
\begin{align*}
f_i(p)&=\#\textrm{(type $i$-folds in $p$)}.
\end{align*}

Let $\vec{w}=s_{i_1}\cdots s_{i_{\ell}}\gamma$ be a reduced expression for $w\in \tilde{W}$. Define
\begin{align}\label{eq:cP}
\cP(\vec w)=\{\textrm{all positively folded alcove walks of type $\vec w$}\}.
\end{align}
Let $\mathrm{end}(p)\in\tilde{W}$ be the alcove where $p$ ends. By the definition of the Bruhat order it is clear that if $p\in\cP(\vec w)$ (with $\vec w$ reduced) then 
\begin{align}\label{eq:endbruhat}
\mathrm{end}(p)\leq w\qquad\textrm{in Bruhat order}.
\end{align}
Define the \textit{weight} $\wt(p)\in P$ and \textit{final direction} $\theta(p)\in W_0$ by the equation
\begin{align}\label{eq:finaleq}
\mathrm{end}(p)=t_{\wt(p)}\theta(p).
\end{align}
The \textit{dominance order} on $P$ is given by $\mu\preceq \la$ if and only if $\la-\mu\in Q^+$. It is not difficult to show that if $p\in\cP(\vec w)$ then
\begin{align}\label{eq:enddominance}
\wt(w)\preceq\wt(p).
\end{align}
This is a consequence of the paths being `positively' folded.

\begin{example} The positively folded alcove walk $p$ 
 \begin{figure}[h]
 \begin{center}
       \includegraphics[totalheight=4.5cm]{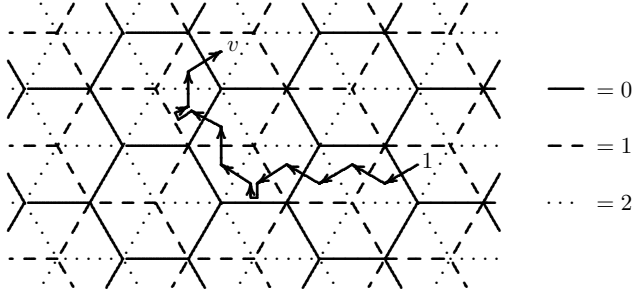}
 \end{center}
 \caption{A positively folded alcove walk in type $\tilde{A}_2$}\label{fig:walk1}
 \end{figure}

\noindent has type $\vec{w}=s_0s_1s_2s_0s_1s_0s_2s_1s_0s_1s_2s_0$ (this is reduced). The end chamber of $p$ is 
$\mathrm{end}(p)=v=s_0s_1s_2s_0s_1s_2s_1s_0s_2s_0=s_0s_1s_2s_0s_2s_1s_0s_2\leq w,$
and $f_0(p)=1$, $f_1(p)=1$, and $f_2(p)=0$. We have $\wt(p)=4\omega_1-\omega_2$ and $\wt(w)=5\omega_1-6\omega_2$. Note that $\wt(p)-\wt(w)=\alpha_1^{\vee}+3\alpha_2^{\vee}\in Q^+$ and so $\wt(w)\preceq\wt(p)$.
\end{example}

\begin{example} In type $\tilde{A}_2$, the set $\cP(s_1s_2s_1s_0)$ consists of the $10$ paths in Figure~\ref{fig:walks2} (arranged according to $\wt(p)$).

 \begin{figure}[h]
 \begin{center}
       \includegraphics[totalheight=4.5cm]{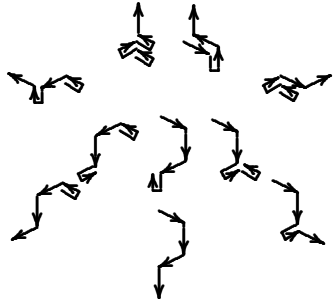}
 \end{center}
 \caption{Positively folded alcove walks of type $\vec w=s_1s_2s_1s_0$}\label{fig:walks2}
 \end{figure} 
 
\noindent The bottom path is $\vec{w}$ (the path with no folds). Note that all other paths have $\mathrm{end}(p)\leq w$ and $\wt(\vec{w})\preceq\wt(p)$.
\end{example}

\subsection{Parameter systems}

Let $R,W_0,W,\tilde{W}$, etc be as above. A \textit{parameter system} is a set $\mathbf{q}=\{q_0,q_1,\ldots,q_n\}$ such that
(i) $q_i>1$ for each $i=0,1,\ldots,n$, and (ii) $q_i=q_j$ whenever $s_i$ and $s_j$ are conjugate in $W$. For example, the parameters of a locally finite regular building form a parameter system. A parameter system is \textit{reduced} if it satisfies (iii) if $R$ is of type $A_1$ then $q_0=q_1$, and if $R$ is of type $C_n$ then $q_0=q_n$. The `reduced' hypothesis can be removed, but without it some of the subsequent formulae become more complex.

Let $\mathbf{q}$ be a reduced parameter system. By \cite[IV, \S1, No.5, Prop~5]{bourbaki} 
$$
q_w:=q_{i_1}\cdots q_{i_{\ell}}\qquad\textrm{if $w=s_{i_1}\cdots s_{i_{\ell}}\in W$ is a reduced expression}
$$
does not depend on the choice of reduced expression. Extend this definition to $\tilde{W}$ be setting
$q_{w\gamma}=q_w$ whenever $w\in W$ and $\gamma\in\Gamma$. For $\alpha\in R$, define $q_{\alpha}$ by
$$
q_{\alpha}=q_i\qquad\textrm{if $\alpha\in W_0\alpha_i$}.
$$
Since $\alpha\in W_0\alpha_i\cup W_0\alpha_j$ implies that $s_j=ws_iw^{-1}$ for some $w\in W_0$ this definition is unambiguous. 

\subsection{Extended affine Hecke algebras}

\begin{defn} Let $\tilde{W}$ be an extended affine Weyl group and let $\mathbf{q}$ be a reduced parameter system. The \textit{extended affine Hecke algebra with Weyl group $\tilde{W}$ and parameter system $\mathbf{q}$} is the algebra $\scH$ over $\CC$ with generators 
$
T_w
$ ($w\in \tilde{W}$) and defining relations
\begin{align*}
T_uT_v&=T_{uv}&&\textrm{if $\ell(uv)=\ell(u)+\ell(v)$}\\
T_wT_{s_i}&=T_{ws_i}+(q_i^{\frac{1}{2}}-q_i^{-\frac{1}{2}})T_w&&\textrm{if $\ell(ws_i)=\ell(w)-1$}.
\end{align*}
We will usually drop the adjective `extended' and call $\scH$ the affine Hecke algebra.
\end{defn}

\begin{remark} We often write $T_i$ in place of $T_{s_i}$ for $i=0,1,\ldots,n$. One immediately sees that each $T_i$ is invertible, with inverse $T_i^{-1}=T_i-(q_i^{\frac{1}{2}}-q_i^{-\frac{1}{2}})$, and that $T_{\gamma}^{-1}=T_{\gamma^{-1}}$ for $\gamma\in \Gamma$. It follows that each $T_w$, $w\in \tilde{W}$, is invertible. 
\end{remark}

\begin{remark}\label{rem:isomorphism}
If $q_0,q_1,\ldots,q_n$ are the parameters of a locally finite regular building then the subalgebra $\scH_{W}$ of $\scH$ generated by $T_w$, $w\in W$, is isomorphic to $\scA$, with $T_w\mapsto q_w^{1/2}A_w$ (see Proposition~\ref{prop:relations}). This renormalisation leads to neater formulas in the Hecke algebra theory. Also it is more convenient to work in the larger extended Hecke algebra.
\end{remark}

\section{Structure of affine Hecke algebras}\label{sect:5}

This section is classical and well known to experts. Standard references include \cite{lusztig}, \cite{macblue}, \cite{ram1}, and \cite{xi}. The main results we describe are:
\begin{enumerate}
\item[$\bullet$] The \textit{Bernstein presentation}. This realises the semidirect product structure $\tilde{W}=P\rtimes W_0$ of the extended affine Weyl group at the Hecke algebra level.
\item[$\bullet$] The computation of the centre of $\scH$. This is useful because the centre of an algebra plays an important role in its representation theory.
\item[$\bullet$] The derivation of the \textit{Macdonald formula}. This formula is key to the Plancherel formula on the centre of~$\scH$. 
\end{enumerate}

\subsection{Bernstein presentation of $\scH$}

Let $v\in \tilde{W}$, and choose \textit{any} expression $v=s_{i_1}\cdots s_{i_{\ell}}\gamma$ for $v$ (not necessarily reduced). Interpret this expression as an alcove walk with no folds starting at the alcove $1\in \tilde{W}$. Let $\epsilon_1,\ldots,\epsilon_{\ell}\in\{-1,+1\}$ be the signs of the crossings of this walk. The element
$$
x_{v}=T_{i_1}^{\epsilon_1}\cdots T_{i_{\ell}}^{\epsilon_{\ell}}T_{\gamma}
$$
does not depend on the particular expression for $v$ chosen (see \cite{goertz}).

\begin{prop}\label{prop:pathformula} Let $w\in\tilde{W}$, and choose a reduced expression $\vec{w}=s_{i_1}\cdots s_{i_{\ell}}\gamma$. Then
$$
T_w=\sum_{p\in\cP(\vec{w})}\cQ(p)x_{\mathrm{end}(p)}\qquad\textrm{where}\qquad\cQ(p)=\prod_{i=0}^n(q_i^{\frac{1}{2}}-q_i^{-\frac{1}{2}})^{f_i(p)}.
$$
\end{prop}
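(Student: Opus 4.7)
The plan is induction on $\ell=\ell(w)$, peeling off the last simple reflection of the reduced expression. The base case $\ell=0$ is immediate: $w=\gamma\in\Gamma$, the set $\cP(\vec{w})$ contains only the empty walk $p$ with $\mathrm{end}(p)=\gamma$ and $\cQ(p)=1$, and the formula reduces to the definition $T_\gamma=x_\gamma$.

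For the inductive step I would write $\vec{w}=\vec{v}\cdot s_i\gamma$, where $\vec{v}=s_{i_1}\cdots s_{i_{\ell-1}}$ is a reduced expression for some $v\in W$ with $\ell(v)=\ell-1$ and $i=i_\ell$. Since $\ell(vs_i\gamma)=\ell(v)+1$, the Hecke relations give $T_w=T_v T_i T_\gamma$, and by the inductive hypothesis
$$
T_v=\sum_{p'\in\cP(\vec{v})}\cQ(p')\,x_{\mathrm{end}(p')}.
$$
The heart of the argument is then the expansion of $x_u T_i$ for each $u=\mathrm{end}(p')$, using the defining identity $x_{us_i}=x_u T_i^{\epsilon}$, where $\epsilon\in\{+1,-1\}$ is the sign of the wall crossing from $u$ to $us_i$. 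This sign depends only on the group element $u$ (and not on the expression representing it), by the well-definedness result cited from~\cite{goertz}.

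If $\epsilon=+1$ (so $u$ lies on the negative side of its type-$i$ wall), then $x_u T_i=x_{us_i}$, matching the unique admissible extension of $p'$ at step $\ell$: a positive $i$-crossing. If $\epsilon=-1$ (so $u$ lies on the positive side), then $x_u=x_{us_i}T_i$, and the quadratic relation $T_i^2=1+(q_i^{1/2}-q_i^{-1/2})T_i$ gives
$$
x_u T_i=x_{us_i}T_i^2=x_{us_i}+(q_i^{1/2}-q_i^{-1/2})\,x_u,
$$
whose two terms match exactly the two extensions permitted by~\eqref{eq:symbols} in this case: a negative $i$-crossing (new endpoint $us_i$, $\cQ$-weight unchanged) and a positive $i$-fold (endpoint $u$, $\cQ$-weight multiplied by $q_i^{1/2}-q_i^{-1/2}$). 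Finally, right-multiplication by $T_\gamma$ implements $x_z\mapsto x_{z\gamma}$, encoding the concluding $\gamma$-jump in the definition of $\cP(\vec{w})$. Summing over $p'\in\cP(\vec{v})$ and their admissible extensions enumerates $\cP(\vec{w})$ exactly once with the correct weights.

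The main obstacle is essentially bookkeeping rather than anything deep: one must verify that the sign $\epsilon$ attached to an algebraic element $u$ coincides with the geometric orientation of the type-$i$ wall adjacent to the alcove $u$, so that the algebraic dichotomy for $x_u T_i$ matches the geometric trichotomy of admissible walk extensions given by~\eqref{eq:symbols}. Once this matching is in place, the induction closes mechanically.
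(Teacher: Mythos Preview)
Your proposal is correct and follows essentially the same route as the paper, which merely says ``an easy induction using the formula $T_i=T_i^{-1}+(q_i^{1/2}-q_i^{-1/2})$''. Your use of the quadratic relation $T_i^2=1+(q_i^{1/2}-q_i^{-1/2})T_i$ in the $\epsilon=-1$ case is an equivalent repackaging of that identity (indeed $x_uT_i=x_uT_i^{-1}+(q_i^{1/2}-q_i^{-1/2})x_u=x_{us_i}+(q_i^{1/2}-q_i^{-1/2})x_u$ directly), and your careful matching of the algebraic dichotomy with the three walk-extension types in~\eqref{eq:symbols} just makes explicit what the paper leaves to the reader.
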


\begin{proof} This is an easy induction using the formula $T_i=T_i^{-1}+(q_i^{\frac{1}{2}}-q_i^{-\frac{1}{2}})$.
\end{proof}

\begin{cor}\label{cor:basis} The set $\{x_v\mid v\in \tilde{W}\}$ is a basis of $\scH$. The transition matrices converting between the bases $\{T_w\mid w\in \tilde{W}\}$ and $\{x_v\mid v\in \tilde{W}\}$ are upper triangular with respect to the Bruhat order, and have $1$s on the main diagonal.
\end{cor}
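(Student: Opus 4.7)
The plan is to read off everything from Proposition~\ref{prop:pathformula} together with the Bruhat monotonicity (\ref{eq:endbruhat}), and then invert the resulting triangular system.

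First I would fix $w\in\tilde W$ together with a reduced expression $\vec w=s_{i_1}\cdots s_{i_\ell}\gamma$ and apply Proposition~\ref{prop:pathformula} to write
\[
T_w=\sum_{p\in\cP(\vec w)}\cQ(p)\,x_{\mathrm{end}(p)}=\sum_{v\in\tilde W}\Bigg(\sum_{\substack{p\in\cP(\vec w)\\ \mathrm{end}(p)=v}}\cQ(p)\Bigg) x_v.
\]
By (\ref{eq:endbruhat}) every index $v$ appearing satisfies $v\le w$, so the matrix expressing the $T_w$ in the $x_v$ is already upper triangular in the Bruhat order.

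The key observation, which I would record next, is that the diagonal coefficient is $1$: if $p\in\cP(\vec w)$ has at least one fold, then the end alcove is $\mathrm{end}(p)=s_{j_1}\cdots s_{j_k}\gamma$, where $s_{j_1}\cdots s_{j_k}$ is obtained from the reduced word $s_{i_1}\cdots s_{i_\ell}$ by deleting the letters sitting at each fold; hence $\ell(\mathrm{end}(p))\le\ell-1<\ell(w)$, so $\mathrm{end}(p)<w$ strictly. Thus the only path with $\mathrm{end}(p)=w$ is the unique fold-free path $\vec w$ itself, for which $f_i(p)=0$ and $\cQ(p)=1$. Therefore
\[
T_w=x_w+\sum_{v<w}c_{w,v}\,x_v\qquad\text{for some }c_{w,v}\in\CC.
\]

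With the triangular relation in hand, I would invert it by induction on the Bruhat order (well-founded because $\{v:v\le w\}$ is finite). The base case $w=1$ gives $x_1=T_1$. For the inductive step, the triangular identity reads $x_w=T_w-\sum_{v<w}c_{w,v}x_v$, and substituting the inductive expressions for each $x_v$ shows that $x_w\in\sum_{u\le w}\CC\,T_u$ with the coefficient of $T_w$ equal to $1$. Since $\{T_w\mid w\in\tilde W\}$ is known to be a basis of $\scH$, this shows $\{x_v\mid v\in\tilde W\}$ spans $\scH$, and the inverse transition matrix is upper triangular with $1$'s on the diagonal. Linear independence is immediate: in any finite relation $\sum_v a_v x_v=0$, choose $v_0$ maximal in Bruhat order with $a_{v_0}\ne0$; expanding each $x_v$ in the $T_u$ basis, the coefficient of $T_{v_0}$ is exactly $a_{v_0}$, forcing $a_{v_0}=0$, a contradiction.

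The only subtle point---and the step I would flag as the main obstacle---is the strictness in step two, namely that a folded positive walk of reduced type $\vec w$ cannot end back at $w$. Once one is comfortable that a fold means the walk stalls at a chamber while the reduced word marches forward, so that $\mathrm{end}(p)$ is encoded by a proper subword of $\vec w$, the length drop $\ell(\mathrm{end}(p))<\ell(w)$ is forced and the whole argument is essentially bookkeeping on top of Proposition~\ref{prop:pathformula}.
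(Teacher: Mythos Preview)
Your proof is correct and follows exactly the approach implicit in the paper: the corollary is stated without proof immediately after Proposition~\ref{prop:pathformula}, and the intended argument is precisely to combine that formula with (\ref{eq:endbruhat}), note that the unique fold-free path gives the diagonal coefficient $1$, and then invert the unitriangular system. Your handling of the strictness point (a fold deletes a letter from the reduced word, so $\ell(\mathrm{end}(p))<\ell(w)$) is the right justification and matches what the paper leaves to the reader.
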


For $\mu\in P$, define 
$
x^{\mu}=x_{t_{\mu}}.
$

The relations in the following presentation of $\scH$ are the algebra analogues of the defining relations: 
$$
s_i^2=1,\qquad \underbrace{s_is_js_i\cdots}_{\textrm{$m_{ij}$ terms}}=\underbrace{s_js_is_j\cdots}_{\textrm{$m_{ij}$ terms}},\qquad t_{\la}t_{\mu}=t_{\la+\mu}=t_{\mu+\la},\qquad s_it_{\la}=t_{s_i\la}s_i.
$$
($i,j=1,\ldots,n$ and $\la,\mu\in P$) in the extended affine Weyl group $\tilde{W}$.

\begin{thm}[Bernstein Presentation]\label{thm:pres} For all $i,j=1,\ldots,n$ and all $\la,\mu\in P$ we have
    \begin{align*}
    T_i^2&=1+(q_i^{\frac{1}{2}}-q_i^{-\frac{1}{2}})T_i\\
    T_iT_jT_i\cdots&=T_jT_iT_j\cdots&&\textrm{($m_{ij}$ terms on each side)}\\
    x^{\la}x^{\mu}&=x^{\la+\mu}=x^{\mu}x^{\la}\\
    T_{i}x^{\mu}
    &=x^{s_i\mu}T_{i}+(q_i^{\frac{1}{2}}-q_i^{-\frac{1}{2}})
    \frac{x^{\mu}-x^{s_i\mu}}{1-x^{-\alpha_i^{\vee}}}&&\textrm{(the Bernstein relation)}.
    \end{align*}
\end{thm}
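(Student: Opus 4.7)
The plan is to verify the four families of relations in turn, exploiting the alcove-walk description of the $\{x_v\}$ basis from Proposition~\ref{prop:pathformula} together with the translation invariance of the hyperplane orientations noted just before~(\ref{eq:symbols}). The quadratic relation is immediate: the second defining relation of $\scH$ applied to $w = s_i$ (with $\ell(s_is_i) = 0 = \ell(s_i)-1$) yields $T_i^2 = T_1 + (q_i^{\frac{1}{2}}-q_i^{-\frac{1}{2}})T_i$. The braid relations hold because both $s_is_js_i\cdots$ and $s_js_is_j\cdots$ (with $m_{ij}$ factors each) are reduced expressions for the longest element of the dihedral subgroup $\langle s_i,s_j\rangle$, so iterating the first defining relation $T_uT_v = T_{uv}$ under additive length collapses both products to a common $T_w$.

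For $x^\la x^\mu = x^{\la+\mu} = x^\mu x^\la$, fix any expressions for $t_\la$ and $t_\mu$ in $\tilde W$ and let $\epsilon_\bullet$ and $\eta_\bullet$ be the crossing-sign sequences read from the respective alcove walks at $c_0$. Concatenating these (after commuting the $\Gamma$-factor of the first past the generators of the second via the defining relations of $\tilde W$) gives an expression for $t_\la t_\mu = t_{\la+\mu}$. The crucial point is that translation by $\la$ preserves hyperplane orientations, so the translated second walk now starting at $t_\la c_0$ has the \emph{same} sign sequence $\eta_\bullet$ as the original walk from $c_0$. Hence $x^\la x^\mu$ coincides symbol-for-symbol with the intrinsic element $x^{\la+\mu}$ computed from the concatenated expression; commutativity follows from the symmetric argument.

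The Bernstein relation is the main obstacle. Writing $E(\mu) := (q_i^{\frac{1}{2}}-q_i^{-\frac{1}{2}})(x^\mu - x^{s_i\mu})/(1-x^{-\alpha_i^\vee})$, a short computation in the commutative subalgebra generated by the $x^\nu$'s establishes the cocycle identity $x^{s_i\mu}E(\nu) + E(\mu)x^\nu = E(\mu+\nu)$, which together with $x^{\mu+\nu} = x^\mu x^\nu$ shows that the Bernstein relation at $\mu$ and at $\nu$ implies it at $\mu+\nu$. A symmetric manipulation (right-multiply by $x^{-\mu}$, rearrange, left-multiply by $x^{-s_i\mu}$) shows the relation at $\mu$ implies it at $-\mu$. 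These two reductions leave only the base cases $\mu = \omega_j$ for $j = 1,\ldots,n$. For $j\ne i$ one has $s_i\omega_j = \omega_j$, so $t_{\omega_j}$ commutes with $s_i$ in $\tilde W$, and a length computation gives $\ell(s_it_{\omega_j}) = \ell(t_{\omega_j}s_i) = \ell(t_{\omega_j})+1$; since $\omega_j\in P^+$ forces $x^{\omega_j} = T_{t_{\omega_j}}$ (every crossing in a minimal gallery from $c_0$ to $t_{\omega_j}c_0$ is positive), the first defining relation of $\scH$ immediately produces $T_ix^{\omega_j} = x^{\omega_j}T_i$, matching $E(\omega_j) = 0$. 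The delicate case is $j = i$, where $s_i\omega_i = \omega_i-\alpha_i^\vee$: here I would use $\ell(s_it_{\omega_i}) = \ell(t_{\omega_i})+1$ (because $t_{-\omega_i}\alpha_i = \alpha_i+\delta$ is a positive affine root) to rewrite $T_iT_{t_{\omega_i}} = T_{t_{\omega_i-\alpha_i^\vee}s_i} = T_{t_{\omega_i-\alpha_i^\vee}}T_i$, then expand $T_{t_{\omega_i-\alpha_i^\vee}}$ in the $x$-basis via Proposition~\ref{prop:pathformula} and use $T_i = T_i^{-1} + (q_i^{\frac{1}{2}}-q_i^{-\frac{1}{2}})$ to split the result into $x^{\omega_i-\alpha_i^\vee}T_i$ plus the correction $(q_i^{\frac{1}{2}}-q_i^{-\frac{1}{2}})x^{\omega_i}$. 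This alcove-walk bookkeeping for the non-dominant translation $t_{\omega_i-\alpha_i^\vee}$ is the step that really uses the shape of the $E$-term, and is where I expect the main difficulty to lie.
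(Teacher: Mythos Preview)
The paper does not actually prove this theorem: its entire proof is the sentence ``These facts can be deduced from the alcove walk setup. See~[ramalcove].'' So there is nothing to compare against, and your write-up already contains far more than the paper does. Your approach is the classical Lusztig--Macdonald route (cocycle reduction of the Bernstein relation to the fundamental coweights), and it is correct in outline: the quadratic and braid relations are immediate, the commutativity $x^\la x^\mu=x^{\la+\mu}$ via translation invariance of the orientation is exactly right, the cocycle identity for $E(\mu)$ and the reduction to $\mu=\pm\omega_j$ are correct, and the $j\ne i$ base case is fine.

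The only place that needs sharpening is the $j=i$ base case. You correctly obtain $T_i x^{\omega_i}=T_{t_{s_i\omega_i}}T_i$ from the length computations, but the proposed next step --- expand $T_{t_{s_i\omega_i}}$ via Proposition~\ref{prop:pathformula} and hope the folded terms assemble into $(q_i^{1/2}-q_i^{-1/2})x^{\omega_i}T_i^{-1}$ --- is not transparent: the path sum $\cP(\vec v)$ for a reduced expression of $t_{s_i\omega_i}$ can be large, and nothing in your sketch forces it to collapse to a single correction term. A cleaner way to finish is to note that the desired relation is equivalent to $T_i^{-1}x^{\omega_i}T_i^{-1}=x^{s_i\omega_i}$, and to verify this identity directly from the alcove-walk definition of $x_v$. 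Since $\ell(t_{\omega_i}s_i)=\ell(t_{\omega_i})-1$ one has $x^{\omega_i}T_i^{-1}=T_{t_{\omega_i}s_i}$; then choose the reduced expression $s_i\cdot\vec u$ for $t_{s_i\omega_i}=s_i(t_{\omega_i}s_i)$ with $\vec u$ reduced for $t_{\omega_i}s_i$, and read off the signs of the walk from $c_0$: the first crossing is negative (giving $T_i^{-1}$), and one checks that the remaining crossings --- those separating $s_ic_0$ from $t_{s_i\omega_i}c_0$ --- are all positive, so the tail contributes exactly $T_{t_{\omega_i}s_i}$. This is the ``alcove-walk bookkeeping'' you anticipated, and it is where the well-definedness of $x_v$ (independence of the chosen expression) does the real work.
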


\begin{proof} These facts can be deduced from the alcove walk setup. See~\cite{ramalcove}.
\end{proof}

\begin{remark} The `fraction' appearing in the Bernstein relation is actually an element of $\CC[P]$, because $s_i\mu=\mu-\langle\mu,\alpha_i\rangle\alpha_i^{\vee}$, and $\langle\mu,\alpha_i\rangle\in\ZZ$ since $\mu\in P$.
\end{remark}

\begin{cor}\label{cor:basis2} The sets 
$$
\{x^{\mu}T_{w}\mid \mu\in P,w\in W_0\}\qquad\textrm{and}\qquad\{T_wx^{\mu}\mid\mu\in P,w\in W_0\}
$$
are both bases for $\scH$.
\end{cor}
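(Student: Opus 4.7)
The plan is to deduce both basis claims from the Bernstein presentation (Theorem~\ref{thm:pres}), using the semidirect product bijection $\tilde W\cong P\times W_0$, and then compare with the known basis $\{T_v\mid v\in\tilde W\}$ that is implicit in Corollary~\ref{cor:basis} (via its upper-triangular relation to $\{x_v\}$).

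First I would establish \emph{spanning} by $\{x^\mu T_w\}$. The Bernstein relation
\[
T_i x^\mu = x^{s_i\mu}T_i + (q_i^{1/2}-q_i^{-1/2})\frac{x^\mu-x^{s_i\mu}}{1-x^{-\alpha_i^\vee}}
\]
moves any $x^\mu$ sitting to the right of a $T_i$ one step to the left, at the cost of a term lying purely in $\CC[P]$ (no $T_i$). Iterating this, together with the braid and quadratic relations on the $T_i$, reduces any word in the generators to a finite sum $\sum_{\mu,w}c_{\mu,w}\,x^\mu T_w$ with $\mu\in P$ and $w\in W_0$. The symmetric manipulation, pushing $x^\mu$ past $T_i$ to the right, gives spanning by $\{T_w x^\mu\}$.

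For \emph{linear independence} I would use a triangular comparison with $\{T_v\mid v\in\tilde W\}$. When $\mu\in P^+$ is sufficiently deep in the dominant cone, a direct alcove-walk calculation shows that the straight-out reduced walk from $c_0$ to $t_\mu c_0$ has only positive crossings, so $x^\mu=x_{t_\mu}=T_{t_\mu}$; moreover $\ell(t_\mu w)=\ell(t_\mu)+\ell(w)$ for every $w\in W_0$, hence $x^\mu T_w=T_{t_\mu w}$ exactly. For arbitrary $\mu\in P$, write $\mu=\mu_1-\mu_2$ with $\mu_i\in P^+$, so that $x^\mu=x^{\mu_1}(x^{\mu_2})^{-1}$, and expand using Proposition~\ref{prop:pathformula} and $T_i^{-1}=T_i-(q_i^{1/2}-q_i^{-1/2})$. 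Under the filtration of $\scH$ by the Bruhat order on $\tilde W$, this expansion shows that the leading term of $x^\mu T_w$ is $T_{t_\mu w}$. Since $(\mu,w)\mapsto t_\mu w$ is a bijection $P\times W_0\to\tilde W$, the transition matrix from $\{x^\mu T_w\}$ to $\{T_v\}$ is unitriangular, hence invertible, proving that $\{x^\mu T_w\}$ is a basis. The argument for $\{T_w x^\mu\}$ is the mirror image, using the alternative decomposition $v=w\,t_{w^{-1}\mu}$.

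The main technical obstacle is verifying the triangularity claim for non-dominant $\mu$: one must carefully track which Bruhat-lower terms appear when expanding $T_{t_{\mu_2}}^{-1}$ and composing with $T_{t_{\mu_1}}T_w$, in order to confirm that $T_{t_\mu w}$ really is the unique top Bruhat term. Once this is in place, both basis statements follow immediately.
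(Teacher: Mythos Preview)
Your approach is workable but considerably more laborious than the paper's, and the ``technical obstacle'' you flag is exactly what the paper's argument avoids.

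The paper observes that the basis $\{x_v\mid v\in\tilde W\}$ of Corollary~\ref{cor:basis} \emph{already is} the set $\{x^\mu T_{w^{-1}}^{-1}\mid \mu\in P,\,w\in W_0\}$. Indeed, write $v=t_\mu w$ with $\mu\in P$, $w\in W_0$. Translation invariance of the hyperplane orientation means the portion of the alcove walk from $t_\mu$ to $t_\mu w$ has the same crossing signs as the walk from $1$ to $w$; but the fundamental chamber lies on the positive side of every linear wall, so every step of $1\to w$ is a negative crossing, contributing $T_{i_1}^{-1}\cdots T_{i_\ell}^{-1}=T_{w^{-1}}^{-1}$. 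Hence $x_{t_\mu w}=x^\mu T_{w^{-1}}^{-1}$ exactly, for \emph{all} $\mu\in P$, with no dominance hypothesis needed. From this, $\{x^\mu T_{w^{-1}}^{-1}\}$ is a basis; a finite unitriangular change (via $T_i=T_i^{-1}+\fq$) within $W_0$ converts to $\{x^\mu T_w\}$, and the Bernstein relation then yields $\{T_w x^\mu\}$.

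What this buys over your route: you never need to decompose $\mu=\mu_1-\mu_2$, never need to track Bruhat-leading terms through $T_{t_{\mu_2}}^{-1}$, and the linear independence is immediate from Corollary~\ref{cor:basis} rather than from a filtration argument. Incidentally, in your triangularity claim the leading term of $x^\mu T_w$ in the $T$-basis is $T_{t_\mu w^{-1}}$, not $T_{t_\mu w}$ (though $(\mu,w)\mapsto t_\mu w^{-1}$ is still a bijection, so the argument would survive this correction).
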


\begin{proof}
Since $t_{\mu}$ is in the `$1$-position' of
$t_{\mu}W_0$, and since the orientation on the hyperplanes is translation invariant we have
$$
x_{t_{\mu}w}=x^{\mu}T_{w^{-1}}^{-1}\qquad\textrm{for
all $\mu\in P$ and all $w\in W_0$.}
$$
Therefore $\{x^{\mu}T_{w^{-1}}^{-1}\mid \mu\in P, w\in W_0\}$
is a basis of $\scH$, and the result follows from Corollary~\ref{cor:basis} and the Bernstein relation.
\end{proof}

It is not difficult to use the Bernstein relation to compute the centre of $\scH$. Let $\CC[P]$ denote the $\CC$-span of the elements $x^{\la}$, $\la\in P$. Then $\CC[P]$ carries a natural $W_0$-action (with $w\cdot x^{\la}=x^{w\la}$), and we write
$$
\CC[P]^{W_0}=\{p\in\CC[P]^{W_0}\mid w\cdot p=p\textrm{ for all $w\in W_0$}\}.
$$

\begin{cor}\label{cor:centre} The centre of $\scH$ is $Z(\scH)=\CC[P]^{W_0}$.
\end{cor}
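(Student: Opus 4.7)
The plan is to verify the two inclusions $\CC[P]^{W_0}\subseteq Z(\scH)$ and $Z(\scH)\subseteq \CC[P]^{W_0}$, using the Bernstein presentation as the main tool.

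For the inclusion $\CC[P]^{W_0}\subseteq Z(\scH)$, I first observe that by Corollary~\ref{cor:basis2} the algebra $\scH$ is generated as a $\CC$-algebra by $\CC[P]$ together with the finite Hecke subalgebra generated by $T_1,\ldots,T_n$ (this is enough because the basis $\{x^\mu T_w\mid \mu\in P,w\in W_0\}$ shows $\scH$ is spanned by products of a $\CC[P]$-element and an element of the $\CC$-span of $\{T_w\mid w\in W_0\}$, and the latter algebra is generated by $T_1,\ldots,T_n$). Thus it suffices to check that any $p\in\CC[P]^{W_0}$ commutes with each $x^\mu$ (trivial) and with each $T_i$ for $i=1,\ldots,n$. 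For the latter, writing $p=\sum_\mu c_\mu x^\mu$ and applying the Bernstein relation term by term yields, after collecting,
\[
T_i p - p T_i = (s_i p - p)\!\left(T_i-\frac{q_i^{\frac{1}{2}}-q_i^{-\frac{1}{2}}}{1-x^{-\alpha_i^\vee}}\right),
\]
which vanishes exactly when $s_i p=p$. So $W_0$-invariance gives centrality.

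For the reverse inclusion, let $z\in Z(\scH)$ and expand $z=\sum_{w\in W_0}a_w T_w$ with $a_w\in\CC[P]$ (Corollary~\ref{cor:basis2}). The key step is to show $a_w=0$ whenever $w\neq 1$, so that $z\in\CC[P]$. For this I use $[z,x^\mu]=0$. Iterating the Bernstein relation, one finds
\[
T_w x^\mu = x^{w\mu}T_w + \text{lower-length terms},
\]
where ``lower-length'' means a $\CC[P]$-linear combination of $T_v$ with $\ell(v)<\ell(w)$. Taking $w_\ast$ of maximal length with $a_{w_\ast}\neq 0$ and extracting the $T_{w_\ast}$-coefficient of $[z,x^\mu]=0$ in the basis $\{x^\la T_v\}$ gives $a_{w_\ast}(x^{w_\ast\mu}-x^\mu)=0$ in $\CC[P]$. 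Since $\CC[P]$ is an integral domain and there exists $\mu\in P$ with $w_\ast\mu\neq\mu$ as soon as $w_\ast\neq 1$, we must have $w_\ast=1$, i.e.\ $z\in\CC[P]$. Now commutativity of $z\in\CC[P]$ with $T_i$ is, by the computation above, equivalent to $s_i z=z$, so $z\in\CC[P]^{W_0}$.

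The step I expect to be the most delicate is the extraction-of-leading-term argument in the second inclusion: one needs that multiplying by $x^\mu$ on the right preserves a natural filtration of $\scH$ by $\ell(w)$ on the $T_w$-index (which follows by repeated use of the Bernstein relation, noting that the correction term $(q_i^{\frac12}-q_i^{-\frac12})(x^\mu-x^{s_i\mu})/(1-x^{-\alpha_i^\vee})\in\CC[P]$ introduces no new $T_v$), and one must also verify that for any nontrivial $w_\ast\in W_0$ there is some $\mu\in P$ with $w_\ast\mu\neq\mu$, which holds because $W_0$ acts faithfully on $P$. Everything else is a direct manipulation with the Bernstein presentation.
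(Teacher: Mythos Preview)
Your proof is correct and follows essentially the same route as the paper's: both directions use the Bernstein presentation, and for the reverse inclusion you expand $z$ in the basis $\{x^\mu T_w\}$, use commutation with $\CC[P]$ and a leading-term argument to force $z\in\CC[P]$, then use commutation with $T_i$ to get $W_0$-invariance. The only cosmetic differences are that the paper picks $w_\ast$ maximal in the Bruhat order rather than in length (either works), and phrases the commutation as $x^\la z x^{-\la}=z$ rather than $[z,x^\mu]=0$.
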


\begin{proof}
If $z\in\CC[P]^{W_0}$ then Theorem~\ref{thm:pres} gives 
$T_wz=zT_w$ and $x^{\mu}z=zx^{\mu}$ for all $w\in W_0$ and $\mu\in P$. Therefore $z\in Z(\scH)$. Conversely suppose that $z\in Z(\scH)$. Use Corollary~\ref{cor:basis2} to write
$$
z=\sum_{w\in W_0}p_w(x)T_w\qquad\textrm{where $p_w(x)\in\CC[P]$}.
$$
Let $w$ be a maximal element of $W_0$ (in the Bruhat order) subject to the condition that $p_w(x)\neq 0$. Since $x^{\la}zx^{-\la}=z$ the Bernstein relation gives
$
p_w(x)=x^{\la-w\la}p_w(x)$ for all $\la\in P$, and so $w=1$. Therefore $z\in\CC[P]$. Then for $i=1,\ldots,n$ we have
$$
zT_i=T_iz=(s_iz)T_i+z'\qquad\textrm{for some $z'\in\CC[P]$},
$$
and so $zT_i=(s_iz)T_i$ by Corollary~\ref{cor:basis2}. Thus $z=s_iz$ for each $i$, so $z\in \CC[P]^{W_0}$.
\end{proof}

\subsection{The Macdonald formula}

It is natural to seek modifications $\tau_w$ of the elements $T_w$ which satisfy the ``simplified Bernstein relation''
$$
\tau_wx^{\mu}=x^{w\mu}\tau_w\qquad\textrm{for all $w\in W_0$ and $\mu\in P$}.
$$
For each $i=1,\ldots,n$ define the \textit{intertwiner} $\tau_i\in\scH$ by
$$
\tau_i=(1-x^{-\alpha_i^{\vee}})T_i-(q_i^{\frac{1}{2}}-q_i^{-\frac{1}{2}}).
$$
By Theorem~\ref{thm:pres} we have
$
\tau_ix^{\mu}=x^{s_i\mu}\tau_i
$ for all $\mu\in P$, and a direct computation (using Theorem~\ref{thm:pres}) gives
\begin{align}\label{eq:iii}
\tau_i^2=q_i(1-q_i^{-1}x^{-\alpha_i^{\vee}})(1-q_i^{-1}x^{\alpha_i^{\vee}})\in\CC[P].
\end{align}
It can be shown that 
$$
\tau_w=\tau_{i_1}\cdots\tau_{i_{\ell}}
$$
is independent of the choice of reduced expression $w=s_{i_1}\cdots s_{i_{\ell}}\in W_0$, and that the $\tau_w$ are linearly independent over $\CC[P]$.

Define $\mathbf{1}_0\in\scH$ by
\begin{align}\label{eq:inverses}
    \mathbf{1}_0=\frac{1}{W_0(q)}\sum_{w\in
    W_0}q_w^{\frac{1}{2}}T_w,\quad\textrm{where}\quad W_0(q)=\sum_{w\in W_0}q_w.
\end{align}
Induction on $\ell(w)$ shows that
\begin{align}\label{eq:1}
T_w\mathbf{1}_0=\mathbf{1}_0T_w=q_w^{\frac{1}{2}}\mathbf{1}_0\quad\textrm{for
all $w\in W_0$},\quad\textrm{and so}\quad \mathbf{1}_0^2=\mathbf{1}_0.
\end{align}
Therefore
\begin{align}\label{eq:later}
\mathbf{1}_0\tau_i=q_i^{\frac{1}{2}}\mathbf{1}_0(1-q_i^{-1}x^{\alpha_i^{\vee}})\quad\textrm{and}\quad\tau_i\mathbf{1}_0=-q_i^{-\frac{1}{2}}x^{-\alpha_i^{\vee}}(1-q_i^{-1}x^{\alpha_i^{\vee}})\mathbf{1}_0.
\end{align}

Define elements $d(x),n(x)\in\CC[P]$ by
\begin{align}
\label{eq:DN}d(x)=\prod_{\alpha\in R^+}(1-x^{-\alpha^{\vee}})\qquad\textrm{and}\qquad n(x)=\prod_{\alpha\in R^+}(1-q_{\alpha}^{-1}x^{-\alpha^{\vee}}).
\end{align}

\begin{thm}\label{thm:1_0} We have
$$
d(x)\mathbf{1}_0=\frac{q_{w_0}}{W_0(q)}\sum_{w\in W_0}q_w^{-\frac{1}{2}}c_w\tau_w,\quad\textrm{where}\quad c_w=\prod_{\alpha\in R(w^{-1}w_0)}(1-q_{\alpha}^{-1}x^{-w\alpha^{\vee}}).
$$
In particular, the coefficient of $\tau_e$ in $d(x)\mathbf{1}_0$ is $\frac{q_{w_0}}{W_0(q)}n(x)$.
\end{thm}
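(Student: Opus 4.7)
The strategy is to compute $\tau_w\mathbf{1}_0$ explicitly and reduce the identity to a polynomial identity in $\CC[P]$, which turns out to be a form of Macdonald's ``constant term'' formula.

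First I would establish, by induction on $\ell(w)$, the formula
\[
\tau_w\mathbf{1}_0 = \Big(\prod_{\alpha\in R(w^{-1})}\big(q_\alpha^{-\frac{1}{2}}-q_\alpha^{\frac{1}{2}}x^{-\alpha^\vee}\big)\Big)\mathbf{1}_0.
\]
The base case $w=s_i$ is direct: expanding $\tau_i=(1-x^{-\alpha_i^\vee})T_i-(q_i^{\frac{1}{2}}-q_i^{-\frac{1}{2}})$ and using $T_i\mathbf{1}_0=q_i^{\frac{1}{2}}\mathbf{1}_0$ from~(\ref{eq:1}) yields $\tau_i\mathbf{1}_0=(q_i^{-\frac{1}{2}}-q_i^{\frac{1}{2}}x^{-\alpha_i^\vee})\mathbf{1}_0$. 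For the inductive step, take a reduced expression $w=s_iw'$ with $\ell(w)=\ell(w')+1$, so $\tau_w=\tau_i\tau_{w'}$; apply the inductive hypothesis to $\tau_{w'}\mathbf{1}_0$, commute $\tau_i$ past the resulting $\CC[P]$-coefficient using $\tau_ix^\mu=x^{s_i\mu}\tau_i$ together with $q_{s_i\alpha}=q_\alpha$, and invoke the standard identity $R(w^{-1})=\{\alpha_i\}\cup s_iR((w')^{-1})$.

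Let $F(x)=\sum_{w\in W_0}q_w^{-\frac{1}{2}}c_w\tau_w$, so the claim reads $d(x)\mathbf{1}_0=(q_{w_0}/W_0(q))F(x)$. My plan has two parts: (i) show $F(x)T_i=q_i^{\frac{1}{2}}F(x)$ for each simple reflection, which combined with $\mathbf{1}_0=W_0(q)^{-1}\sum_vq_v^{\frac{1}{2}}T_v$ forces $F(x)=F(x)\mathbf{1}_0\in\CC[P]\mathbf{1}_0$, and (ii) identify the $\CC[P]$-coefficient. For (i), pair $w$ with $ws_i$ in the sum and apply the multiplication rule $\tau_w\tau_i=\tau_{ws_i}$ (when $\ell(ws_i)>\ell(w)$) or $\tau_w\tau_i=q_i(1-q_i^{-1}x^{w\alpha_i^\vee})(1-q_i^{-1}x^{-w\alpha_i^\vee})\tau_{ws_i}$ (otherwise) together with $T_i=(1-x^{-\alpha_i^\vee})^{-1}(\tau_i+q_i^{\frac{1}{2}}-q_i^{-\frac{1}{2}})$; the necessary cancellations are controlled by the relation between $c_w$ and $c_{ws_i}$ arising from the interaction of the inversion sets with $s_i$. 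For (ii), right-multiply by $\mathbf{1}_0$ and apply Step~1: after the substitution $\beta=w\alpha$ (using $q_{w\alpha}=q_\alpha$) to rewrite $c_w=\prod_{\alpha\in R^+\setminus R(w^{-1})}(1-q_\alpha^{-1}x^{-\alpha^\vee})$, and using that $p(x)\mathbf{1}_0=0$ forces $p=0$ (by the $\{x^\mu T_w\}$-basis of Corollary~\ref{cor:basis2}), the identity reduces in $\CC[P]$ to
\[
\sum_{w\in W_0}q_w^{-1}\prod_{\alpha\in R(w^{-1})}(1-q_\alpha x^{-\alpha^\vee})\prod_{\alpha\in R^+\setminus R(w^{-1})}(1-q_\alpha^{-1}x^{-\alpha^\vee})=\frac{W_0(q)}{q_{w_0}}d(x),
\]
which is a form of the Macdonald / Gindikin--Karpelevich identity.

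The ``in particular'' statement then drops out: for $w=e$ we have $c_e=\prod_{\alpha\in R(w_0)}(1-q_\alpha^{-1}x^{-\alpha^\vee})=n(x)$, since $R(w_0)=R^+$. The main obstacle is the polynomial identity in (ii), which I would attempt by induction on $|R^+|$, splitting the sum according to whether a chosen simple root lies in $R(w^{-1})$; this identity is essentially equivalent to the invariance in (i), and both are manifestations of the same classical Macdonald formula.
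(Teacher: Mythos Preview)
Your approach is valid in outline but takes a considerably longer route than the paper's, and it defers the real work to an external identity.

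The paper's proof is a short descending recursion. One simply writes $d(x)\mathbf{1}_0=\sum_{w\in W_0}a_w\tau_w$ with $a_w\in\CC[P]$ (this is automatic since $d(x)T_v$ lies in the $\CC[P]$-span of the $\tau_w$), reads off $a_{w_0}=q_{w_0}^{1/2}/W_0(q)$, and then right-multiplies by $\tau_i$. Using $\mathbf{1}_0\tau_i=q_i^{1/2}\mathbf{1}_0(1-q_i^{-1}x^{\alpha_i^{\vee}})$ on the left and $\tau_i^2\in\CC[P]$ on the right produces the recursion
\[
a_{ws_i}=q_i^{1/2}(1-q_i^{-1}x^{w\alpha_i^{\vee}})\,a_w\qquad\text{whenever }\ell(ws_i)=\ell(w)-1,
\]
and walking down from $w_0$ along any reduced word gives the closed formula. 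No Macdonald-type identity is ever invoked; the theorem is proved from scratch in a few lines.

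By contrast, you write down the right-hand side first and try to verify it. Step~1 (your formula for $\tau_w\mathbf{1}_0$) is correct and is exactly the paper's Lemma~\ref{lem:tinylemma2}, which in the paper is proved \emph{after} Theorem~\ref{thm:1_0} and independently of it. Your part~(i) --- the invariance $F(x)T_i=q_i^{1/2}F(x)$ --- is true (it must be, since $F(x)$ is a multiple of $\mathbf{1}_0$) but you have only sketched the computation; carrying it out via $T_i=(1-x^{-\alpha_i^{\vee}})^{-1}(\tau_i+q_i^{1/2}-q_i^{-1/2})$ and pairing $w$ with $ws_i$ is about as much work as the paper's whole proof. Your part~(ii) then reduces everything to the polynomial identity
\[
\sum_{w\in W_0}q_w^{-1}\prod_{\alpha\in R(w)}(1-q_\alpha x^{-\alpha^{\vee}})\prod_{\alpha\in R^+\setminus R(w)}(1-q_\alpha^{-1}x^{-\alpha^{\vee}})=\frac{W_0(q)}{q_{w_0}}\,d(x),
\]
which you correctly identify as a Macdonald/Gindikin--Karpelevich formula but do not prove. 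So the ``main obstacle'' you name is genuinely the whole content of the theorem from your point of view, whereas the paper's recursion sidesteps it entirely. (A minor point: with the paper's convention $R(w)=\{\alpha\in R^+:w^{-1}\alpha\in -R^+\}$, your products in Step~1 and in~(ii) should run over $R(w)$ rather than $R(w^{-1})$; check your indexing conventions.)
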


\begin{proof} We have
$
d(x)\mathbf{1}_0=\sum_{w\in W_0}a_w\tau_w
$ for some polynomials $a_w\in\CC[P]$ (because each $d(x)T_w$ with $w\in W_0$ has this property). This expression is unique, because the $\tau_w$ are linearly independent over $\CC[P]$, and obviously $a_{w_0}=q_{w_0}^{1/2}W_0(q)^{-1}$. On the one hand using (\ref{eq:later}) we see that for each $i=1,\ldots,n$ we have
$$
d(x)\mathbf{1}_0\tau_i=d(x)\mathbf{1}_0q_i^{\frac{1}{2}}(1-q_i^{-1}x^{\alpha_i^{\vee}})=\sum_{w\in W_0}q_i^{\frac{1}{2}}a_w(1-q_i^{-1}x^{w\alpha_i^{\vee}})\tau_w,
$$
and on the other hand direct computation gives
\begin{align*}
d(x)\mathbf{1}_0\tau_i&=\sum_{w\in W_0}a_w\tau_w\tau_i=\sum_{w:ws_i<w}a_{ws_i}\tau_w+\sum_{w:ws_i>w}a_{ws_i}\tau_w\tau_i^2.
\end{align*}
Since $\tau_i^2\in\CC[P]$ we deduce that
$$
q_i^{\frac{1}{2}}a_w(1-q_i^{-1}x^{w\alpha_i^{\vee}})=a_{ws_i}\qquad\textrm{whenever $\ell(ws_i)=\ell(w)-1$}.
$$
Write $w_0=ws_{i_1}\cdots s_{i_{\ell}}$ with $\ell=\ell(w_0)-\ell(w)$. Then
\begin{align*}
a_w&=q_{i_1}^{\frac{1}{2}}a_{ws_{i_1}}(1-q_{i_1}^{-1}x^{-w\alpha_{i_1}^{\vee}})=\cdots=q_{w^{-1}w_0}^{1/2}a_{w_0}\prod_{\alpha}(1-q_{\alpha}^{-1}x^{-w\alpha^{\vee}}),
\end{align*}
where the product is over $\alpha\in \{\alpha_{i_1},s_{i_1}\alpha_{i_2},\ldots,s_{i_1}\cdots s_{i_{\ell-1}}\alpha_{i_{\ell}}\}=R(w^{-1}w_0)$.
\end{proof}

\begin{lemma}\label{lem:tinylemma2}
Let $\rho=\frac{1}{2}\sum_{\alpha\in R^+}\alpha^{\vee}$. If $w\in W_0$ then
$$
\tau_w\mathbf{1}_0=\bigg[(-1)^{\ell(w)}q_w^{\frac{1}{2}}x^{-\rho+w\rho}\prod_{\beta\in R(w^{-1})}(1-q_{\beta}^{-1}x^{-w\beta^{\vee}})\bigg]\mathbf{1}_0.
$$
\end{lemma}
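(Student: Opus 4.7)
The plan is to prove this by induction on $\ell(w)$. The base case $w=1$ is immediate: $R(w^{-1})=\emptyset$ and $w\rho=\rho$, so both sides reduce to $\mathbf{1}_0$.

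For the inductive step, I would pick a left descent $s_j$ of $w$, so that $w=s_jw''$ with $\ell(w'')=\ell(w)-1$ and $\tau_w=\tau_j\tau_{w''}$. A direct calculation using the definition of $\tau_j$ together with $T_j\mathbf{1}_0=q_j^{1/2}\mathbf{1}_0$ from (\ref{eq:1}) yields
\begin{align*}
\tau_j\mathbf{1}_0
=(q_j^{-1/2}-q_j^{1/2}x^{-\alpha_j^\vee})\mathbf{1}_0
=-q_j^{1/2}x^{-\alpha_j^\vee}(1-q_j^{-1}x^{\alpha_j^\vee})\mathbf{1}_0.
\end{align*}
Next I would apply the inductive hypothesis to $\tau_{w''}\mathbf{1}_0$ and commute $\tau_j$ past the resulting $\CC[P]$-coefficient via the intertwining relation $\tau_j x^\mu=x^{s_j\mu}\tau_j$ (noted just before (\ref{eq:iii})). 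This converts every exponent $-w''\beta^\vee$ in the product into $-w\beta^\vee$ (since $s_jw''=w$), and turns the global exponent $-\rho+w''\rho$ into $-s_j\rho+w\rho$. Finally I would substitute the formula for $\tau_j\mathbf{1}_0$ computed above, which puts an extra factor $-q_j^{1/2}x^{-\alpha_j^\vee}(1-q_j^{-1}x^{\alpha_j^\vee})$ on the left of everything.

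Assembling the pieces: the signs and $q$-powers combine via $q_w=q_{w''}q_j$ and $(-1)^{\ell(w'')+1}=(-1)^{\ell(w)}$ to give the desired prefactor $(-1)^{\ell(w)}q_w^{1/2}$; the exponents $-s_j\rho$ and $-\alpha_j^\vee$ merge to $-\rho$ via $s_j\rho=\rho-\alpha_j^\vee$, producing the required $x^{-\rho+w\rho}$. The leftover factor $(1-q_j^{-1}x^{\alpha_j^\vee})$ is then recognised as the missing term in the enlarged product over $R(w^{-1})$: the standard reduced-expression formula for inversion sets gives $R(w^{-1})=R((w'')^{-1})\sqcup\{(w'')^{-1}\alpha_j\}$, and for $\beta_0:=(w'')^{-1}\alpha_j$ one has $q_{\beta_0}=q_j$ (since $\beta_0\in W_0\alpha_j$) together with $-w\beta_0^\vee=-s_jw''(w'')^{-1}\alpha_j^\vee=-s_j\alpha_j^\vee=\alpha_j^\vee$, matching the new factor on the nose. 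The main obstacle I anticipate is exactly this final bookkeeping—in particular the coroot identity $-w(w'')^{-1}\alpha_j^\vee=\alpha_j^\vee$ that causes the lone factor from $\tau_j\mathbf{1}_0$ to slot correctly into the product over $R(w^{-1})$; once it is verified the rest of the argument is a routine assembly.
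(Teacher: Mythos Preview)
Your proof is correct and follows essentially the same approach as the paper. The paper applies the formula for $\tau_i\mathbf{1}_0$ iteratively along a fixed reduced expression $w=s_{i_1}\cdots s_{i_\ell}$ to obtain a product over $R(w)$ and then reindexes via $R(w)=-wR(w^{-1})$, whereas you package the same computation as an induction on $\ell(w)$ using a left descent; the underlying manipulations (the commutation $\tau_j x^\mu=x^{s_j\mu}\tau_j$, the identity $s_j\rho=\rho-\alpha_j^\vee$, and the inversion-set bookkeeping) are identical.
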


\begin{proof} It follows from (\ref{eq:later}) that if $w=s_{i_1}\cdots s_{i_{\ell}}$ is reduced then
$$
\tau_w\mathbf{1}_0=\bigg(\prod_{\alpha}\left[-q_{\alpha}^{\frac{1}{2}}x^{-\alpha^{\vee}}(1-q_{\alpha}^{-1}x^{\alpha^{\vee}})\right]\bigg)\mathbf{1}_0,
$$
where the product is over $\alpha\in\{\alpha_{i_1},s_{i_1}\alpha_{i_2},\ldots,s_{i_1}\cdots s_{i_{\ell-1}}\alpha_{i_{\ell}}\}=R(w)$. Since $R(w)=-wR(w^{-1})$ it follows that
$$
\tau_w\mathbf{1}_0=\bigg[(-1)^{\ell(w)}q_w^{\frac{1}{2}}x^{\sum_{\beta\in R(w^{-1})}w\beta^{\vee}}\prod_{\beta\in R(w^{-1})}(1-q_{\beta}^{-1}x^{-w\beta^{\vee}})\bigg]\mathbf{1}_0,
$$
and the result follows since $w\rho-\rho=\sum_{\beta\in R(w^{-1})}w\beta^{\vee}$.
\end{proof}

\begin{thm}\label{thm:macdonald}
For all $\mu\in P$ we have the Macdonald formula
$$
\mathbf{1}_0x^{\mu}\mathbf{1}_0=P_{\mu}(x)\mathbf{1}_0\quad\textrm{where}\quad P_{\mu}(x)=\frac{q_{w_0}}{W_0(q)}\sum_{w\in W_0}w\bigg(x^{\mu}\prod_{\alpha\in R^+}\frac{1-q_{\alpha}^{-1}x^{-\alpha^{\vee}}}{1-x^{-\alpha^{\vee}}}\bigg).
$$
\end{thm}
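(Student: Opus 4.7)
The plan is to derive the Macdonald formula by multiplying the identity of Theorem~\ref{thm:1_0} for $d(x)\mathbf{1}_0$ on the right by $x^{\mu}\mathbf{1}_0$, pushing the $x^{\mu}$ across the intertwiners, and then evaluating using Lemma~\ref{lem:tinylemma2}. Concretely, the intertwining relation $\tau_wx^{\mu}=x^{w\mu}\tau_w$ combined with Lemma~\ref{lem:tinylemma2} gives
\begin{align*}
d(x)\mathbf{1}_0x^{\mu}\mathbf{1}_0=\frac{q_{w_0}}{W_0(q)}\sum_{w\in W_0}(-1)^{\ell(w)}c_w\,x^{w\mu-\rho+w\rho}\prod_{\beta\in R(w^{-1})}\!\!(1-q_{\beta}^{-1}x^{-w\beta^{\vee}})\mathbf{1}_0,
\end{align*}
where the factors of $q_w^{\pm 1/2}$ from the two inputs cancel exactly.

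The next step is the combinatorial observation $R^+=R(w^{-1})\sqcup R(w^{-1}w_0)$, proved by noting that $R(w^{-1}w_0)=\{\alpha\in R^+:w\alpha\in R^+\}$ (using $w_0R^+=-R^+$) while $R(w^{-1})=\{\alpha\in R^+:w\alpha\in -R^+\}$. This lets us merge $c_w$ with the product coming from Lemma~\ref{lem:tinylemma2} into the single product $\prod_{\alpha\in R^+}(1-q_{\alpha}^{-1}x^{-w\alpha^{\vee}})$. Using the same bijection $\gamma\mapsto -w\gamma:R(w^{-1})\to R(w)$, one derives $\sum_{\gamma\in R(w)}\gamma^{\vee}=\rho-w\rho$, and hence
\begin{align*}
w(d(x))=\prod_{\alpha\in R^+}(1-x^{-w\alpha^{\vee}})=(-1)^{\ell(w)}x^{\rho-w\rho}d(x).
\end{align*}
Substituting $d(x)/w(d(x))=(-1)^{\ell(w)}x^{w\rho-\rho}$ back in converts the sum into
\begin{align*}
d(x)\mathbf{1}_0x^{\mu}\mathbf{1}_0=d(x)\cdot\frac{q_{w_0}}{W_0(q)}\sum_{w\in W_0}w\bigg(x^{\mu}\prod_{\alpha\in R^+}\frac{1-q_{\alpha}^{-1}x^{-\alpha^{\vee}}}{1-x^{-\alpha^{\vee}}}\bigg)\mathbf{1}_0=d(x)P_{\mu}(x)\mathbf{1}_0.
\end{align*}

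The final step is cancellation of $d(x)$. By Corollary~\ref{cor:basis2}, $\scH$ is a free left $\CC[P]$-module (with basis $\{T_wx^{\nu}\}$ or equivalently $\{x^{\nu}T_w\}$), so left multiplication by the nonzero element $d(x)\in\CC[P]$ is injective. This yields $\mathbf{1}_0x^{\mu}\mathbf{1}_0=P_{\mu}(x)\mathbf{1}_0$, as desired. I expect the main obstacle to be the bookkeeping in the intermediate step: one must correctly identify the pairs of root subsets $R(w^{-1}),R(w^{-1}w_0)$ and track the sign/translation twist $(-1)^{\ell(w)}x^{-\rho+w\rho}$ coming from Lemma~\ref{lem:tinylemma2} so that it combines cleanly with $d(x)/w(d(x))$ to produce the symmetrised form $\sum_w w(\cdot)$ in the definition of $P_{\mu}(x)$.
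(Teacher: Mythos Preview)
Your approach is essentially the same as the paper's: both multiply the expansion of $d(x)\mathbf{1}_0$ from Theorem~\ref{thm:1_0} by $x^{\mu}\mathbf{1}_0$, push $x^{\mu}$ past $\tau_w$, and apply Lemma~\ref{lem:tinylemma2}, arriving at
\[
d(x)\mathbf{1}_0x^{\mu}\mathbf{1}_0=\frac{q_{w_0}}{W_0(q)}\,x^{-\rho}\sum_{w\in W_0}(-1)^{\ell(w)}(wn(x))\,x^{w\mu+w\rho}\,\mathbf{1}_0.
\]
Your merging of $c_w$ with the product from Lemma~\ref{lem:tinylemma2} via $R^+=R(w^{-1})\sqcup R(w^{-1}w_0)$ is exactly what produces the paper's factor $wn(x)$. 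The only difference is in the division step: the paper quotes Bourbaki (VI, \S3, No.~3, Proposition~2) to assert that the alternating sum is divisible by $x^{\rho}d(x)$ in $\CC[P]$, whereas you substitute $(-1)^{\ell(w)}x^{w\rho-\rho}=d(x)/w(d(x))$ and then cancel $d(x)$.

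There is one point you should tighten. When you write $d(x)\mathbf{1}_0x^{\mu}\mathbf{1}_0=d(x)P_{\mu}(x)\mathbf{1}_0$ and invoke injectivity of left multiplication by $d(x)$ on $\scH$, you are implicitly assuming $P_{\mu}(x)\mathbf{1}_0\in\scH$, i.e.\ that $P_{\mu}(x)\in\CC[P]$ rather than merely a rational function. This is precisely what the Bourbaki citation supplies in the paper. Your free-module argument actually delivers this too, but you should say so: write $\mathbf{1}_0x^{\mu}\mathbf{1}_0=\sum_{w\in W_0}p_w(x)T_w$ in the basis of Corollary~\ref{cor:basis2}; comparing with $g(x)\mathbf{1}_0$ (where $g=d(x)P_{\mu}(x)\in\CC[P]$) forces $d(x)p_w(x)=q_w^{1/2}W_0(q)^{-1}g(x)$, hence $P_{\mu}(x)=g(x)/d(x)\in\CC[P]$. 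With that addition your proof is complete and, pleasantly, self-contained without the external reference.
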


\begin{proof}
By Theorem~\ref{thm:1_0} and Lemma~\ref{lem:tinylemma2} we have
\begin{align*}
d(x)\mathbf{1}_0x^{\mu}\mathbf{1}_0&=\frac{q_{w_0}}{W_0(q)}\sum_{w\in W_0}q_w^{-\frac{1}{2}}c_wx^{w\mu}\tau_w\mathbf{1}_0\\
&=\frac{q_{w_0}}{W_0(q)}x^{-\rho}\sum_{w\in W_0}(-1)^{\ell(w)}(wn(x))x^{w\mu+w\rho}\mathbf{1}_0.
\end{align*}
By Bourbaki \cite[VI, \S3, No.3, Proposition~2]{bourbaki} the polynomial
$$
p(x)=\sum_{w\in W_0}(-1)^{\ell(w)}(wn(x))x^{w\mu+w\rho}\quad\textrm{is divisible by}\quad x^{\rho}d(x),
$$
and since $w(x^{\rho}d(x))=(-1)^{\ell(w)}x^{\rho}d(x)$ we have 
$$
\frac{p(x)}{x^{\rho}d(x)}=\sum_{w\in W_0}x^{w\mu}\prod_{\alpha\in R^+}\frac{1-q_{\alpha}^{-1}x^{-w\alpha^{\vee}}}{1-x^{-w\alpha^{\vee}}},
$$
completing the proof.
\end{proof}

\begin{remark} The above computation can be used to prove the \textit{Satake isomorphism} 
$$\mathbf{1}_0\scH\mathbf{1}_0\cong Z(\scH)=\CC[P]^{W_0},
$$
because $\{\mathbf{1}_0x^{\la}\mathbf{1}_0\mid \la\in P^+\}$ is a basis for $\mathbf{1}_0\scH\mathbf{1}_0$ and $\{P_{\la}(x)\mid\la\in P^+\}$ is a basis for $\CC[P]^{W_0}$.
\end{remark}

\subsection{Some representation theory} 

The calculation of the centre of $\scH$ has important implications for the representation theory of $\scH$.

\begin{prop}\label{prop:fundamental} Let $(\pi,V)$ be an irreducible representation of $\scH$ over $\CC$.
\begin{enumerate}
\item There is an element $t\in \Hom(P,\CC^{\times})$ such that
$$
\pi(z)=h_t(z)I\qquad\textrm{for all $z\in\CC[P]^{W_0}$},
$$
where $h_t:\CC[P]\to\CC$ is the evaluation homomorphism given by
$$
h_t(x^{\la})=t^{\la}\qquad\textrm{for all $\la\in P$, where $t^{\la}:=t(\la)$}.
$$
\item We have $h_t(z)=h_{t'}(z)$ for all $z\in \CC[P]^{W_0}$ if and only if $t'\in W_0t$. 
\item $V$ is necessarily finite dimensional.
\end{enumerate}
\end{prop}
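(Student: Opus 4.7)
The plan is to handle the three parts in order, leveraging Corollary~\ref{cor:centre} ($Z(\scH) = \CC[P]^{W_0}$) throughout.

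For part (1), I would first observe that $V$ has at most countable dimension over $\CC$: any nonzero $v_0 \in V$ generates $V$ by irreducibility, and $\scH$ itself has countable basis $\{T_w \mid w \in \tilde W\}$. Dixmier's version of Schur's lemma then yields $\End_{\scH}(V) = \CC$, so every central element acts as a scalar: $\pi(z) = \chi(z) I$ for some algebra homomorphism $\chi : Z(\scH) \to \CC$. To identify $\chi$ with an $h_t$, I would use integrality of $\CC[P]^{W_0} \hookrightarrow \CC[P]$: each $x^{\la}$ is a root of the monic polynomial $\prod_{w \in W_0}(y - x^{w\la})$ with $W_0$-invariant coefficients. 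Going-up applied to the maximal ideal $\ker\chi$ produces a maximal ideal $\fM \subset \CC[P]$ contracting to it; the residue field $\CC[P]/\fM$ is then a finitely generated algebraic extension of $\CC$, hence equals $\CC$. The induced character of $\CC[P]$ is evaluation $h_t$ at some $t \in \Hom(P, \CC^\times)$, proving (1).

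For part (2), the implication $t' \in W_0 t \Rightarrow h_t|_{Z(\scH)} = h_{t'}|_{Z(\scH)}$ is immediate from $W_0$-invariance. Conversely, I would invoke the classical fact that for a finite group $G$ acting on a commutative $\CC$-algebra $A$, the natural map $\mathrm{MaxSpec}(A)/G \to \mathrm{MaxSpec}(A^G)$ is a bijection. Applied to $W_0$ acting on $\CC[P]$, this is exactly the assertion. An explicit proof uses the same polynomial $\prod_{w \in W_0}(y - x^{w\la})$: applying $h_t$ and $h_{t'}$ to its coefficients yields equal polynomials in $\CC[y]$, so $\{t^{w\la}\}_{w \in W_0}$ and $\{(t')^{w\la}\}_{w \in W_0}$ coincide as multisets for every $\la \in P$, which (by choosing a regular $\la$ and then matching the values on generators $\omega_1,\dots,\omega_n$ of $P$) forces $t' \in W_0 t$.

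Part (3) is almost formal once (1) is in hand. Let $\fm = \ker h_t|_{Z(\scH)}$; since $\pi(\fm) = 0$, the representation factors through the quotient algebra $\scH/\fm\scH$. By Corollary~\ref{cor:basis2}, $\scH$ is a free $\CC[P]$-module of rank $|W_0|$, and $\CC[P]$ is a finitely generated module over $Z(\scH) = \CC[P]^{W_0}$ (by integrality combined with finite generation as a $\CC$-algebra). Hence $\scH$ is a finitely generated $Z(\scH)$-module, so $\scH/\fm\scH$ is a finite-dimensional $\CC$-algebra; irreducible modules over a finite-dimensional algebra are finite-dimensional, so $\dim V < \infty$.

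The main obstacle I anticipate is the small separation argument in part (2), promoting equality of multisets $\{t^{w\la}\}$ and $\{(t')^{w\la}\}$ across all $\la$ to equality of the pairs $(t,t')$ as points on the torus $\Hom(P,\CC^\times)$; once one accepts the standard invariant-theoretic bijection this is just a short matching argument. The remaining steps are clean applications of the centre calculation combined with either Dixmier's lemma (for (1)) or finite generation of $\scH$ over $Z(\scH)$ (for (3)).
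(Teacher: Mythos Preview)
Your proposal is correct and follows essentially the same route as the paper: Dixmier's Schur lemma (via countable dimension of $\scH$) for (1), the classical bijection $\mathrm{MaxSpec}(\CC[P])/W_0 \cong \mathrm{MaxSpec}(\CC[P]^{W_0})$ for (2) (the paper simply cites Atiyah--Macdonald, Chapter~V, Exercises~12--13), and finite generation of $\scH$ over its centre via integrality of $\CC[P]$ over $\CC[P]^{W_0}$ for (3). Your writeup is in fact slightly more detailed than the paper's, which is terse on all three points.
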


\begin{proof} 1. The algebra $\scH$ has countable dimension. Therefore by Dixmier's infinite dimensional generalisation of Schur's Lemma (see \cite[\S5.3, Lemma~9]{varadarajan}), if $(\pi,V)$ is an irreducible representation of $\scH$ then $Z(\scH)=\CC[P]^{W_0}$ acts on $V$ by scalars. Thus $\pi:\scH\to GL(V)$ induces an algebra homomorphism $h:\CC[P]^{W_0}\to\CC$ by $\pi(z)=h(z)I$ for all $z\in\CC[P]^{W_0}$. Since $\CC[P]$ is integral over $\CC[P]^{W_0}$ each algebra homomorphism $h:\CC[P]^{W_0}\to\CC$ is the restriction of some homomorphism $\CC[P]\to\CC$. Therefore $h=h_t$ for some $t\in\Hom(P,\CC^{\times})$.

2. Exercises 12 and 13 in \cite[Chapter~V]{atiyah} show that the homomorphisms $h_t$ and $h_{t'}$ agree on $\CC[P]^{W_0}$ if and only if $t'\in W_0t$.

3. Since $\CC[P]$ is integral over $\CC[P]^{W_0}$, and since $\{T_wx^{\la}\mid w\in W_0,\la\in P\}$ is a vector space basis of $\scH$, it follows that $\scH$ is finite dimensional as a $\CC[P]^{W_0}$-module, and hence $V$ is finite dimensional (by part 1.).
\end{proof}

\begin{remark} It can be shown that if $(\pi,V)$ is an irreducible representation of $\scH$ then $\dim(V)\leq |W_0|$ (see \cite{kato}).
\end{remark}

\begin{defn} The element $t\in \Hom(P,\CC^{\times})$ in Proposition~\ref{prop:fundamental} is the \textit{central character} of $(\pi,V)$. To be more precise, the central character is the orbit $W_0t$.
\end{defn}

Note that $\scH=\scH_0\otimes\CC[P]$ where $\scH_0$ is the $|W_0|$-dimensional subalgebra generated by $T_w$, $w\in W_0$. This allows us to write down finite dimensional representations of $\scH$ by \textit{inducing} representations of the commutative subalgebra $\CC[P]$ to $\scH$. For $t\in\Hom(P,\CC^{\times})$ let $\CC v_t$ be the one dimensional representation of $\CC[P]$ with action $x^{\la}\cdot v_t=t^{\la}v_t$.

\begin{defn}
Let $t\in\Hom(P,\CC^{\times})$. The \textit{principal series representation of $\scH$ with central character $t$} is $(\pi_t,V(t))$, where 
$$
V(t)=\mathrm{Ind}_{\CC[P]}^{\scH}(\CC v_t)=\scH\otimes_{\CC[P]} (\CC v_t).
$$
We have $h\cdot(h'\otimes v_t)=(hh'\otimes v_t)$ and $(x^{\la}\otimes v_t)=t^{\la}(1\otimes v_t)$. Therefore $V(t)$ has basis $\{(T_w\otimes v_t)\mid w\in W_0\}$, and hence has dimension $|W_0|$.
\end{defn}

The importance of these representations is given by Kato's Theorem:

\begin{thm}[see \cite{kato}] We have
\begin{enumerate}
\item $(\pi_t,V(t))$ is irreducible if and only if $t^{\alpha^{\vee}}\neq q^{\pm1}$ for each $\alpha\in R$.
\item If $(\pi,V)$ is an irreducible representation of $\scH$ with central character $t$ then $(\pi,V)$ is a composition factor of $(\pi_t,V(t))$. 
\end{enumerate}
\end{thm}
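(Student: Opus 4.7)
The central tool is the intertwiner elements $\tau_w \in \scH$ and their action on the cyclic vector $v_t := 1 \otimes v_t \in V(t)$. First I would establish the identity
$$
\tau_w\tau_{w^{-1}} = \prod_{\alpha \in R(w)} q_\alpha\bigl(1 - q_\alpha^{-1}x^{-\alpha^\vee}\bigr)\bigl(1 - q_\alpha^{-1}x^{\alpha^\vee}\bigr)
$$
by induction on $\ell(w)$, using~(\ref{eq:iii}) and the intertwining property $\tau_i x^\mu = x^{s_i\mu}\tau_i$ together with $R(s_iw') = \{\alpha_i\} \cup s_iR(w')$ when $\ell(s_iw') = \ell(w')+1$. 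Applied to $v_t$, this yields $\tau_w\tau_{w^{-1}}v_t = N_w(t)\, v_t$ where $N_w(t)$ is the corresponding product with $x^{\pm\alpha^\vee}$ replaced by $t^{\pm\alpha^\vee}$. The intertwining relation also shows that whenever $\tau_w v_t \neq 0$, it is a $\CC[P]$-eigenvector with character $wt$: $x^\la \cdot \tau_w v_t = (wt)^\la \tau_w v_t$.

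For Part~(1), sufficiency: if $t^{\alpha^\vee} \neq q_\alpha^{\pm 1}$ for every $\alpha \in R$, then $N_w(t) \neq 0$ for all $w \in W_0$, so each $\tau_w v_t$ is nonzero. A dimension count (using that the characters of $\CC[P]$ on $V(t)$ form the multiset $\{wt\}_{w \in W_0}$, as follows from the Bernstein relation applied to the basis $\{T_w \otimes v_t\}$) shows that $\{\tau_w v_t : w \in W_0\}$ spans $V(t)$. Any $\scH$-submodule is $\CC[P]$-stable, hence a sum of joint (generalised) eigenlines; and the $\tau_i$ action on these lines is by nonzero scalars (directly $\tau_{s_iw}v_t$ when $\ell(s_iw)>\ell(w)$, and via the $\tau_i^2$ formula otherwise, with coefficient a nonzero evaluation of $N_{s_i}$ at $wt$). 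The $\tau_i$'s therefore permute the eigenlines transitively, so the only nonzero submodule is $V(t)$. For necessity: if $t^{\alpha^\vee} = q_\alpha^{\pm 1}$ for some $\alpha \in R^+$, choose $w \in W_0$ with $\alpha \in R(w)$; then $N_w(t)=0$, and either $\tau_{w^{-1}}v_t = 0$ or $\tau_{w^{-1}}v_t \neq 0$ is annihilated by $\tau_w$. In either case one locates a proper nonzero $\scH$-submodule by tracking $\CC[P]$-eigenvalues, thereby proving reducibility.

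Part~(2) is a Frobenius-reciprocity argument. Since $V$ is finite-dimensional by Proposition~\ref{prop:fundamental}(3), the commutative algebra $\CC[P]$ has a nonzero common eigenvector $v \in V$ with character $t' \in \Hom(P,\CC^\times)$. Because $\CC[P]^{W_0}$ acts on $V$ via $h_t$ and on $v$ via $h_{t'}$, Proposition~\ref{prop:fundamental}(2) yields $t' \in W_0 t$. Frobenius reciprocity for $\CC[P] \hookrightarrow \scH$ then gives
$$
\Hom_\scH(V(t'),V) \cong \Hom_{\CC[P]}(\CC v_{t'}, V) \supseteq \CC v \neq 0,
$$
and any nonzero map is surjective by irreducibility of $V$, so $V$ is a composition factor of $V(t')$. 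Finally, $V(t)$ and $V(t')$ have the same composition factors (for any $t' \in W_0 t$), because the intertwiners $\tau_w$, regularised across the zero loci of $N_w$, induce equalities in the Grothendieck group of finite-dimensional $\scH$-modules.

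The main obstacle is the necessity direction of Part~(1): explicitly exhibiting a proper submodule at the reducibility loci $t^{\alpha^\vee} = q_\alpha^{\pm 1}$ requires a careful case analysis of which vectors $\tau_u v_t$ degenerate and how the cyclic submodules they generate fit together, which is the most intricate part of Kato's argument in~\cite{kato}. The same regularisation of intertwiners across these resonance loci is needed to close Part~(2) by identifying $V(t)$ with $V(wt)$ at the level of composition factors.
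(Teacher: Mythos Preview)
The paper does not supply its own proof of this theorem; it is simply attributed to Kato with the citation~\cite{kato}. So there is nothing to compare your approach against, and your outline is indeed the standard intertwiner-based route that Kato and others take.

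That said, there is a genuine gap in your sufficiency argument for Part~(1). You claim that when $t^{\alpha^\vee}\neq q_\alpha^{\pm1}$ for all $\alpha$, the set $\{\tau_w v_t : w\in W_0\}$ spans $V(t)$ by a ``dimension count''. This is fine when $t$ is \emph{regular} (all $wt$ distinct), since then the $\tau_w v_t$ are nonzero eigenvectors with pairwise distinct $\CC[P]$-characters and hence linearly independent. But the hypothesis $t^{\alpha^\vee}\neq q_\alpha^{\pm1}$ does not exclude $t^{\alpha^\vee}=1$. If, say, $t^{\alpha_i^\vee}=1$, then from $\tau_i=(1-x^{-\alpha_i^\vee})T_i-(q_i^{1/2}-q_i^{-1/2})$ one computes
\[
\tau_i v_t=(1-t^{-\alpha_i^\vee})(T_i\otimes v_t)-(q_i^{1/2}-q_i^{-1/2})v_t=-(q_i^{1/2}-q_i^{-1/2})\,v_t,
\]
so $\tau_{s_i}v_t$ is a nonzero \emph{scalar multiple} of $v_t$. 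Your nonvanishing criterion $N_w(t)\neq0$ is satisfied (indeed $N_{s_i}(t)=q_i^{-1}(q_i-1)^2\neq0$), yet $\{\tau_w v_t\}$ collapses and cannot span the $|W_0|$-dimensional space $V(t)$. Correspondingly, the generalised $t$-weight space in $V(t)$ has dimension $|\mathrm{Stab}_{W_0}(t)|>1$ but the genuine $t$-eigenspace may be smaller, so your ``eigenline'' bookkeeping and transitivity claim also break down. Handling the singular case is precisely where Kato's argument requires more care; you have identified the necessity direction as the main obstacle, but the sufficiency direction in the singular locus is equally delicate. Your Part~(2) sketch via Frobenius reciprocity is correct, with the final identification of composition factors of $V(t)$ and $V(wt)$ being a standard fact (same character, or deformation argument as you indicate).
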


See Section~\ref{sect:8} for explicit computations of representations in type $\tilde{A}_2$. Another way of building representations is to induce representations from \textit{parabolic subalgebras}. Agin, see Section~\ref{sect:8} for some examples.

We now give a simple combinatorial formula for the matrix elements of the principal series representation. Recall the definitions of $\wt(p)\in P$ and $\theta(p)\in W_0$ from~(\ref{eq:finaleq}). It is convenient to generalise the definition of $\cP(\vec w)$ from (\ref{eq:cP}) to allow alcove walks that start at an alcove different from~$1$. Given $u\in W_0$ and $\vec w\in\tilde{W}$ a reduced expression, let
$$
\cP(\vec w,u)=\{\textrm{positively alcove walks of type $\vec w$ starting at the alcove $u$}\}.
$$

\begin{thm}\label{thm:combform} Let $w\in\tilde{W}$. Relative to the basis $\{(T_u^{-1}T_{w_0}\otimes v_t)\mid u\in W_0\}$, the matrix elements of the principal series representation $(\pi_t,V(t))$ are given by
$$
[\pi_t(T_{w^{-1}})]_{v,u}=\sum_{\{p\in\cP(\vec{w},u)\mid \theta(p)=v\}}\cQ(p)t^{-w_0(\wt(p))}
$$
where $\cQ(p)$ is as in Proposition~\ref{prop:pathformula}.
\end{thm}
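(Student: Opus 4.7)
The plan is to prove the formula by induction on $\ell(w)$. Fix a reduced expression $\vec{w} = s_{i_1}\cdots s_{i_\ell}\gamma$ and set $\vec{w'} = s_{i_1}\cdots s_{i_{\ell-1}}\gamma$ with $s_j = \gamma^{-1}s_{i_\ell}\gamma$; then $w = w's_j$ with $\ell(w) = \ell(w')+1$, and hence $T_{w^{-1}} = T_jT_{(w')^{-1}}$. The base case $\ell(w)=0$ (so $w = \gamma \in \Gamma$) is immediate: $T_{\gamma^{-1}}\psi_u$ is a single ``jump'', and $\cP(\gamma,u)$ contains a single fold-free path with the matching weight and final direction. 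For the inductive step, apply the hypothesis to $T_{(w')^{-1}}\psi_u$ and then compute $\pi_t(T_j)\psi_v$ for each $v = \theta(p) \in W_0$, matching this against the legal ways of extending $p \in \cP(\vec{w'},u)$ by one step of type $s_j$.

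When $j \in \{1,\ldots,n\}$, one uses the Hecke multiplication rule applied to $T_jT_{v^{-1}w_0}$: its length increases or decreases according to whether $\ell(vs_j) = \ell(v)-1$ or $\ell(v)+1$, producing respectively $\psi_{vs_j}$ or $\psi_{vs_j} + (q_j^{1/2}-q_j^{-1/2})\psi_v$. On the alcove-walk side, the orientation rule of \S4 shows that the end alcove $t_{\wt(p)}vc_0$ lies on the negative side of the $j$-th wall precisely when $v\alpha_j > 0$ (equivalently $\ell(vs_j) = \ell(v)+1$), so the two sub-cases of the Hecke rule correspond respectively to a single positive crossing or to a negative crossing together with a positive fold contributing $q_j^{1/2}-q_j^{-1/2}$. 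Since $s_j \in W_0$ does not translate, neither extension shifts $\wt(p)$ and the prefactor $t^{-w_0\wt(p)}$ is unchanged, so the match is exact.

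The main obstacle is the $j=0$ case, since the Bernstein relation in Theorem~\ref{thm:pres} is only stated for $j \geq 1$. To handle this, first apply Proposition~\ref{prop:pathformula} to the length-one element $s_0$: because $c_0$ lies on the negative side of $H_{\alpha_0}$, the sole path of type $s_0$ starting at $1$ is a positive crossing, so $T_0 = x_{s_0}$, which by the identity $x_{t_\mu w} = x^\mu T_{w^{-1}}^{-1}$ combined with $s_0 = t_{\varphi^\vee}s_\varphi$ equals $x^{\varphi^\vee}T_{s_\varphi}^{-1}$. Substitute this into $T_0T_{v^{-1}w_0} \otimes v_t$ and commute $x^{\varphi^\vee}$ to the right of $T_{s_\varphi}^{-1}T_{v^{-1}w_0}$ via repeated applications of the Bernstein relation; this produces a principal contribution $t^{-w_0(v\varphi^\vee)}\psi_{vs_\varphi}$ (the appearance of $-w_0$ reflects the reindexing $u \mapsto u^{-1}w_0$, which conjugates the weight-action by $w_0$), and in addition a residual term $(q_0^{1/2}-q_0^{-1/2})\psi_v$ that survives precisely when $v\varphi < 0$. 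This matches the alcove-walk count: $v\varphi > 0$ places $t_{\wt(p)}vc_0$ on the negative side of the $0$-wall forcing a single positive crossing (with weight shift $v\varphi^\vee$), while $v\varphi < 0$ admits both a negative crossing and a positive fold.

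Summing the matched contributions over $p \in \cP(\vec{w'},u)$ reassembles the sum over extended paths $p' \in \cP(\vec{w},u)$, with $\cQ(p') = \cQ(p)$ or $\cQ(p)(q_j^{1/2}-q_j^{-1/2})$ according to whether the new step is a crossing or a fold, and $\wt(p') - \wt(p) \in \{0, v\varphi^\vee\}$ only when $j=0$. This closes the induction and establishes the stated formula.
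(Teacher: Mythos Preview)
Your inductive approach is viable and genuinely different from the paper's, but the $j=0$ step is not adequately justified. You assert that commuting $x^{\varphi^\vee}$ past $T_{s_\varphi}^{-1}T_{v^{-1}w_0}$ ``via repeated applications of the Bernstein relation'' collapses to exactly $t^{-w_0(v\varphi^\vee)}\psi_{vs_\varphi}$ plus (when $v\varphi<0$) a single residual $(q_0^{1/2}-q_0^{-1/2})\psi_v$. This is true---it is essentially Macdonald's formula \cite[(3.3.6)]{macblue}, which the paper itself invokes in Section~\ref{sect:8}---but it is not a consequence of a naive commutation: $s_\varphi$ has large length in $W_0$, the product $T_{s_\varphi}^{-1}T_{v^{-1}w_0}$ is in general a nontrivial linear combination of $T_u$'s, and pushing $x^{\varphi^\vee}$ through via Bernstein produces many cross-terms whose cancellation down to at most two requires a separate argument. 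There is also a harmless slip in your $j\geq1$ paragraph: when $v\alpha_j>0$ the end alcove lies on the \emph{positive} side of its $j$-wall, not the negative side; your case-matching is nonetheless correct.

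The paper's proof avoids both the induction and the $j=0$ case analysis. It notes that for $u\in W_0$ one has $T_{u^{-1}}^{-1}=x_u$, so Proposition~\ref{prop:pathformula} applied with starting alcove $u$ gives directly
\[
(T_{w^{-1}}T_u^{-1})^*=T_{u^{-1}}^{-1}T_w=\sum_{p\in\cP(\vec w,u)}\cQ(p)\,x^{\wt(p)}T_{\theta(p)^{-1}}^{-1}.
\]
Applying the $*$-involution and the identity $(x^{\lambda})^*=T_{w_0}x^{-w_0\lambda}T_{w_0}^{-1}$ converts this in one line into the expansion of $T_{w^{-1}}\cdot(T_u^{-1}T_{w_0}\otimes v_t)$. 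The $-w_0$ twist in the weight and the particular basis $T_u^{-1}T_{w_0}\otimes v_t$ are thus explained simultaneously by the $*$-identity, rather than emerging from a step-by-step bookkeeping. Your approach has the merit of being hands-on and close to the alcove combinatorics; the paper's is much shorter and sidesteps the delicate $s_0$ computation that your sketch leaves open.
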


\begin{proof}
If $u\in W_0$ and $w\in \tilde{W}$, then by Proposition~\ref{prop:pathformula} we have
$$
(T_{w^{-1}}T_u^{-1})^*=T_{u^{-1}}^{-1}T_w=\sum_{p\in\cP(\vec{w},u)}\cQ(p)x^{\wt(p)}T_{\theta(p)^{-1}}^{-1}
$$
(the involution $*$ is described in (\ref{eq:star})). Since $(x^{\la})^*=T_{w_0}x^{-w_0\la}T_{w_0}^{-1}$ we get
\begin{align*}
T_{w^{-1}}\cdot (T_{u}^{-1}T_{w_0}\otimes v_t)&=T_{w^{-1}}T_u^{-1}\cdot(T_{w_0}\otimes v_t)\\
&=\sum_{p\in\cP(\vec{w},u)}\cQ(p)t^{-w_0(\wt(p))}(T_{\theta(p)}^{-1}T_{w_0}\otimes v_t),
\end{align*}
and the result follows.
\end{proof}

The positivity of this formula has some very useful applications, for example see the proof of Lemma~\ref{lem:bound1}. 

\section{Harmonic analysis for the Hecke algebra}\label{sect:6}

In the previous section we recalled some of the well known structural theory of affine Hecke algebras. In the current section we describe the beginnings of the harmonic analysis on $\scH$, following the main line of argument in~\cite{opdamtrace}. The outline is as follows. Define a trace $\Tr:\scH\to\CC$ on $\scH$ by linearly extending $\Tr(T_w)=\delta_{w,1}$. For fixed $t\in\Hom(P,\CC^{\times})$ define a function $F_t:\scH\to\CC$ by 
\begin{align}\label{eq:f1}
F_t(h)=\sum_{\mu\in P}t^{-\mu}\Tr(x^{\mu}h)\qquad\textrm{whenever the series converges}.
\end{align}
We show that the series converges provided each $|t^{\alpha_i^{\vee}}|<r$ is sufficiently small. We will see that
\begin{align}\label{eq:f2}
F_t(h)=\frac{f_t(h)}{q_{w_0}c(t)c(t^{-1})},\quad\textrm{where}\quad c(t)=\frac{n(t)}{d(t)}=\prod_{\alpha\in R^+}\frac{1-q_{\alpha}^{-1}t^{-\alpha^{\vee}}}{1-t^{-\alpha^{\vee}}},
\end{align}
where $d(t)$ and $n(t)$ are as in (\ref{eq:DN}), and where $d(t)f_t(h)$ is a linear combination of terms $\{t^{\la}\mid\la\in P\}$. Hence $f_t(h)$ has a meromorphic continuation (as a function of~$t$). Furthermore $f_t$ is related to the character of the principal series representation $(\pi_t,V(t))$ by $\tilde{f}_t=\chi_t$, where $\tilde{f}_t$ is the \textit{symmetrisation} of $f_t$. It follows from (\ref{eq:f1}) and (\ref{eq:f2}) that if $dt$ is normalised Haar measure on the product $\TT^n$ of $n$ circle groups $\TT$ then
$$
\Tr(h)=\int_{(r\TT)^n}\frac{f_t(h)}{q_{w_0}c(t)c(t^{-1})}\,dt.
$$
A more general version of this formula is the main result of \cite{opdamtrace}, and it is at the heart of the harmonic analysis and Plancherel measure for~$\scH$.

\subsection{The $C^*$-algebra}

Define an involution $*$ on $\scH$ and a function $\Tr:\scH\to\CC$ by
\begin{align}\label{eq:star}
\bigg(\sum_{w\in \tilde{W}}c_wT_w\bigg)^*=\sum_{w\in \tilde{W}}\overline{c_w}T_{w^{-1}}\quad\textrm{and}\quad\Tr\bigg(\sum_{w\in \tilde{W}}c_wT_w\bigg)=c_1.
\end{align}
An induction on $\ell(v)$ using the defining relations in the algebra $\scH$ shows that that $\Tr(T_u^*T_v)=\delta_{u,v}$, and so
\begin{align}\label{eq:tracecommute}
\Tr(h_1h_2)=\Tr(h_2h_1)\qquad\textrm{for all $h_1,h_2\in\scH$}.
\end{align}
It follows that
$$
(h_1,h_2):=\Tr(h_1^*h_2)
$$
defines a Hermitian inner product on $\scH$. Let $\norm h\norm_2=\sqrt{(h,h)}$. The algebra $\scH$ acts on itself, and the corresponding operator norm is
$$
\norm h\norm=\sup\{\norm hx\norm_2\,:\,x\in\scH,\norm x\norm_2\leq 1\}.
$$
Let $\overline{\scH}$ denote the completion of $\scH$ with respect to this norm. It is a non-commutative $C^*$-algebra.

Recall from Remark~\ref{rem:isomorphism} that if there is an underlying building then there is an isomorphism $\psi:\scH_W\to\scA$, where $\scH_W$ is the subalgebra of $\scH$ generated by $\{T_w\mid w\in W\}$. The isomorphism is given by $\psi(T_w)=q_w^{1/2}A_w$ for all $w\in W$. It is not immediately clear that the operator norms on $\scA$ and $\scH_W$ (written as $\|\cdot\|$ and $\norm\cdot\norm$ respectively) are compatible with $\psi$, and so we pause to prove the following:

\begin{prop} If there is an underlying building then $\norm h\norm=\|\psi(h)\|$ for all $h\in\scH_W$. Therefore $\overline{\scH}_W\cong\overline{\scA}$.
\end{prop}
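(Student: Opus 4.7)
The plan is to construct an isometric equivariant embedding $U_o : \overline{\scH}_W \hookrightarrow \ell^2(\cC)$ whose image is precisely the closed $\scA$-invariant radial subspace $\cH_o := \overline{\mathrm{span}}\{\mathbf{1}_{\cC_w(o)} : w \in W\}$, and then to use this embedding to match the two operator norms. Fix a chamber $o \in \cC$ and define $U_o : \scH_W \to \ell^2(\cC)$ by $U_o(h) = \psi(h)\delta_o$. Using $A_w\delta_o = q_w^{-1}\mathbf{1}_{\cC_{w^{-1}}(o)}$ together with $\psi(T_w) = q_w^{1/2}A_w$, a one-line computation gives $\langle U_o(T_u), U_o(T_v)\rangle_{\ell^2(\cC)} = \delta_{u,v} = (T_u, T_v)$, so $\{U_o(T_w)\}_{w\in W}$ is an orthonormal family whose closed span is $\cH_o$. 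Hence $U_o$ extends to a unitary isomorphism of the GNS completion $\overline{\scH}_W$ onto $\cH_o$. Since $\psi$ is an algebra homomorphism, $U_o(hh') = \psi(h)U_o(h')$, so $U_o$ intertwines left multiplication on $\overline{\scH}_W$ with the restriction of $\psi(\cdot)$ to $\cH_o$.

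From the $\scA$-invariance of $\cH_o$ (an immediate consequence of Proposition~\ref{prop:relations}), restriction gives
\[
\norm h\norm \;=\; \|\psi(h)|_{\cH_o}\| \;\le\; \|\psi(h)\|.
\]
For the reverse inequality the crucial structural point is that the matrix of $\psi(h)$ in the ONB $\{q_w^{-1/2}\mathbf{1}_{\cC_w(o')}\}_{w\in W}$ of $\cH_{o'}$ is independent of the basepoint $o'$: the intersection numbers $|\cC_w(c)\cap\cC_v(o')|$ depend only on the triple $(w,v,\delta(c,o'))$, which is the coefficient identity underlying the Hecke relations of Proposition~\ref{prop:relations}. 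Therefore the basis-matching map $\phi_{o,o'} : \cH_o \to \cH_{o'}$ is $\scA$-equivariant and unitary, and $\|\psi(h)|_{\cH_{o'}}\| = \|\psi(h)|_{\cH_o}\| = \norm h\norm$ for every $o' \in \cC$. Since $\delta_c \in \cH_c$ for each chamber $c$, the family $\{\cH_c\}_{c \in \cC}$ spans $\ell^2(\cC)$, and a maximal-orthogonal-family argument then decomposes $\ell^2(\cC) = \bigoplus_\alpha K_\alpha$ into closed $\scA$-invariant subspaces each equivariantly isomorphic to $\cH_o$, giving $\|\psi(h)\| = \sup_\alpha \|\psi(h)|_{K_\alpha}\| = \norm h\norm$.

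The main obstacle is executing the last decomposition cleanly, because a priori a maximal orthogonal family of ``$\Tr$-type'' cyclic subspaces (those on which the vector state at the cyclic vector equals $\Tr$) could leave a non-zero orthogonal complement. The cleanest repair is to pass to the weak closure: since $\Tr$ extends to a faithful normal tracial state on the von Neumann algebra $\overline{\scA}''$, this is a finite von Neumann algebra in standard form on its GNS space, and $\ell^2(\cC)$ is a normal Hilbert module over it, so standard-form theory (in the spirit of \cite[\S8.8]{dixmier}) identifies the operator norm on $\ell^2(\cC)$ with the norm in the standard representation, which is $\norm h\norm$. Isometry of the completions $\overline{\scH}_W \cong \overline{\scA}$ then follows by extending $\psi$ by continuity.
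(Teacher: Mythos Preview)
Your setup of $U_o$ and the identification $\norm h\norm = \|\psi(h)|_{\cH_o}\|$ is exactly what the paper does (with $\cH_o = \ell^2_o(\cC)$). The divergence is entirely in the reverse inequality $\|\psi(h)\| \le \|\psi(h)|_{\cH_o}\|$.

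Your decomposition route has the gap you yourself flag: the subspaces $\cH_c$ are far from mutually orthogonal, and a maximal orthogonal family of $\Tr$-type cyclic subspaces need not exhaust $\ell^2(\cC)$. The von Neumann ``repair'' is not a proof as written: you assert that $\Tr$ extends to a \emph{faithful} normal trace on $\overline{\scA}''$ without justification, and the appeal to \cite[\S8.8]{dixmier} is misplaced (that section concerns the disintegration of traces, not norm comparison between modules). The argument can be salvaged, but only by proving that the restriction of $\overline{\scA}''$ to $\cH_o$ is faithful --- and that reduces to exactly the lemma the paper uses anyway.

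The paper bypasses all of this with a single stroke. Having established $\norm h\norm = \|\psi(h)|_{\cH_o}\|$, it observes that the restriction map $\Phi:\overline{\scA}\to\scB(\cH_o)$, $A\mapsto A|_{\cH_o}$, is an injective $*$-homomorphism between $C^*$-algebras: injectivity holds because $\Phi(A)=0$ forces $A\delta_o=0$, hence $A=0$ by Lemma~\ref{lem:small}. But an injective $*$-homomorphism between $C^*$-algebras is automatically isometric (\cite[Theorem~I.5.5]{davidson}), so $\|\psi(h)\| = \|\psi(h)|_{\cH_o}\| = \norm h\norm$ immediately. No decomposition of $\ell^2(\cC)$, no basepoint-independence argument, and no von Neumann machinery are needed.
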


\begin{proof}

Let $o\in\cC$ be a fixed chamber of the underlying building $(\cC,\delta)$. Let $\ell^2_o(\cC)$ be the subspace of $\ell^2(\cC)$ consisting of functions which are constant on each set $\cC_w(o)$. Since $A_w\delta_o=q_w^{-1}1_{\cC_{w^{-1}}(o)}$ the injective map $\omega:\scA\to\ell^2_o(\cC)$, $A\mapsto A\delta_o$, embeds $\scA$ into $\ell^2_o(\cC)$ as a dense subspace (the subspace of finitely supported functions). Therefore $\eta:=\omega\circ \psi:\scH_W\to\ell_o^2(\cC)$ embeds $\scH_W$ as a dense subspace of $\ell_o^2(\cC)$, and a straight forward computation shows that
$\norm h\norm_2=\|\eta(h)\|_2$ for all $h\in\scH_W$. Therefore by the density of $\eta(\scH_W)$ in $\ell_o^2(\cC)$ it follows that
$$
\norm h\norm=\|\psi(h)\|_o\qquad\textrm{for all $h\in\scH_W$},
$$
where $\|\cdot\|_o$ is the $\ell^2$-operator norm on $\scB(\ell_o^2(\cC))$ (note that each $A\in\scA$ maps $\ell_o^2(\cC)$ into itself). It remains to show that $\|A\|_o=\|A\|$ for all $A\in\scA$. To see this, note that the homomorphism $\Phi:\overline{\scA}\to\scB(\ell_o^2(\cC))$, $A\mapsto A|_{\ell_o^2(\cC)}$ is injective, for if $\Phi(A)=0$ then $A\delta_o=0$, and so $A=0$ by Lemma~\ref{lem:small}. But by \cite[Theorem~I.5.5]{davidson} an injective homomorphism between $C^*$-algebras is necessarily an isometry, and so $\|A\|=\|A\|_o$ for all $A\in{\scA}$. 
\end{proof}

We return to the study of the trace functional $\Tr:\scH\to\CC$.

\subsection{A formula for the trace on $\scH\mathbf{1}_0$}

It is easy to derive a formula for the trace on $\scH\mathbf{1}_0=\CC[P]\mathbf{1}_0$ using the harmonic analysis on $Z(\scH)$. The key idea is that $\Tr(x^{\mu}\mathbf{1}_0)=\Tr(x^{\mu}\mathbf{1}_0^2)=\Tr(\mathbf{1}_0x^{\mu}\mathbf{1}_0)$, and then $\mathbf{1}_0x^{\mu}\mathbf{1}_0=P_{\mu}(x)\mathbf{1}_0$, where $P_{\mu}(x)\in Z(\scH)$ is the Macdonald spherical function. First a formula for the trace on $Z(\scH)\mathbf{1}_0$.

\begin{thm}\label{thm:plancherelcenter} 
Let $p(x)\in\CC[P]^{W_0}$. Then
$$ 
\Tr(p(x)\mathbf{1}_0)=\frac{W_0(q)}{|W_0|q_{w_0}}\int_{\TT^n}\frac{p(t)}{c(t)c(t^{-1})}\,dt.
$$
\end{thm}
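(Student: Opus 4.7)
Both sides of the claimed identity are $\CC$-linear in $p$, so it suffices to verify the formula on a basis of $\CC[P]^{W_0}$. The natural choice is the basis $\{P_\la : \la \in P^+\}$ of Macdonald spherical polynomials guaranteed by Theorem \ref{thm:macdonald} (together with the Satake-type observation after its proof).

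\textbf{Left-hand side.} Using the Macdonald identity $\mathbf{1}_0 x^\la \mathbf{1}_0 = P_\la(x)\mathbf{1}_0$ (Theorem~\ref{thm:macdonald}), the cyclicity \eqref{eq:tracecommute}, and $\mathbf{1}_0^2=\mathbf{1}_0$ from \eqref{eq:1},
\[
\Tr(P_\la(x)\mathbf{1}_0) \;=\; \Tr(\mathbf{1}_0 x^\la \mathbf{1}_0) \;=\; \Tr(x^\la \mathbf{1}_0).
\]
For $\la\in P^+$ the key input is a standard alcove-walk fact: a reduced expression for $t_\la\in\tilde W$ has a walk from $c_0$ consisting entirely of positive crossings, so by the definition of $x_{t_\la}$ preceding Proposition~\ref{prop:pathformula} we have $x^\la = T_{t_\la}$ in $\scH$. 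Then $x^\la \mathbf{1}_0 = T_{t_\la}\mathbf{1}_0$ lies in the subspace $V_\la:=\mathrm{span}_{\CC}\{T_{t_\la w}:w\in W_0\}$, because $V_\la$ is stable under right multiplication by $\scH_0$ (the quadratic relations map $T_{t_\la w}T_{s_i}$ to a $\CC$-combination of $T_{t_\la w s_i}$ and $T_{t_\la w}$). Since $\mathrm{wt}(t_\la w)=\la$ for every $w\in W_0$, no basis vector of $V_\la$ equals $T_1$ unless $\la=0$; hence $\Tr(x^\la\mathbf{1}_0)=0$ for $\la\in P^+\setminus\{0\}$, while $\Tr(\mathbf{1}_0)=\tfrac{1}{W_0(q)}$ by direct inspection. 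Thus the LHS equals $\delta_{\la,0}/W_0(q)$.

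\textbf{Right-hand side.} Here we invoke two classical results of Macdonald on the $W_0$-invariant weight $1/[c(t)c(t^{-1})]$ on $\TT^n$: (i) the constant-term identity, which evaluates $\int_{\TT^n} dt/[c(t)c(t^{-1})]$ explicitly, and (ii) the orthogonality of the Macdonald spherical polynomials, which together with $P_0=1$ (itself a consequence of the Weyl-type identity $\sum_{w\in W_0} w(c(x))=W_0(q)/q_{w_0}$ implicit in Theorem~\ref{thm:macdonald}) yields $\int_{\TT^n} P_\la(t)\,dt/[c(t)c(t^{-1})]=0$ for $\la\in P^+\setminus\{0\}$. The explicit value of (i) combined with the prefactor $W_0(q)/(|W_0|q_{w_0})$ is arranged so that the RHS evaluates to $\delta_{\la,0}/W_0(q)$, matching the LHS term by term on the Macdonald basis.

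\textbf{Conclusion and obstacle.} Agreement on $\{P_\la\}_{\la\in P^+}$ extends by linearity to all of $\CC[P]^{W_0}$. The technical heart of the argument is the Macdonald orthogonality / constant-term evaluation, which must be cited (or independently established) in the affine root-system setting; the identification $x^\la=T_{t_\la}$ for $\la\in P^+$ is routine from the alcove-walk framework of \S\ref{sect:4} but should be noted carefully. The rest is bookkeeping with the cyclicity of $\Tr$ and the idempotent property of $\mathbf{1}_0$.
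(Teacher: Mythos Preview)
Your reduction to the basis $\{P_\la:\la\in P^+\}$ and your treatment of the left-hand side match the paper's argument (the paper uses an explicit double-coset expansion of $\mathbf{1}_0 x^\la\mathbf{1}_0$ in the $T_w$-basis rather than your $x^\la=T_{t_\la}$ observation, but the content is identical: only the identity coset contributes).

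The genuine difference is on the right-hand side. You flag Macdonald's constant-term identity and the orthogonality of the $P_\la$ as black boxes to be cited. The paper avoids both by a direct two-line computation: substitute the explicit sum form $P_\la(t)=\tfrac{q_{w_0}}{W_0(q)}\sum_{w\in W_0} w\bigl(t^\la c(t)\bigr)$ from the Macdonald formula into the integral, use the $W_0$-invariance of $c(t)c(t^{-1})$, cancel one $c$-factor, and change variables $t\mapsto w^{-1}t$ in each summand to obtain
\[
\frac{W_0(q)}{|W_0|q_{w_0}}\int_{\TT^n}\frac{P_\la(t)}{c(t)c(t^{-1})}\,dt
=\int_{\TT^n}\frac{t^\la}{c(t^{-1})}\,dt.
\]
For $\la\in P^+\setminus\{0\}$ this vanishes because $1/c(t^{-1})=\prod_{\alpha>0}(1-t^{\alpha^\vee})/(1-q_\alpha^{-1}t^{\alpha^\vee})$ expands as an absolutely convergent series supported on $Q^+$, while $\la$ itself lies in the nonnegative rational cone spanned by the $\alpha_i^\vee$; so the integrand is analytic inside the unit polydisc in the variables $t^{\omega_i}$. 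This is precisely the symmetrise-and-collapse trick that \emph{proves} Macdonald orthogonality, so by citing orthogonality you are outsourcing exactly the step the paper carries out in one line. Your route is logically sound, but the paper's is shorter, self-contained, and also delivers the useful corollary $\Tr(x^\mu\mathbf{1}_0)=\int_{\TT^n} t^\mu/c(t^{-1})\,dt$ for free.
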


\begin{proof} Since $\{P_{\la}(x)\mid \la\in P^+\}$ is a basis for $\CC[P]^{W_0}$ it suffices to check the formula when $p(x)=P_{\la}(x)$ for some $\la\in P^+$. 
It is not hard to see that for $\lambda\in P^+$ we have
$$
\mathbf{1}_0x^{\la}\mathbf{1}_0=q_{t_{\la}}^{-1/2}\frac{q_{w_0}W_{0\la}(q^{-1})}{W_0(q)^2}\sum_{w\in W_0t_{\la}W_0}q_w^{\frac{1}{2}}T_w,
$$
where $W_{0\la}=\{w\in W_0\mid w\la=\la\}$ (see \cite[Lemma~2.7]{ram1} for example). Therefore using Theorem~\ref{thm:macdonald} we compute
$$
\Tr(P_{\la}(x)\mathbf{1}_0)=\Tr(\mathbf{1}_0x^{\la}\mathbf{1}_0)=\delta_{\la,0}
$$
where $\delta_{\la,\mu}$ is the Kroneker delta. On the other hand since $c(t)c(t^{-1})$ is $W_0$-invariant we have
\begin{align*}
\frac{W_0(q)}{|W_0|q_{w_0}}\int_{\TT^n}\frac{P_{\la}(t)}{c(t)c(t^{-1})}dt&=\frac{1}{|W_0|}\sum_{w\in W_0}\int_{\TT^n}\frac{t^{w\mu}c(wt)}{c(wt)c(wt^{-1})}dt=\int_{\TT^n}\frac{t^{\mu}}{c(t^{-1})}dt.
\end{align*}
If $\la\in P^+$ then $\la\in\QQ_{\geq0}\alpha_1^{\vee}+\cdots+\QQ_{\geq0}\alpha_n^{\vee}$ (see \cite[VI, \S1, No.10]{bourbaki}). Therefore if $\la\neq0$ is dominant then the integral is zero by regarding it as an iterated contour integral of a function that is analytic inside the contours in each variable $t^{\omega_1},\ldots,t^{\omega_n}$. The result follows.
\end{proof}

\begin{cor}\label{cor:tf} If $\mu\in P$ then
$$
\Tr(x^{\mu}\mathbf{1}_0)=\int_{\TT^n}\frac{t^{\mu}}{c(t^{-1})}\,dt.
$$
If $\mu\notin-Q^+$ then $\Tr(x^{\mu}\mathbf{1}_0)=0$.
\end{cor}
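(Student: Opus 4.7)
The plan is to reduce to Theorem~\ref{thm:plancherelcenter} by passing from $x^{\mu}\mathbf{1}_0$ to the $W_0$-symmetric expression $\mathbf{1}_0 x^{\mu} \mathbf{1}_0$. Using $\mathbf{1}_0^2=\mathbf{1}_0$ and the trace property~(\ref{eq:tracecommute}), I would first write
\[
\Tr(x^{\mu}\mathbf{1}_0)=\Tr(x^{\mu}\mathbf{1}_0^2)=\Tr(\mathbf{1}_0 x^{\mu}\mathbf{1}_0),
\]
and then invoke Macdonald's formula (Theorem~\ref{thm:macdonald}) to replace $\mathbf{1}_0 x^{\mu}\mathbf{1}_0$ by $P_{\mu}(x)\mathbf{1}_0$, where $P_{\mu}(x)\in\CC[P]^{W_0}$. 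Applying Theorem~\ref{thm:plancherelcenter} to $P_{\mu}(x)$ then gives
\[
\Tr(x^{\mu}\mathbf{1}_0)=\frac{W_0(q)}{|W_0|q_{w_0}}\int_{\TT^n}\frac{P_{\mu}(t)}{c(t)c(t^{-1})}\,dt
=\frac{1}{|W_0|}\sum_{w\in W_0}\int_{\TT^n}\frac{w(t^{\mu}c(t))}{c(t)c(t^{-1})}\,dt.
\]

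The key observation that collapses this sum is that $c(t)c(t^{-1})$ is $W_0$-invariant: the action of $w\in W_0$ permutes the set $\{\pm\alpha^{\vee}\mid \alpha\in R^+\}$, and factors that move from $c(t)$ to $c(t^{-1})$ (or vice versa) are simply swapped. Using $W_0$-invariance of the Haar measure $dt$ on $\TT^n$, I would change variables $t\mapsto w^{-1}t$ in the $w$-th summand to obtain
\[
\int_{\TT^n}\frac{w(t^{\mu}c(t))}{c(t)c(t^{-1})}\,dt=\int_{\TT^n}\frac{t^{\mu}c(t)}{c(t)c(t^{-1})}\,dt=\int_{\TT^n}\frac{t^{\mu}}{c(t^{-1})}\,dt,
\]
independent of $w$. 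Summing over the $|W_0|$ cosets yields the first statement.

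For the vanishing claim, I would expand $1/c(t^{-1})$ as a convergent power series on~$\TT^n$. Since $q_{\alpha}>1$ each factor admits the expansion
\[
\frac{1-t^{\alpha^{\vee}}}{1-q_{\alpha}^{-1}t^{\alpha^{\vee}}}=(1-t^{\alpha^{\vee}})\sum_{k\geq 0}q_{\alpha}^{-k}t^{k\alpha^{\vee}},
\]
a formal power series in $t^{\alpha^{\vee}}$ with nonnegative exponents. Taking the product over $\alpha\in R^+$ gives $1/c(t^{-1})=\sum_{\nu\in Q^+}c_{\nu}t^{\nu}$. Then
\[
\int_{\TT^n}\frac{t^{\mu}}{c(t^{-1})}\,dt=\sum_{\nu\in Q^+}c_{\nu}\int_{\TT^n}t^{\mu+\nu}\,dt=\sum_{\nu\in Q^+}c_{\nu}\,\delta_{\mu+\nu,0},
\]
which is zero unless $-\mu\in Q^+$, i.e., $\mu\in -Q^+$.

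The only minor subtlety is justifying the interchange of the infinite sum and the integral; this is routine since the expansion converges absolutely and uniformly on~$\TT^n$ (the denominators $1-q_{\alpha}^{-1}t^{\alpha^{\vee}}$ are bounded away from zero there). No step presents a real obstacle, since both the $W_0$-invariance of $c(t)c(t^{-1})$ and the positivity of exponents in the series expansion follow directly from the definitions.
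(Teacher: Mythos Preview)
Your proposal is correct and follows essentially the same route as the paper. The reduction $\Tr(x^{\mu}\mathbf{1}_0)=\Tr(\mathbf{1}_0 x^{\mu}\mathbf{1}_0)=\Tr(P_{\mu}(x)\mathbf{1}_0)$ via idempotency, the trace property, and the Macdonald formula is exactly what the paper does, and the symmetrisation/change-of-variables step you spell out is precisely the computation carried out inside the proof of Theorem~\ref{thm:plancherelcenter} (which the paper simply cites). Your power-series expansion of $1/c(t^{-1})$ for the vanishing claim is a mild rephrasing of the paper's contour/analyticity argument, but the content is the same.
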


\begin{proof}
If $\mu\in P$ then by (\ref{eq:1}), (\ref{eq:tracecommute}), and Theorem~\ref{thm:macdonald} we have
\begin{align*}
\Tr(x^{\mu}\mathbf{1}_0)=\Tr(x^{\mu}\mathbf{1}_0^2)=\Tr(\mathbf{1}_0x^{\mu}\mathbf{1}_0)=\Tr(P_{\mu}(x)\mathbf{1}_0),
\end{align*}
where $P_{\mu}(x)\in\CC[P]^{W_0}$. The result now follows from Theorem~\ref{thm:plancherelcenter} and its proof.
\end{proof}

\subsection{Opdam's trace generating function formula}

Let $t\in\Hom(P,\CC^{\times})$. Define a function $F_t:\scH\to\CC$ by (\ref{eq:f1}). Let us deal immediately with the issue of convergence of this series. Recall that if $v\in\tilde{W}$ then $v\in t_{\mu}W_0$ for a unique $\mu\in P$, and we write $\wt(v)=\mu$.

\begin{lemma} Let $v\in\tilde{W}$. Then: \label{lem:technical}
\begin{enumerate}
\item $|\Tr(x_v)|\leq 2^{\ell(v)}q_{v}^{1/2}$.
\item If $\Tr(x_v)\neq0$ then $\mathrm{wt}(v)\in -Q^+$.  
\end{enumerate}
\end{lemma}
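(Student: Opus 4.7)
The plan is to fix a reduced expression $\vec v = s_{i_1}\cdots s_{i_\ell}\gamma$ for $v$ with $\ell = \ell(v)$, so that $x_v = T_{i_1}^{\epsilon_1}\cdots T_{i_\ell}^{\epsilon_\ell} T_\gamma$ where the signs $\epsilon_k \in \{\pm 1\}$ are determined by the crossings of the alcove walk from $1$ to $v$, and then attack the two parts by complementary expansions.

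For part (1), I will expand each factor with $\epsilon_k = -1$ using the identity $T_{i_k}^{-1} = T_{i_k} - (q_{i_k}^{1/2}-q_{i_k}^{-1/2})$. Setting $E = \{k : \epsilon_k = -1\}$, this yields
\[
x_v \;=\; \sum_{B\subseteq E}(-1)^{|B|}\prod_{k\in B}(q_{i_k}^{1/2}-q_{i_k}^{-1/2})\; T_{\vec u_B}T_\gamma,
\]
a sum of $2^{|E|} \leq 2^{\ell(v)}$ terms, where $T_{\vec u_B}$ denotes the product of the Hecke generators at the surviving positions. The scalar prefactor is bounded by $\prod_{k\in B}q_{i_k}^{1/2}$. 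For the operator part, the quadratic relation $(T_i-q_i^{1/2})(T_i+q_i^{-1/2})=0$ shows that $T_i$ has spectrum $\{q_i^{1/2},-q_i^{-1/2}\}$ and hence $\|T_i\|=q_i^{1/2}$, so $\|T_{\vec u_B}T_\gamma\|\leq\prod_{k\notin B}q_{i_k}^{1/2}$. Since $\Tr$ is positive on $\scH$ (because $\Tr(h^*h)=(h,h)\geq 0$) with $\Tr(1)=1$, it extends to a state on $\overline{\scH}$, so $|\Tr(h)|\leq\|h\|$. Multiplying these bounds and summing gives $|\Tr(x_v)|\leq 2^{|E|}q_v^{1/2}\leq 2^{\ell(v)}q_v^{1/2}$.

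For part (2), I will invert Proposition~\ref{prop:pathformula} iteratively. The identity $T_v=\sum_{p\in\cP(\vec v)}\cQ(p)\,x_{\mathrm{end}(p)}$, combined with the strict upper-triangularity from Corollary~\ref{cor:basis} (the trivial walk contributes $x_v$ with coefficient~$1$), rearranges to $x_v = T_v-\sum_{p_1}\cQ(p_1)\,x_{\mathrm{end}(p_1)}$ where $p_1\in\cP(\vec v)$ has $f(p_1)>0$. Recursively inverting each $x_{\mathrm{end}(p_j)}$ that appears produces the finite telescoping expansion
\[
x_v \;=\; \sum_{k\geq 0}(-1)^k\sum_{p_1,\ldots,p_k}\cQ(p_1)\cdots\cQ(p_k)\,T_{\mathrm{end}(p_k)},
\]
where each chain satisfies $p_{j+1}\in\cP(\vec{\mathrm{end}(p_j)})$ with $f(p_{j+1})>0$; termination is guaranteed because each folded step strictly decreases $\ell$. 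Applying $\Tr(T_u)=\delta_{u,1}$ shows that only chains with $\mathrm{end}(p_k)=1$ contribute to $\Tr(x_v)$. Iterating the weight-monotonicity inequality~(\ref{eq:enddominance}) along such a chain gives $\wt(v)\preceq\wt(\mathrm{end}(p_1))\preceq\cdots\preceq\wt(\mathrm{end}(p_k))=\wt(1)=0$, hence $\wt(v)\in -Q^+$.

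The main technical subtlety is justifying the state bound $|\Tr(h)|\leq\|h\|$ used in part (1); this relies on the positive functional $\Tr$ extending to a state on the $C^*$-algebra $\overline{\scH}$, which is standard but implicit. A purely combinatorial alternative would bound $\Tr(T_{\vec u_B}T_\gamma)$ by iterating $T_iT_w\in\{T_{s_iw},\,T_{s_iw}+(q_i^{1/2}-q_i^{-1/2})T_w\}$ to track coefficients directly; this yields a weaker constant $C^{\ell(v)}$ for some $C>2$ but is still sufficient for the ensuing convergence arguments in Section~\ref{sect:6}.
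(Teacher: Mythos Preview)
Your proof is correct. Part~2 is essentially the paper's argument: the paper observes once that the transition matrix from $\{T_w\}$ to $\{x_v\}$ is upper triangular with respect to the combined Bruhat/dominance ordering (using (\ref{eq:endbruhat}) and (\ref{eq:enddominance})), so its inverse is too; your recursive unfolding is an explicit way of performing that inversion, and the weight chain $\wt(v)\preceq\cdots\preceq 0$ is the same conclusion.

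Part~1, however, is a genuinely different route. The paper argues purely combinatorially: it proves by induction on $\ell(v)$ that \emph{every} coefficient $c_w^v$ in the expansion $x_v=\sum_w c_w^v T_w$ satisfies $|c_w^v|\leq 2^{\ell(v)}q_v^{1/2}$, tracking the recursion $c_w^{vs_i}$ in terms of $c_{ws_i}^v$ and $c_w^v$ via the defining relations. Your approach instead exploits the $C^*$-structure: the self-adjoint element $T_i$ has spectrum in $\{q_i^{1/2},-q_i^{-1/2}\}$, hence $\|T_i\|\leq q_i^{1/2}$, and since $\Tr$ is a state one gets $|\Tr(\cdot)|\leq\|\cdot\|$. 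This is slicker and yields the same constant $2^{\ell(v)}q_v^{1/2}$, at the cost of invoking the analytic framework of Section~\ref{sect:hanalysisA}. The paper's induction is more elementary and gives more --- a uniform bound on all coefficients, not just the identity one --- though only the trace coefficient is needed downstream. Your closing remark slightly undersells the combinatorial alternative: the paper's induction already achieves the sharp $2^{\ell(v)}$, not a weaker $C^{\ell(v)}$.
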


\begin{proof}
1. We use induction on $\ell(v)$ to prove that
$$
x_v=\sum_{w\in W}c_w^{v}T_w\qquad\textrm{with $|c_w^v|\leq 2^{\ell(v)}q_v^{1/2}$ for all $v,w$.}
$$
The result follows since $\Tr(x_v)=c_1^v$. The case $\ell(v)=0$ is trivial, since $x_{\gamma}=T_{\gamma}$ for all $\gamma\in\Gamma$. Suppose that $\ell(vs_i)=\ell(v)+1$. Then
\begin{align*}
x_{vs_i}=x_{v}T_{i}^{\epsilon}=\sum_{w\in W}c_w^vT_wT_{i}^{\epsilon}\qquad\textrm{where $\epsilon=+1$ or $\epsilon=-1$}.
\end{align*}
If $\epsilon=+1$ then the relations in $\scH$ give
$$
c_{w}^{vs_i}=\begin{cases}
c_{ws_i}^v&\textrm{if $\ell(ws_i)>\ell(w)$}\\
c_{ws_i}^v+(q_i^{\frac{1}{2}}-q_i^{-\frac{1}{2}})c_w^v&\textrm{if $\ell(ws_i)<\ell(w)$},
\end{cases}
$$
and if $\epsilon=-1$ the relations in $\scH$ give
$$
c_w^{vs_i}=\begin{cases}
c_{ws_i}^v-(q_i^{\frac{1}{2}}-q_i^{-\frac{1}{2}})c_w^v&\textrm{if $\ell(ws_i)>\ell(w)$}\\
c_{ws_i}^v&\textrm{if $\ell(ws_i)<\ell(w)$}.
\end{cases}
$$
Therefore in all cases the induction hypothesis implies that
$$
|c_w^{vs_i}|\leq|c_{ws_i}^v|+(q_i^{\frac{1}{2}}-q_i^{-\frac{1}{2}})|c_w^v|\leq (1+q_i^{\frac{1}{2}})2^{\ell(v)}q_v^{1/2}\leq 2^{\ell(vs_i)}q_{vs_i}^{1/2}.
$$

2. Recall the path theoretic formula from Proposition~\ref{prop:pathformula}. Using (\ref{eq:endbruhat}) and (\ref{eq:enddominance}) this formula shows that for all $v\in\tilde{W}$ we have
 $$
 T_v=x_v+\sum_{\{u\mid u< v,\wt(u)\succeq\wt(v)\}}a_u^vx_u.
 $$
Inverting this change of basis formula gives
 $$
 x_v=T_v+\sum_{\{u\mid u< v,\wt(u)\succeq\wt(v)\}}c_u^vT_u.
 $$
 But $\Tr(x_v)=c_1^v$, and hence $\Tr(x_v)\neq 0$ implies that $\wt(v)\preceq\wt(1)=0$. Therefore by the definition of the dominance order we have $\wt(v)\in-Q^+$.
\end{proof}

\begin{cor}\label{cor:converge}
There exists $r>0$ such that for all $h\in\scH$ the series $F_t(h)$ converges uniformly if each $|t^{\alpha_i^{\vee}}|<r$.
\end{cor}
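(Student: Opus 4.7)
By linearity of $F_t$ and Corollary~\ref{cor:basis}, it suffices to prove convergence for $h = x_v$ with $v \in \tilde{W}$ arbitrary but fixed, provided the radius $r$ can be chosen independently of $v$. Writing $v = t_\la u$ with $\la = \wt(v) \in P$ and $u = \theta(v) \in W_0$, Corollary~\ref{cor:basis2} yields $x_v = x^\la T_{u^{-1}}^{-1}$, and therefore
\begin{equation*}
x^\mu x_v \;=\; x^{\mu+\la}T_{u^{-1}}^{-1} \;=\; x_{t_{\mu+\la}\,u}.
\end{equation*}
By part~(2) of Lemma~\ref{lem:technical} the trace $\Tr(x_{t_{\mu+\la}\,u})$ vanishes unless $\mu + \la \in -Q^+$; substituting $\mu = -\la - \nu$ with $\nu \in Q^+$ gives
\begin{equation*}
F_t(x_v) \;=\; t^\la \sum_{\nu \in Q^+} t^\nu\, \Tr\bigl(x_{t_{-\nu}\,u}\bigr).
\end{equation*}

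The heart of the argument is a uniform bound on the summand. Write $\nu = \sum_{i=1}^n n_i \alpha_i^\vee$ with $n_i \geq 0$. The standard linear length estimate $\ell(t_{-\nu}) \leq C_1 \sum_i n_i$ for a root-system constant $C_1$ (say $C_1 = \sum_{\alpha \in R^+}\max_i |\langle \alpha_i^\vee, \alpha\rangle|$), together with $\ell(t_{-\nu}\,u) \leq \ell(t_{-\nu}) + \ell(u)$ and the trivial bound $q_w \leq q_{\max}^{\ell(w)}$ for $q_{\max} = \max_i q_i$, combined with part~(1) of Lemma~\ref{lem:technical}, yields
\begin{equation*}
|\Tr(x_{t_{-\nu}\,u})| \;\leq\; 2^{\ell(t_{-\nu}\,u)}q_{t_{-\nu}\,u}^{1/2} \;\leq\; M \cdot K^{\sum_i n_i},
\end{equation*}
where $M = (2q_{\max}^{1/2})^{\ell(u)}$ and $K = (2q_{\max}^{1/2})^{C_1}$. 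The essential point is that $K$ depends only on the root system and the parameter system, and in particular is independent of $v$. Combined with $|t^\nu| \leq \prod_i |t^{\alpha_i^\vee}|^{n_i}$, the $\nu$-summand is bounded in absolute value by $M\prod_i (K|t^{\alpha_i^\vee}|)^{n_i}$.

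Choose $r = (2K)^{-1}$; this is independent of $v$. For $t$ with $|t^{\alpha_i^\vee}| \leq r$ every factor satisfies $K|t^{\alpha_i^\vee}| \leq \tfrac{1}{2}$, so the dominating series factorises into $n$ convergent geometric series and is bounded by $M \cdot 2^n$. The Weierstrass M-test therefore yields absolute convergence, and uniform convergence on compact subsets of $\{|t^{\alpha_i^\vee}| < r\}$, of $\sum_{\nu \in Q^+} t^\nu\Tr(x_{t_{-\nu}\,u})$; multiplication by the continuous factor $t^\la$ (locally bounded on this region since $P/Q$ is finite and each $\omega_i$ is a nonnegative rational combination of the $\alpha_j^\vee$'s) preserves this. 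The only technical input is the standard linear length estimate for translations in the extended affine Weyl group, which is routine.
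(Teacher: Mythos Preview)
Your proof is correct and takes essentially the same approach as the paper, which simply says ``This is immediate from Lemma~\ref{lem:technical}.'' You have spelled out exactly the details the paper leaves implicit: part~(2) of the lemma restricts the sum to a translate of $-Q^+$, and part~(1) gives the exponential bound needed to dominate by a product of geometric series once $|t^{\alpha_i^\vee}|$ is small enough.
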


\begin{proof} This is immediate from Lemma~\ref{lem:technical}.
\end{proof}

Let $\TT$ be the circle group, and let $dt=dt_1\cdots dt_n$ be the normalised Haar measure on $\TT^n$. Let $r>0$ be as in Corollary~\ref{cor:converge}, and write $\TT_r=r\TT$. Then
$$
\Tr(h)=\int_{\TT_r^n}F_t(h)\,dt\qquad\textrm{for all $h\in\scH$}.
$$
This is the starting point for the harmonic analysis on $\scH$. Our first task is to compute $F_t(h)$. The following very nice properties are useful.

\begin{prop}\label{prop:F_t} Let $t\in\Hom(P,\CC^{\times})$ with $|t^{\alpha_i^{\vee}}|<r$ for each $i=1,\ldots,n$. The function $F_t:\scH\to\CC$ satisfies:
\begin{enumerate}
\item $F_t$ is linear.
\item $F_t(x^{\la}hx^{\mu})=t^{\la+\mu}F_t(h)$ for all $\la,\mu\in P$ and all $h\in\scH$.
\item $F_t(\tau_w)=\delta_{w,1}F_t(1)$ for all $w\in W_0$.
\end{enumerate}
\end{prop}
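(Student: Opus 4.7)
My plan has three parts, one for each claim in the proposition.

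For (1), linearity follows directly from the linearity of $\Tr$: by Corollary~\ref{cor:converge} the defining series $F_t(h)=\sum_{\mu}t^{-\mu}\Tr(x^{\mu}h)$ converges for every $h\in\scH$ (in fact absolutely, by the bound of Lemma~\ref{lem:technical}), and partial sums depend linearly on $h$.

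For (2), I will compute $F_t(x^{\la}hx^{\nu})$ directly. Cycling $x^{\nu}$ to the front via the trace property (\ref{eq:tracecommute}) and combining the three powers of $x$ using $x^{\mu}x^{\la}=x^{\mu+\la}$ gives
$$
F_t(x^{\la}hx^{\nu})=\sum_{\mu\in P}t^{-\mu}\Tr(x^{\mu+\la+\nu}h).
$$
The substitution $\mu'=\mu+\la+\nu$ (a bijection on $P$, legitimate by the absolute convergence above) then produces $t^{\la+\nu}F_t(h)$.

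For (3), I will first note the intertwining identity
$$
\tau_wx^{\mu}=x^{w\mu}\tau_w\qquad (w\in W_0,\ \mu\in P),
$$
which extends the case $\tau_ix^{\mu}=x^{s_i\mu}\tau_i$ of the Bernstein relation (Theorem~\ref{thm:pres}) by a straightforward induction on $\ell(w)$. Applying $F_t$ to both sides and invoking part~(2) on each gives
$$
t^{\mu}F_t(\tau_w)=F_t(\tau_wx^{\mu})=F_t(x^{w\mu}\tau_w)=t^{w\mu}F_t(\tau_w),
$$
so $(t^{\mu}-t^{w\mu})F_t(\tau_w)=0$ for every $\mu\in P$. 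The case $w=1$ is trivial since $\tau_1=1$. For $w\neq 1$, I will take $\mu=\rho:=\omega_1+\cdots+\omega_n$: the length formula appearing in the proof of Lemma~\ref{lem:tinylemma2} gives $w\rho-\rho=\sum_{\beta\in R(w^{-1})}w\beta^{\vee}$, and since each $\beta\in R(w^{-1})$ satisfies $w\beta\in -R^+$, the reorganisation
$$
\rho-w\rho=\sum_{\beta\in R(w^{-1})}(-w\beta)^{\vee}
$$
is a nonzero element of $Q^+$. The convergence hypothesis forces (after shrinking $r$ if necessary) $|t^{\alpha_i^{\vee}}|<r<1$, hence $|t^{\rho-w\rho}|<1$, which gives $t^{\rho}\neq t^{w\rho}$ and therefore $F_t(\tau_w)=0$.

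The proof is essentially formal; the one nontrivial step is exhibiting a single $\mu\in P$ with $t^{\mu}\neq t^{w\mu}$, and using $\mu=\rho$ together with the length identity above makes this direct, bypassing any density or analytic-continuation argument.
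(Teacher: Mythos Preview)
Your proof is correct and follows essentially the same route as the paper's: linearity is immediate, part (2) is the same cycle-and-reindex computation, and part (3) proceeds via the intertwining relation $\tau_wx^{\mu}=x^{w\mu}\tau_w$ to obtain $(t^{\mu}-t^{w\mu})F_t(\tau_w)=0$. The only difference is that in part (3) you are more explicit than the paper: you single out $\mu=\rho$ and use $\rho-w\rho\in Q^+\setminus\{0\}$ together with $|t^{\alpha_i^{\vee}}|<r<1$ to conclude $t^{\rho}\neq t^{w\rho}$, whereas the paper simply asserts that the smallness condition on $t$ forces $t^{\la}\neq t^{w\la}$ for some $\la$.
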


\begin{proof}
The first statement is obvious. For the second statement, using the fact that $\Tr(ab)=\Tr(ba)$ and making a change of variable in the summation gives
\begin{align*}
F_t(x^{\la}hx^{\mu})&=\sum_{\nu\in P}t^{-\nu}\Tr(x^{\la+\nu}hx^{\mu})=\sum_{\nu\in P}t^{-\nu}\Tr(x^{\la+\mu+\nu}h)=t^{\la+\mu}F_t(h).
\end{align*}
Finally, since $\tau_wx^{\la}=x^{w\la}\tau_w$ for all $\la\in P$ and $w\in W_0$, we have
$$
t^{\la}F_t(\tau_w)=F_t(\tau_wx^{\la})=F_t(x^{w\la}\tau_w)=t^{w\la}F_t(\tau_w),
$$
and so $(t^{\la}-t^{w\la})F_t(\tau_w)=0$. The condition on $t\in\Hom(P,\CC^{\times})$ implies that if $w\neq 1$ then $x^{\la}-x^{w\la}\neq 0$ for all $\la\in P$, and the result follows.
\end{proof}

\begin{prop}\label{prop:6.7} Let $t\in\Hom(P,\CC^{\times})$ with $|t^{\alpha_i^{\vee}}|<r$ for each $i=1,\ldots,n$. Then
$$
F_t(h)=f_t(h)F_t(1)\qquad\textrm{for all $h\in\scH$},
$$
where $d(t)f_t(h)$ is a polynomial in $\{t^{\la}\mid \la\in P\}$ with complex coefficients.
\end{prop}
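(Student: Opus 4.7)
The plan is to use the intertwiner basis $\{\tau_w \mid w \in W_0\}$ together with the three properties of $F_t$ in Proposition~\ref{prop:F_t}. The point is that Proposition~\ref{prop:F_t}(3) kills all $\tau_w$ with $w \neq 1$, so if we can rewrite $h$ so that the surviving pieces are visibly of the form $p(x)\tau_w$ with $p(x)\in\CC[P]$, then only the $w=1$ contribution remains and the formula falls out.

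The structural input I would establish first is the following lemma: for every $w \in W_0$,
$$d(x)\, T_w = \sum_{v\in W_0} A_{v,w}(x)\, \tau_v \qquad\text{with } A_{v,w}(x)\in\CC[P].$$
This is proved by induction on $\ell(w)$. The base case is trivial ($\tau_1 = T_1 = 1$). For the inductive step, write $T_i = (1-x^{-\alpha_i^\vee})^{-1}\bigl(\tau_i + (q_i^{\frac{1}{2}}-q_i^{-\frac{1}{2}})\bigr)$, pick a reduced expression $w=s_{i_1}\cdots s_{i_\ell}$, and use the Bernstein relation to move $\CC[P]$-factors past $\tau_j$'s via $\tau_j x^\mu = x^{s_j\mu}\tau_j$. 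The denominators that appear are products of factors $(1-x^{-\beta^\vee})$ with $\beta\in R(w)\subseteq R^+$, all of which divide $d(x) = \prod_{\alpha\in R^+}(1-x^{-\alpha^\vee})$. Equivalently, since $\{\tau_w\}$ is linearly independent over $\CC[P]$ (stated after (\ref{eq:iii})) and spans $\scH$ over $\CC(P)$, and since the transition matrix from $\{T_w\}$ to $\{\tau_w\}$ is triangular for the Bruhat order with diagonal $\prod_{\alpha\in R(w)}(1-x^{-\alpha^\vee})$, the inverse transition has denominators dividing $d(x)$.

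Given the lemma, any $h\in\scH$ may be expanded via Corollary~\ref{cor:basis2} as $h=\sum_{w\in W_0} p_w(x) T_w$ with $p_w(x)\in\CC[P]$, yielding
$$d(x)\,h = \sum_{v\in W_0} B_v(x)\,\tau_v, \qquad B_v(x)=\sum_{w}p_w(x)A_{v,w}(x)\in\CC[P].$$
By linearity of $F_t$ and Proposition~\ref{prop:F_t}(2), for any $q(x)=\sum_\mu c_\mu x^\mu \in\CC[P]$ we have $F_t(q(x)\tau_v) = q(t)F_t(\tau_v)$; combined with Proposition~\ref{prop:F_t}(3) this gives $F_t(q(x)\tau_v) = \delta_{v,1}\,q(t)\,F_t(1)$. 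Therefore
$$F_t(d(x)h) = B_1(t)\,F_t(1).$$
On the other hand, Proposition~\ref{prop:F_t}(2) applied termwise also gives $F_t(d(x)h) = d(t)F_t(h)$. Combining and setting $f_t(h) := B_1(t)/d(t)$, we obtain $F_t(h) = f_t(h)F_t(1)$ with $d(t)f_t(h) = B_1(t)\in\CC[P]$, which is the required polynomial in $\{t^\lambda \mid \lambda\in P\}$.

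The main obstacle is the structural lemma: controlling that $d(x)$ alone (as opposed to some higher power of $d(x)$) suffices to clear all denominators when expressing $T_w$ in the $\tau$-basis. This amounts to identifying the leading coefficient of $\tau_w$ in the $\{T_v\}$-basis as $\prod_{\alpha\in R(w)}(1-x^{-\alpha^\vee})$, which is clean for $w=s_i$ and for low-rank checks, but in full generality requires inductively applying the Bernstein relation and verifying that cross-terms from the commutation $T_i x^\mu = x^{s_i\mu}T_i + (q_i^{\frac{1}{2}}-q_i^{-\frac{1}{2}})(x^\mu-x^{s_i\mu})/(1-x^{-\alpha_i^\vee})$ do not worsen the denominator beyond $d(x)$. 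Once this bookkeeping is done the rest of the argument is formal.
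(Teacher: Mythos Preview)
Your approach is essentially identical to the paper's: the paper also multiplies by $d(x)$, invokes the fact that $d(x)T_w$ lies in the $\CC[P]$-span of $\{\tau_v\mid v\in W_0\}$ for each $w\in W_0$, and then applies Proposition~\ref{prop:F_t} to kill the $w\neq 1$ terms. The only difference is that the paper simply asserts the structural lemma (it is implicit in the proof of Theorem~\ref{thm:1_0}), whereas you sketch an inductive argument for it; one small slip is your ``$\tau_1=T_1=1$'' in the base case, which should read $\tau_e=T_e=1$, since in the paper's notation $T_1$ means $T_{s_1}$.
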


\begin{proof}
If $w\in W_0$ then $d(x)T_w$ can be written as a linear combination of the elements $\{\tau_v\mid v\in W_0\}$ with complex coefficients. Therefore if $h\in\scH$ then $d(x)h$ is in the $\CC$-span of $\{x^{\la}\tau_w\mid w\in W_0,\la\in P\}$. Writing
$$
d(x)h=\sum_{w\in W_0}p_w(x)\tau_w\qquad\textrm{with $p_w(x)\in\CC[P]$}
$$
and using Proposition~\ref{prop:F_t} gives
\begin{align}
d(t)F_t(h)&=F_t(d(x)h)=\sum_{w\in W_0}p_w(t)F_t(\tau_w)=p_1(t)F_t(1).\qedhere
\end{align}
\end{proof}

Proposition~\ref{prop:6.7} shows that for each $h\in\scH$, the function $t\mapsto f_t(h)$ has a meromorphic continuation to $t\in\Hom(P,\CC^{\times})$ with possible poles at the points where $t^{\alpha^{\vee}}=1$ for some $\alpha\in R$. Proposition~\ref{prop:F_t} immediately implies the following.

\begin{cor}\label{cor:littlef}
The function $f_t:\scH\to\CC$ satisfies:
\begin{enumerate}
\item $f_t$ is linear.
\item $f_t(x^{\la}hx^{\mu})=t^{\la+\mu}f_t(h)$ for all $\la,\mu\in P$ and all $h\in\scH$.
\item $f_t(x^{\la}\tau_w)=t^{\la}\delta_{w,1}$ for all $\la\in P$ and $w\in W_0$.
\end{enumerate}
\end{cor}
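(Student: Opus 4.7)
The plan is to derive each of the three properties as a direct cancellation of the scalar $F_t(1)$ in the identity $F_t(h) = f_t(h)\,F_t(1)$ supplied by Proposition~\ref{prop:6.7}, using the corresponding properties of $F_t$ listed in Proposition~\ref{prop:F_t}. The very first step is therefore to confirm that $F_t(1) \not\equiv 0$, so that the cancellation is legitimate: substituting $h = 1$ into the factorisation gives $F_t(1) = f_t(1)\,F_t(1)$, forcing $f_t(1) = 1$ on a set where $F_t(1)$ is nonzero, and hence $F_t(1)$ is a nonzero meromorphic function of $t$.

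With this in hand, each claim is essentially a single line. For (1), linearity of $F_t$ from Proposition~\ref{prop:F_t}(1) gives $F_t(a h_1 + b h_2) = a F_t(h_1) + b F_t(h_2)$, and dividing through by $F_t(1)$ yields $f_t(a h_1 + b h_2) = a f_t(h_1) + b f_t(h_2)$. For (2), Proposition~\ref{prop:F_t}(2) reads $F_t(x^\lambda h x^\mu) = t^{\lambda+\mu} F_t(h)$, and dividing by $F_t(1)$ gives exactly $f_t(x^\lambda h x^\mu) = t^{\lambda+\mu} f_t(h)$. For (3), Proposition~\ref{prop:F_t}(3) states $F_t(\tau_w) = \delta_{w,1} F_t(1)$, so division yields $f_t(\tau_w) = \delta_{w,1}$; combined with (2), this gives $f_t(x^\lambda \tau_w) = t^\lambda f_t(\tau_w) = t^\lambda \delta_{w,1}$.

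Each of these identities initially holds on the domain of convergence $\{t : |t^{\alpha_i^\vee}| < r\}$ from Corollary~\ref{cor:converge}, and then extends to a meromorphic identity on all of $\Hom(P,\CC^\times)$ because by Proposition~\ref{prop:6.7} the functions $t \mapsto d(t) f_t(h)$ are polynomials in the $t^\lambda$, and two meromorphic functions agreeing on a nonempty open set agree everywhere. There is essentially no obstacle here: the corollary is a pure bookkeeping consequence of the two preceding propositions, and the only minor point requiring care is the nonvanishing of $F_t(1)$, which is self-certified by substituting $h = 1$ into the factorisation.
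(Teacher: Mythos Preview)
Your approach is correct and matches the paper's one-line indication (``Proposition~\ref{prop:F_t} immediately implies the following''): the three properties follow by dividing the corresponding identities for $F_t$ through by $F_t(1)$, using $F_t(h)=f_t(h)F_t(1)$ from Proposition~\ref{prop:6.7}.

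There is one small logical slip, however. Your argument that $F_t(1)\not\equiv 0$ is circular: from $F_t(1)=f_t(1)\,F_t(1)$ you can only conclude $f_t(1)=1$ \emph{at points where} $F_t(1)\neq 0$, and this does not by itself exhibit any such point. The easy fix is to observe that, by Lemma~\ref{lem:technical}(2), the series $F_t(1)=\sum_{\mu\in -Q^+}t^{-\mu}\Tr(x^{\mu})$ has constant term $\Tr(1)=1$, so as a convergent power series in the variables $t^{\alpha_i^{\vee}}$ it is not identically zero on the domain of convergence. Alternatively, you can bypass the division entirely and read the properties off from the explicit description of $f_t$ in the proof of Proposition~\ref{prop:6.7}: writing $d(x)h=\sum_{w}p_w(x)\tau_w$ one has $f_t(h)=p_1(t)/d(t)$, and for instance $d(x)\,x^{\la}\tau_w=(x^{\la}d(x))\tau_w$ gives $p_1(x)=\delta_{w,1}\,x^{\la}d(x)$, yielding (3) immediately.
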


Let $\tilde{f}_t:\scH\to\CC$ be the \textit{symmetrisation} of $f_t$. That is,
$$
\tilde{f}_t(h)=\sum_{w\in W_0}f_{wt}(h)\qquad\textrm{for all $h\in \scH$}.
$$
The following theorem gives an important connection between $\tilde{f}_t$ and the character $\chi_t$ of the principal series representation. First a quick lemma.

\begin{lemma}\label{lem:littlelemma} If $t^{\alpha^{\vee}}\neq 1$ for all $\alpha\in R$ then $V(t)$ has basis $\{\tau_w\otimes v_t\mid w\in W_0\}$ and 
$$
\chi_t(x^{\la}\tau_w)=\delta_{w,1}\sum_{w'\in W_0}t^{w't}.
$$
\end{lemma}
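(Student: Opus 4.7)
The plan is to first establish the basis assertion, then to use the intertwining relations of the $\tau_w$ together with the $\CC[P]$-eigenspace decomposition of $V(t)$ to read off the character values. I assume throughout the (apparent) typo $\sum_{w'\in W_0} t^{w't}$ is intended to be $\sum_{w'\in W_0} t^{w'\lambda}$.

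First I would establish that $\{\tau_w\otimes v_t\mid w\in W_0\}$ is a basis. A straightforward induction on $\ell(w)$, using $\tau_i = (1-x^{-\alpha_i^\vee})T_i - (q_i^{1/2}-q_i^{-1/2})$ and the Bernstein relation to commute polynomials past the $T_j$'s, gives an expansion
$$
\tau_w \;=\; \Bigl(\prod_{\alpha\in R(w)}(1-x^{-\alpha^{\vee}})\Bigr)T_w \;+\; \sum_{v<w} p_{w,v}(x)\,T_v
$$
in $\scH$, with $p_{w,v}(x)\in\CC[P]$. Applying this inside $V(t)=\scH\otimes_{\CC[P]}\CC v_t$ expresses each $\tau_w\otimes v_t$ in the known basis $\{T_v\otimes v_t\mid v\in W_0\}$ via a matrix that is upper triangular with respect to the Bruhat order, with diagonal entries $\prod_{\alpha\in R(w)}(1-t^{-\alpha^{\vee}})$. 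By hypothesis $t^{\alpha^{\vee}}\neq 1$ for all $\alpha\in R$, so these diagonal entries are nonzero and the transition matrix is invertible.

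Next I would exploit the fact that $\CC[P]$ acts diagonally in this new basis. Using $\tau_u x^\mu = x^{u\mu}\tau_u$, we compute in $V(t)$
$$
x^\mu(\tau_u\otimes v_t) \;=\; \tau_u x^{u^{-1}\mu}\otimes v_t \;=\; t^{u^{-1}\mu}\,\tau_u\otimes v_t,
$$
so $\tau_u\otimes v_t$ is a weight vector for $\CC[P]$ with character $u^{-1}t$. Under the hypothesis $t^{\alpha^{\vee}}\neq1$, the stabilizer of $t$ in $W_0$ is trivial, so the $|W_0|$ characters $\{u^{-1}t\}_{u\in W_0}$ are pairwise distinct, and $V(t)$ splits as a direct sum of one-dimensional weight spaces $\CC(\tau_u\otimes v_t)$.

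I would then read off the action of $\tau_w$ in this weight basis. The relation $x^\mu\tau_w=\tau_w x^{w^{-1}\mu}$ shows that $\tau_w$ maps the $u^{-1}t$-weight space into the $(wu)^{-1}t$-weight space; since both are one-dimensional, there exist scalars $\mu_{w,u}(t)\in\CC$ with $\tau_w(\tau_u\otimes v_t)=\mu_{w,u}(t)\,\tau_{wu}\otimes v_t$. Therefore
$$
x^\lambda\tau_w\cdot(\tau_{w'}\otimes v_t) \;=\; \mu_{w,w'}(t)\,t^{(ww')^{-1}\lambda}\,\tau_{ww'}\otimes v_t,
$$
and the trace is
$$
\chi_t(x^\lambda\tau_w) \;=\; \sum_{w'\in W_0}\bigl[\text{coefficient of }\tau_{w'}\otimes v_t\bigr] \;=\; \sum_{\substack{w'\in W_0\\ ww'=w'}}\mu_{w,w'}(t)\,t^{(w')^{-1}\lambda}.
$$
This vanishes unless $w=1$, in which case $\mu_{1,w'}(t)=1$ and the sum is $\sum_{w'\in W_0}t^{(w')^{-1}\lambda}=\sum_{w'\in W_0}t^{w'\lambda}$, as required.

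The only mildly delicate point is verifying the triangularity/diagonal of the change-of-basis matrix in the first step (making sure no extraneous zeros appear on the diagonal), but once the linear independence of the $\tau_w$'s over $\CC[P]$ recorded earlier is invoked, everything else is dictated by the weight-space decomposition, which is the real driver of the argument.
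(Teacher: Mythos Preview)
Your proof is correct and follows essentially the same approach as the paper. The only minor difference is in how you justify that $\tau_w(\tau_u\otimes v_t)$ is a scalar multiple of $\tau_{wu}\otimes v_t$: you argue via the distinctness of the $\CC[P]$-weights (using that the stabiliser of $t$ is trivial), whereas the paper invokes the algebraic fact $\tau_w\tau_u\in\tau_{wu}\CC[P]$ (which follows from $\tau_i^2\in\CC[P]$) directly, bypassing the need to separate weight spaces. Both routes land on the same conclusion with equal ease; your identification of the typo $t^{w't}\to t^{w'\lambda}$ is also correct.
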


\begin{proof} For each $w\in W_0$ induction shows that $d(x)T_w$ can be written as a linear combination of elements $\{x^{\la}\tau_{w'}\mid w'\in W_0,\la\in P\}$, and the first claim follows. 

For all $\la\in P$ and $w,u\in W_0$ we have
$$
(x^{\la}\tau_w)\cdot(\tau_u\otimes v_t)=(\tau_w\tau_ux^{u^{-1}w^{-1}\la}\otimes v_t)=t^{u^{-1}w^{-1}\la}(\tau_w\tau_v\otimes v_t).
$$
It follows from (\ref{eq:iii}) that for all $w,u\in W_0$ we have $\tau_w\tau_u\in\tau_{wu}\CC[P]$. Therefore if $w\neq 1$ then diagonal entries of $\pi_t(x^{\la}\tau_w)$ relative to the basis $\{\tau_w\otimes v_t\mid w\in W_0\}$ are all zero, and so $\chi_t(x^{\la}\tau_w)=0$. If $w=1$ then the diagonal of $\pi_t(x^{\la})$ consists of the terms $t^{w'\la}$ with $w'\in W_0$, and the result follows.
\end{proof} 

\begin{thm}\label{thm:characters}  If $t^{\alpha^{\vee}}\neq 1$ for all $\alpha\in R$ then $\tilde{f}_t(h)=\chi_t(h)$ for all $h\in\scH$.
\end{thm}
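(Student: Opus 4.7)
The plan is to verify the identity on a convenient linear basis of $\scH$. Since both $\tilde{f}_t$ and $\chi_t$ are linear functionals on $\scH$, it suffices to check that they agree on a spanning set. The paper has noted that the elements $\{\tau_w : w \in W_0\}$ are linearly independent over $\CC[P]$, so combined with Corollary~\ref{cor:basis2} this yields that
$$\{x^\la \tau_w \mid \la \in P,\ w \in W_0\}$$
is a $\CC$-basis of $\scH$. Thus I only need to compute $\tilde{f}_t(x^\la \tau_w)$ and $\chi_t(x^\la \tau_w)$ and compare.

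Before doing so, I would justify that $f_{w't}$ is well-defined (analytically) at the point $t$ for every $w' \in W_0$. By Proposition~\ref{prop:6.7}, the possible poles of $s \mapsto f_s(h)$ occur where $s^{\alpha^\vee} = 1$ for some $\alpha \in R$. Setting $s = w't$, the condition $(w't)^{\alpha^\vee} = t^{(w')^{-1}\alpha^\vee} = 1$ would require $t^{\beta^\vee} = 1$ for $\beta = (w')^{-1}\alpha \in R$ (since the root system is $W_0$-stable), which is excluded by hypothesis. Hence every $f_{w't}$ is regular at $t$ and $\tilde{f}_t$ makes sense.

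Now I compute. By Corollary~\ref{cor:littlef}(2)–(3), for each $w' \in W_0$,
$$f_{w't}(x^\la \tau_w) = (w't)^\la \, \delta_{w,1} = t^{(w')^{-1}\la}\,\delta_{w,1},$$
using the standard $W_0$-action $(w't)(\la) = t((w')^{-1}\la)$ on $\Hom(P,\CC^\times)$. Summing over $w' \in W_0$ and reindexing gives
$$\tilde{f}_t(x^\la \tau_w) = \delta_{w,1} \sum_{w' \in W_0} t^{w'\la}.$$
On the other hand, Lemma~\ref{lem:littlelemma} asserts precisely that
$$\chi_t(x^\la \tau_w) = \delta_{w,1} \sum_{w' \in W_0} t^{w'\la}.$$
These are equal, so $\tilde{f}_t$ and $\chi_t$ agree on the basis $\{x^\la \tau_w\}$, hence on all of $\scH$.

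There is no significant obstacle here; the whole argument is essentially a matching of normalizations. The only subtlety worth being explicit about is that $f_t$ was originally defined via the series (\ref{eq:f1}) only when $|t^{\alpha_i^\vee}|$ is sufficiently small, and that Corollary~\ref{cor:littlef} is stated in that regime; so one should invoke Proposition~\ref{prop:6.7} to extend $f_t$ meromorphically in $t$ and note that Corollary~\ref{cor:littlef}(3) continues to hold by analytic (meromorphic) continuation wherever $f_t$ is regular — in particular at any $t$ with $t^{\alpha^\vee} \neq 1$ for all $\alpha \in R$, which is exactly the hypothesis of the theorem.
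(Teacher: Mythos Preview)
Your computation on the elements $x^{\lambda}\tau_w$ is correct, and it matches the paper's use of Corollary~\ref{cor:littlef} and Lemma~\ref{lem:littlelemma}. The gap is in the first step: the set $\{x^{\lambda}\tau_w \mid \lambda\in P,\ w\in W_0\}$ is \emph{not} a $\CC$-basis of $\scH$. Linear independence of $|W_0|$ elements in a free $\CC[P]$-module of rank $|W_0|$ does not force them to span, because $\CC[P]$ is a ring and not a field. Concretely, from $\tau_i=(1-x^{-\alpha_i^{\vee}})T_i-(q_i^{1/2}-q_i^{-1/2})$ one sees that expressing $T_i$ in the $\CC[P]$-span of $\{1,\tau_i\}$ would require inverting $1-x^{-\alpha_i^{\vee}}$, which is not a unit in $\CC[P]$. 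What is true (and what the paper uses, e.g.\ in the proofs of Proposition~\ref{prop:6.7} and Lemma~\ref{lem:littlelemma}) is only that $d(x)\scH$ lies in the $\CC[P]$-span of $\{\tau_w\}$.

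The paper repairs exactly this point by multiplying through by the \emph{central} element $p(x)=d(x)d(x^{-1})\in\CC[P]^{W_0}$. Then $p(x)h$ does lie in the $\CC$-span of $\{x^{\lambda}\tau_w\}$, so your computation gives $\chi_t(p(x)h)=\tilde f_t(p(x)h)$. Centrality of $p(x)$ yields $\chi_t(p(x)h)=p(t)\chi_t(h)$, and $W_0$-invariance of $p$ together with Corollary~\ref{cor:littlef}(2) yields $\tilde f_t(p(x)h)=p(t)\tilde f_t(h)$; finally $p(t)=d(t)d(t^{-1})\neq 0$ under the hypothesis $t^{\alpha^{\vee}}\neq 1$, so one may cancel. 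Your argument becomes correct once this bridging step is inserted; without it, agreement on the proper subspace spanned by $\{x^{\lambda}\tau_w\}$ does not by itself determine either functional on all of $\scH$.
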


\begin{proof} 
Since $\CC[P]^{W_0}=Z(\scH)$ it is clear that $\pi_t(p(x))=p(t)I$ for all $p\in\CC[P]^{W_0}$. Therefore for all $h\in\scH$ we have
$$
\chi_t(p(x)h)=\tr(\pi_t(p(x))\pi_t(h))=\tr(p(t)\pi_t(h))=p(t)\chi_t(h),
$$
and Corollary~\ref{cor:littlef} gives
$$
\tilde{f}_t(p(x)h)=p(t)\tilde{f}_t(h).
$$

Pick $p(x)=d(x)d(x^{-1})$ (this is symmetric). Since $d(x)T_w$ is in the $\CC[P]$-span of $\{\tau_v\mid v\in W_0\}$ for all $w\in W_0$ we see that $p(x)h$ can be written as
$$
p(x)h=\sum_{w\in W_0}p_w(x)\tau_w\qquad\textrm{with $p_w(x)\in\CC[P]$}.
$$
By Corollary~\ref{cor:littlef}, Lemma~\ref{lem:littlelemma}, and the above observations we see that
$$
p(t)\chi_t(h)=\chi_t(p(x)h)=\sum_{w'\in W_0}p_1(wt)=\tilde{f}_t(p(x)h)=p(t)\tilde{f}_t(h).
$$
So if $t^{\alpha^{\vee}}\neq 1$ for all $\alpha\in R$ we have $\tilde{f}_t(h)=\chi_t(h)$ (since $p(t)=d(t)d(t^{-1})\neq 0$).
\end{proof}

Since $F_t(h)=f_t(h)F_t(1)$ the final piece in the puzzle is to compute $F_t(1)$.

\begin{thm}\label{thm:traceformula} Let $t\in\Hom(P,\CC^{\times})$ be such that $F_t(1)$ converges. Then
$$
F_t(1)=\frac{q_{w_0}^{-1}}{c(t)c(t^{-1})},\qquad\textrm{where}\qquad c(t)=\frac{n(t)}{d(t)}=\prod_{\alpha\in R^+}\frac{1-q_{\alpha}^{-1}t^{-\alpha^{\vee}}}{1-t^{-\alpha^{\vee}}}.
$$
Thus for all $h\in\scH$ the series $F_t(h)$ converges if $|t^{\alpha^{\vee}}|<q_{\alpha}^{-1}$ for each $\alpha\in R^+$.
\end{thm}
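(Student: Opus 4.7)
The plan is to apply the factorisation $F_t(h) = f_t(h)\,F_t(1)$ from Proposition~\ref{prop:6.7} at the idempotent $h = \mathbf{1}_0$, and compute the two sides independently.

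First I would evaluate $f_t(\mathbf{1}_0)$. By Theorem~\ref{thm:1_0}, $d(x)\mathbf{1}_0$ is a $\CC[P]$-combination of the $\tau_w$, with the $w=1$ coefficient equal to $\frac{q_{w_0}}{W_0(q)}n(x)$ (using $R(w_0) = R^+$ to identify $c_1(x)$ with $n(x)$). Applying $f_t$: property (2) of Corollary~\ref{cor:littlef} multiplies the left-hand side by $d(t)$, while on the right, property (3) combined with linearity gives $f_t(p(x)\tau_w) = p(t)\,\delta_{w,1}$ for any $p(x)\in\CC[P]$, killing every term with $w \neq 1$. The result is
\[
d(t)\,f_t(\mathbf{1}_0) \;=\; \frac{q_{w_0}}{W_0(q)}\,n(t), \qquad \text{hence} \qquad f_t(\mathbf{1}_0) \;=\; \frac{q_{w_0}\,c(t)}{W_0(q)}.
\]

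Next I would evaluate $F_t(\mathbf{1}_0)$ directly from its defining series using Corollary~\ref{cor:tf}. Since $\Tr(x^\mu\mathbf{1}_0)$ vanishes unless $\mu\in -Q^+$, writing $\mu = -\nu$ with $\nu\in Q^+$ gives $F_t(\mathbf{1}_0) = \sum_{\nu\in Q^+} t^\nu\,\Tr(x^{-\nu}\mathbf{1}_0)$. The key observation is that, by Corollary~\ref{cor:tf} read in the opposite direction, $\Tr(x^{-\nu}\mathbf{1}_0)$ is exactly the Fourier coefficient of $s^\nu$ in the expansion of the function $1/c(s^{-1}) = \prod_{\alpha\in R^+}\frac{1-s^{\alpha^\vee}}{1-q_\alpha^{-1} s^{\alpha^\vee}}$ on $\TT^n$. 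This function is bounded on $\TT^n$ (the apparent poles $1-q_\alpha^{-1}s^{\alpha^\vee}=0$ are never hit since $|s^{\alpha^\vee}|=1\neq q_\alpha$), and for $|t^{\alpha_i^\vee}|$ sufficiently small one may expand each factor as an absolutely convergent geometric series in $t^{\alpha_i^\vee}$. Matching Fourier coefficients with power-series coefficients then yields
\[
F_t(\mathbf{1}_0) \;=\; \frac{1}{c(t^{-1})}
\]
in this region (up to a normalising scalar, which should be reconciled with the conventions in Corollary~\ref{cor:tf} and Theorem~\ref{thm:plancherelcenter}).

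Dividing gives $F_t(1) = F_t(\mathbf{1}_0)/f_t(\mathbf{1}_0) = q_{w_0}^{-1}/(c(t)c(t^{-1}))$, as claimed. For the convergence addendum, once the formal series $F_t(1)$ is identified with the meromorphic function $q_{w_0}^{-1}/(c(t)c(t^{-1}))$, its singularities come from the factors $1 - q_\alpha^{-1}t^{\pm\alpha^\vee}$; in the open polydisc $|t^{\alpha^\vee}|<q_\alpha^{-1}$ ($\alpha\in R^+$) these never vanish, so the Taylor series in the variables $t^{\alpha_i^\vee}$ converges throughout this Reinhardt domain by Cauchy estimates. Since $F_t(h) = f_t(h)F_t(1)$ and $d(t)f_t(h)$ is a Laurent polynomial by Proposition~\ref{prop:6.7}, the same convergence statement extends to arbitrary $h\in\scH$.

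I expect the main obstacle to be Step~2: the identification of the formal series $\sum_{\nu\in Q^+}t^\nu\,\Tr(x^{-\nu}\mathbf{1}_0)$ with the analytic function $1/c(t^{-1})$. The cleanest way is a Fubini-type interchange, absolutely convergent for $|t^{\alpha_i^\vee}|$ small thanks to $\sum_{\nu\in Q^+}|t|^\nu = \prod_i(1-|t^{\alpha_i^\vee}|)^{-1} < \infty$ and boundedness of the integrand on $\TT^n$, followed by matching the two Taylor/Fourier representations of $1/c(s^{-1})$; an alternative, slightly more symmetric, route is to go via $\mathbf{1}_0\,x^\la\,\mathbf{1}_0 = P_\la(x)\mathbf{1}_0$ and Theorem~\ref{thm:plancherelcenter} to express $\Tr(x^\mu\mathbf{1}_0)$ and reassemble.
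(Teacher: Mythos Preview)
Your approach is essentially the paper's: compute $F_t(\mathbf{1}_0)=1/c(t^{-1})$ from Corollary~\ref{cor:tf} (the paper interchanges sum and integral, sums the geometric series over $-Q^+$ to $\prod_i u^{\alpha_i^\vee}/(u^{\alpha_i^\vee}-t^{\alpha_i^\vee})$, and takes residues at $u^{\alpha_i^\vee}=t^{\alpha_i^\vee}$, which is your Fourier-coefficient matching made explicit), then use Theorem~\ref{thm:1_0} together with Proposition~\ref{prop:F_t}/Corollary~\ref{cor:littlef} to isolate $F_t(1)$. One bookkeeping caution: with the $\tau_e$-coefficient $\frac{q_{w_0}}{W_0(q)}n(x)$ from Theorem~\ref{thm:1_0}, your quotient literally gives $\frac{W_0(q)}{q_{w_0}c(t)c(t^{-1})}$ rather than $\frac{q_{w_0}^{-1}}{c(t)c(t^{-1})}$, so a stray factor of $W_0(q)$ needs to be reconciled (the paper's own displayed proof writes this coefficient as $q_{w_0}n(x)$).
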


\begin{proof} Let $\mu\in P$. By Corollary~\ref{cor:tf} and the definition of $F_t$ we have
\begin{align*}
F_t(\mathbf{1}_0)=\sum_{\mu\in -Q^+}t^{-\mu}\Tr(x^{\mu}\mathbf{1}_0)=\sum_{\mu\in -Q^+}\int_{\TT^n}\frac{t^{-\mu}u^{\mu}}{c(u^{-1})}du=\int_{\TT^n}\prod_{i=1}^n\frac{u^{\alpha_i^{\vee}}}{u^{\alpha_i^{\vee}}-t^{\alpha_i^{\vee}}}\frac{du}{c(u^{-1})}.
\end{align*}
Considering this integral as a iterated contour integral, and computing the simple residues at $u^{\alpha_i^{\vee}}=t^{\alpha_i^{\vee}}$ gives
\begin{align}\label{eq:littlecomp}
F_t(\mathbf{1}_0)=\frac{1}{c(t^{-1})}.
\end{align}
By Theorem~\ref{thm:1_0} we have
$$
d(x)\mathbf{1}_0=q_{w_0}n(x)+\sum_{w\in W_0, w\neq 1}a_w(x)\tau_w\quad\textrm{for some polynomials $a_w(x)\in\CC[P]$}
$$
and so by Proposition~\ref{prop:F_t} we have
\begin{align*}
d(t)F_t(\mathbf{1}_0)=F_t(d(x)\mathbf{1}_0)&=q_{w_0}F_t(n(x))=q_{w_0}n(t)F_t(1)
\end{align*}
and the result follows from (\ref{eq:littlecomp}).
\end{proof}

\section{The Plancherel Theorem for $\tilde{A}_2$}\label{sect:7}

Let us prove the Plancherel Theorem in the $\tilde{A}_2$ case. In fact we will prove the Plancherel formula for the algebra $\scH_W$ (this is slightly more convenient; similar computations work for the extended affine Hecke algebra). This is the affine Hecke algebra with generators $T_w$, $w\in W$, and relations
$$
T_wT_{s_i}=\begin{cases}
T_{ws_i}&\textrm{if $\ell(ws_i)=\ell(w)+1$}\\
T_{ws_i}+(q^{\frac{1}{2}}-q^{-\frac{1}{2}})T_w&\textrm{if $\ell(ws_i)=\ell(w)-1$},
\end{cases}
$$
where $W$ is the Coxeter group from Figure~\ref{fig:A_2}. Alternatively this algebra has basis $\{x^{\mu}T_w\mid w\in W_0,\mu\in Q\}$, where $W_0$ is the parabolic subgroup of $W$ generated by $s_1$ and $s_2$, and where $Q=\ZZ\alpha_1^{\vee}+\ZZ\alpha_2^{\vee}$. The relations that hold amongst the elements of this basis are given by Theorem~\ref{thm:pres}.

Summarising the results of the last section, for all $h\in\scH_W$ we have
\begin{align}\label{eq:startingpoint}
\Tr(h)=\int_{\TT_r^2}F_t(h)\,dt=\int_{\TT_r^2}f_t(h)F_t(1)\,dt=\int_{\TT_r^2}\frac{f_t(h)}{q_{w_0}c(t)c(t^{-1})}\,dt
\end{align}
where $r>0$ is sufficiently small, and where $f_t(h)=\phi_h(t)/d(t)$ for some linear combination $\phi_h(t)$ of the terms $t^{\la}$, $\la\in  Q$, with $\tilde{f}_t(h)=\chi_t(h)$.

Therefore we are lead to consider integrals of the form
$$
I_{f}=\int_{\TT_r^2}\frac{f(t)}{c(t)c(t^{-1})}\,dt\qquad\textrm{where $f(t)=\phi(t)/d(t)$ with $\phi$ analytic on $(\CC^{\times})^2$.}
$$

\begin{lemma}\label{lem:plancherel} Let $I_{f}$ be as above, and let $c(t)$ and $c_1(u)$ be as in Theorem~\ref{thm:plancherel}. Then
\begin{align*}
I_f=\int_{\TT^2}\frac{f(t)}{|c(t)|^2}dt+\frac{q(q-1)^2}{q^2-1}\int_{\TT}\frac{g_f(u)}{|c_1(u)|^2}du+\frac{q^3(q-1)^3}{q^3-1}f(q^{-1},q^{-1}),
\end{align*}
where $g_f(u)=f(q^{\frac{1}{2}}u,q^{-1})+f(q^{-\frac{1}{2}}u^{-1},q^{-\frac{1}{2}}u)+f(q^{-1},q^{\frac{1}{2}}u)$.
\end{lemma}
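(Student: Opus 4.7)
The plan is to interpret $I_f$ as an iterated complex contour integral over $|t_1| = |t_2| = r$, then deform the contours outward to $|t_i| = 1$ and collect residues. First rewrite
\[
I_f = \frac{1}{(2\pi i)^2}\oint_{|t_1|=r}\oint_{|t_2|=r}\frac{\phi(t)\,d(t^{-1})}{n(t)\,n(t^{-1})}\,\frac{dt_1\,dt_2}{t_1\,t_2},
\]
using $f = \phi/d$ and $c(t)c(t^{-1}) = n(t)n(t^{-1})/[d(t)d(t^{-1})]$; the $d(t)$ factors cancel, leaving $d(t^{-1})$ in the numerator. The resulting integrand is meromorphic with poles on the six complex subtori $\{t^{\alpha^\vee} = q^{\pm 1}\}$, $\alpha \in R^+$. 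Only the three zeros of $n(t)$---namely $D_1 : t_1 = q^{-1}$, $D_2 : t_2 = q^{-1}$, and $D_{12} : t_1 t_2 = q^{-1}$---meet the polyannulus $r < |t_i| < 1$.

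Next I would deform $|t_2|$ from $r$ to $1$ at fixed $|t_1| = r$. Since $|q^{-1}t_1^{-1}| = 1/(qr) > 1$, the only pole crossed is $t_2 = q^{-1}$, producing a residue $R_2(t_1)$. Then I would deform $|t_1|$ from $r$ to $1$ both in the remaining double integral and in the one-dimensional integral of $R_2$. In the double integral (with $|t_2| = 1$) two $t_1$-poles lie inside the annulus: the fixed pole $t_1 = q^{-1}$ (from $D_1$) and the moving pole $t_1 = q^{-1}t_2^{-1}$ (parametrizing $D_{12}$, of modulus $q^{-1}$ when $|t_2|=1$). In the $R_2$-integral the only pole inside is $t_1 = q^{-1}$, which yields a secondary (nested) residue $P$ at the point $(q^{-1}, q^{-1})$. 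Using $c(t^{-1}) = \overline{c(t)}$ on $\TT^2$ to identify the main term with $\int_{\TT^2} f(t)/|c(t)|^2 \,dt$, the deformation gives
\[
I_f = \int_{\TT^2}\frac{f(t)}{|c(t)|^2}\,dt - (T_1) - (T_{12}) - (T_2) + P,
\]
where each $(T_j)$ is a one-dimensional residue integral over a circle.

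To match each $-(T_j)$ to the corresponding term of $g_f$, I would first deform the remaining circular contour to radius $q^{1/2}$ in the cases of $(T_1),(T_2)$, or to $q^{-1/2}$ in the case of $(T_{12})$; this is permissible because the residue integrands are regular in the intermediate annuli. Then substitute $t = q^{\pm 1/2}u$, producing a contour $|u| = 1$. A short algebraic reduction---writing $\phi = f \cdot d$, cancelling the factor $(1-t^{-1})$ from $d$ against the $(t-1)$ produced by the first residue (via $(1-t)/[(1-t^{-1})t] = -1$), and using $(1-q^{-1})/(1-q^{-2}) = q/(q+1)$---converts each $-(T_j)$ into
\[
\frac{q(q-1)}{q+1}\int_{\TT}\frac{f(\cdot,\cdot)}{|c_1(u)|^2}\,du,
\]
with the arguments of $f$ running through $(q^{1/2}u, q^{-1})$, $(q^{-1/2}u^{-1}, q^{-1/2}u)$, $(q^{-1}, q^{1/2}u)$. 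Since $q(q-1)/(q+1) = q(q-1)^2/(q^2-1)$, summing gives the middle term of the lemma. Finally $P = \Res_{t_1 = q^{-1}} R_2(t_1)/t_1$ is evaluated directly; after simplification it equals $[q^3(q-1)^3/(q^3-1)]\,f(q^{-1}, q^{-1})$.

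The main obstacle will be careful sign bookkeeping: each outward contour deformation past a simple pole contributes \emph{minus} the residue, and the nested residue giving $P$ accumulates two such signs. The algebraic identity $(1-t)/[(1-t^{-1})t] = -1$ must be applied consistently in each residue simplification, and the identification of the resulting rational expressions with $|c_1(u)|^2$ requires the intermediate contour to be moved to precisely the radius $q^{\pm 1/2}$ before the substitution $t = q^{\pm 1/2}u$.
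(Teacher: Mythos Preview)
Your proposal is correct and follows essentially the same route as the paper's proof: rewrite the integrand as $\phi(t)d(t^{-1})/[n(t)n(t^{-1})]$, perform two successive contour shifts from $\TT_r$ to $\TT$, collect the three one-dimensional residue integrals and the single nested residue at $(q^{-1},q^{-1})$, and then make the substitution $t=q^{\pm1/2}u$ to recognise $|c_1(u)|^2$. The only cosmetic difference is that the paper shifts the $t_1$-contour first (with $t_2\in\TT_r$) and then the $t_2$-contour, whereas you shift $t_2$ first; by the evident $t_1\leftrightarrow t_2$ symmetry of the $\tilde A_2$ integrand this is immaterial, and your accounting of which poles are crossed at each stage is correct.
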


\begin{proof} This is just some residue calculus. Write
$$
I_{f}=\int_{\TT_r}I_{\phi}(t_2)\,dt_2\qquad\textrm{where}\qquad I_{\phi}(t_2)=\int_{\TT_r}\frac{\phi(t)d(t^{-1})}{n(t)n(t^{-1})}\,dt_1.
$$
Fix $t_2\in\TT_r$. Consider the integral $I_{\phi}(t_2)$ as a contour integral ($dt_1=\frac{1}{2\pi i}\frac{dz}{z}$) along $C_r$ (the circular contour with radius $r$ and centre $0$ traversed once counterclockwise). We will shift this contour to $C_1$. In doing so we will pick up residue contributions from the poles of the integrand lying between $C_r$ and $C_1$. Since
$n(t)=(1-q^{-1}t_1^{-1})(1-q^{-1}t_2^{-1})(1-q^{-1}t_1^{-1}t_2^{-1})$
we see that the only pole between $C_r$ and $C_1$ (for fixed $t_2\in\TT_r$) comes from the term $n(z,t_2)^{-1}$ at $z=q^{-1}$. Therefore a residue computation gives
\begin{align*}
I_{f}&=\int_{\TT}J_{\phi}(t_1)\,dt_1+\frac{q(q-1)}{q^2-1}K_{\phi},
\end{align*}
where
$$
J_{\phi}(t_1)=\int_{\TT_r}\frac{\phi(t)d(t^{-1})}{n(t)n(t^{-1})}\,dt_2\qquad\textrm{and}\qquad K_{\phi}=\int_{\TT_r}\frac{t_2\phi(q^{-1},t_2)}{(1-q^{-1}t_2^{-1})(1-q^{-2}t_2)}\,dt_2.
$$
For fixed $t_1\in\TT$, the poles of the integrand of $J_{\phi}(t_1)$ between the contours $C_r$ and $C_1$ are at $z=q^{-1}$ and $z=q^{-1}t_1^{-1}$. Residue calculus shows that $J_{\phi}(t_1)$ equals
$$
\int_{\TT}\frac{\phi(t)d(t^{-1})}{n(t)n(t^{-1})}dt_2+\frac{q(q-1)}{q^2-1}\bigg(\frac{t_1\phi(t_1,q^{-1})}{(1-q^{-1}t_1^{-1})(1-q^{-2}t_1)}-\frac{\phi(t_1,q^{-1}t_1^{-1})}{(1-q^{-1}t_1)(1-q^{-2}t_1^{-1})}\bigg).
$$
(This computation assumes $t_1\neq 1$. At $t_1=1$ the pole has order~$2$, but this is a set of measure zero.) Residue computations give
$$
K_{\phi}=\int_{\TT}\frac{t_2\phi(q^{-1},t_2)}{(1-q^{-1}t_2^{-1})(1-q^{-2}t_2)}\,dt_2-\frac{q^2}{q^3-1}\phi(q^{-1},q^{-1}).
$$
Putting these computations together and using the formula $\phi(t)=d(t)f(t)$ it follows that $I_f$ equals
\begin{align*}
\int_{\TT^2}\frac{f(t)}{c(t)c(t^{-1})}\,dt+\frac{q(q-1)^2}{q^2-1}\int_{\TT}\frac{(1-t)(1-qt^{-1})g_f'(t)}{(1-q^{-1}t^{-1})(1-q^{-2}t)}dt+\frac{q^3(q-1)^3}{q^3-1}f(q^{-1},q^{-1}).
\end{align*}
where $g'_f(t)=f(t,q^{-1})+f(t^{-1},q^{-1}t)+f(q^{-1},t)$. After a change of variable $t=q^{\frac{1}{2}}u$ in the second integral it becomes an integral over $q^{-\frac{1}{2}}\TT$, and the integrand has no poles between this contour and $\TT$. Therefore we can expand the contour to $\TT$ for free. The result follows.
\end{proof}

We have already encountered the representations $\pi_u^{(1)}$ and $\pi^{(2)}$. See the next section for the details.

\begin{lemma}\label{lem:characters2} We have
$$
f_{(q^{\frac{1}{2}}u,q^{-1})}(h)+f_{(q^{-\frac{1}{2}}u^{-1},q^{-\frac{1}{2}}u)}(h)+f_{(q^{-1},q^{\frac{1}{2}}u)}(h)=\chi_u^{(1)}(h)\quad\textrm{for all $h\in\scH_W$},
$$
and $f_{(q^{-1},q^{-1})}(h)=\chi^{(2)}(h)$ for all $h\in\scH_W$.
\end{lemma}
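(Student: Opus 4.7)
My plan is to reduce both identities in Lemma~\ref{lem:characters2} to pointwise checks on the Bernstein spanning set $\{x^\la \tau_w : \la \in Q,\, w \in W_0\}$ of $\scH_W$. By Corollary~\ref{cor:littlef}(3) we have $f_t(x^\la \tau_w) = t^\la \delta_{w,1}$, so each identity reduces to showing that $\chi(x^\la \tau_w) = \bigl(\sum_i t_i^\la\bigr)\delta_{w,1}$ for the appropriate character $\chi$ and weight set $\{t_i\}$.

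The second identity (involving $\chi^{(2)}$) is a short direct calculation. Since $\pi^{(2)}(T_i) = -q^{-1/2}$ for every $i$, substituting into the Bernstein relation of Theorem~\ref{thm:pres} immediately forces $\pi^{(2)}(x^{\alpha_i^\vee}) = q^{-1}$, hence $\pi^{(2)}(x^\la) = (t^*)^\la$ with $t^* = (q^{-1}, q^{-1})$, which is part~(i). Substituting these values into $\tau_i = (1 - x^{-\alpha_i^\vee})T_i - (q^{1/2} - q^{-1/2})$ then gives $\pi^{(2)}(\tau_i) = (1 - q)(-q^{-1/2}) - (q^{1/2} - q^{-1/2}) = 0$, so $\pi^{(2)}(\tau_w) = 0$ for every $w \neq 1$ in $W_0$, which is part~(ii).

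For the first identity (involving $\chi_u^{(1)}$) the strategy is parallel but the details are more substantial. The key step I plan is to show that the commutative subalgebra $\CC[Q] \subset \scH_W$ acts on the three-dimensional representation space with joint spectrum exactly $\{t_1, t_2, t_3\}$, each of multiplicity one. I will establish this either by direct $3 \times 3$ matrix computation, computing $\pi_u^{(1)}(x^{\alpha_1^\vee})$ and $\pi_u^{(1)}(x^{\alpha_2^\vee})$ from the explicit generator matrices in Section~\ref{sect:statement} after using the Bernstein presentation to rewrite $x^{\alpha_j^\vee}$ as a word in the $T_i$'s, or, more cleanly, by realising $\pi_u^{(1)}$ as parabolically induced from a one-dimensional character of a rank-one parabolic subalgebra of $\scH_W$, from which the weights can be read off directly from the inducing character. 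Granted this weight identification, taking traces gives part~(i), $\chi_u^{(1)}(x^\la) = t_1^\la + t_2^\la + t_3^\la$. For part~(ii), the vanishing $\chi_u^{(1)}(x^\la \tau_w) = 0$ when $w \neq 1$, I will appeal to the intertwining relation $\tau_i x^\la = x^{s_i \la} \tau_i$ together with the quadratic identity $\tau_i^2 \in \CC[P]$ from~(\ref{eq:iii}): the former forces $\pi_u^{(1)}(\tau_i)$ to permute weight spaces via the action of $s_i$ (possibly killing some of them), while the latter, combined with the observation that each $t_i$ lies on one of the reducibility walls $\{t^{\alpha^\vee} = q^{-1}\}$, pins down $\pi_u^{(1)}(\tau_i)$ to have zero diagonal in the weight basis; multiplication by the diagonal operator $\pi_u^{(1)}(x^\la)$ preserves this vanishing, and the case $\ell(w) \geq 2$ follows inductively from $\tau_w = \tau_{i_1}\cdots\tau_{i_\ell}$.

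The principal obstacle is the weight identification in the three-dimensional case: the direct matrix approach is feasible but requires careful bookkeeping through the Bernstein presentation, whereas the parabolic-induction approach is cleaner in principle but demands a judicious choice of Levi and inducing character to reproduce exactly the three weights predicted by the residue computation in Lemma~\ref{lem:plancherel}.
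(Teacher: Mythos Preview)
Your proposal is correct and follows essentially the same line as the paper's own proof: both reduce to the spanning set $\{x^\la\tau_w\}$, invoke Corollary~\ref{cor:littlef}(3) on the $f_t$-side, and then verify that the relevant character vanishes on $x^\la\tau_w$ for $w\neq 1$ and picks up the correct weight-sum for $w=1$. The paper opts for your second (parabolic-induction) alternative for $\chi_u^{(1)}$, working directly in the basis $\{1\otimes v_u,\,\tau_2\otimes v_u,\,\tau_{s_1s_2}\otimes v_u\}$ of the induced module $\scH_W\otimes_{\scH_1}\CC v_u$ (Section~\ref{sect:8}), and the single key computation is $\tau_1\otimes v_u=0$; from this the zero-diagonal property of $\pi_u^{(1)}(\tau_w)$ for $w\neq 1$ and the weight identification both follow at once, so your detour through the quadratic relation~(\ref{eq:iii}) and the reducibility walls is not needed---the intertwining relation $\tau_w x^\la=x^{w\la}\tau_w$ alone already forces $\pi_u^{(1)}(\tau_w)$ to send each weight space into a \emph{different} weight space (for generic $u$), giving the vanishing trace directly.
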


\begin{proof} The first statement is similar to Theorem~\ref{thm:characters}. Here is an outline (see the next section). If $u\neq q^{-\frac{1}{2}},q^{\frac{1}{2}}$ then the representation space $V(u)=\scH_W\otimes_{\scH_1}(\CC v_u)$ has basis $\{1\otimes v_u,\tau_2\otimes v_u,\tau_{s_1s_2}\otimes v_u\}$. One computes $\tau_1\otimes v_u=0$, and it easily follows that the diagonal entries of the matrices $\pi_u^{(1)}(\tau_w)$ with $w\neq 1$ are all zero. Therefore $\chi_u^{(1)}(\tau_w)=0$ for all $w\neq 1$. The result easily follows from Corollary~\ref{cor:littlef}.

To see that $f_{(q^{-1},q^{-1})}(h)=\chi^{(2)}(h)$ for all $h\in\scH_W$, note that $x^{\alpha_1^{\vee}}=T_2^{-1}T_0T_2T_1$ and $x^{\alpha_2^{\vee}}=T_1^{-1}T_0T_1T_2$, and so $\chi^{(2)}(x^{\alpha_1^{\vee}})=\chi^{(2)}(x^{\alpha_2^{\vee}})=q^{-1}$. On the other hand, $f_{(q^{-1},q^{-1})}(x^{k\alpha_1^{\vee}+\ell\alpha_2^{\vee}}\tau_w)=\delta_{w,1}q^{-k-\ell}$, and the result follows.
\end{proof}

We can now prove the Plancherel Theorem for $\tilde{A}_2$:

\begin{proof}[Proof of Theorem~\ref{thm:plancherel}]
By (\ref{eq:startingpoint}) and Lemmas~\ref{lem:plancherel} and \ref{lem:characters2} we have
$$
\Tr(h)=\frac{1}{q^3}\int_{\TT^2}\frac{f_t(h)}{|c(t)|^2}\,dt+\frac{(q-1)^2}{q^2(q^2-1)}\int_{\TT}\frac{\chi_u^{(1)}(h)}{|c_1(u)|^2}\,du+\frac{(q-1)^3}{q^3-1}\chi^{(2)}(h).
$$
The result follows from Theorem~\ref{thm:characters} (by symmetrising the first integral).
\end{proof}

\section{Some explicit representations}\label{sect:8}

Let us construct the representations $\pi_t$, $\pi_u^{(1)}$, and $\pi^{(2)}$ of $\scH_W$ that appear in the Plancherel Theorem. A good reference for the representation theory of rank~2 affine Hecke algebras (that is, those of type $\tilde{A}_1\times\tilde{A}_1$, $\tilde{A}_2$, $\tilde{B}_2$, and $\tilde{G}_2$) is \cite{ramrank2}.

\medskip

\noindent\textbf{Principal series representations.} The principal series representation with central character $t\in\Hom(P,\CC^{\times})$ is $(\pi_t,V(t))$, where $V(t)=\scH_W\otimes_{\CC[P]} \CC v_t$. Here $\CC v_t$ is the one dimensional representation of $\CC[P]$ given by $x^{\la}\cdot v_t=t^{\la}v_t$. The representation space $V(t)$ has basis $\{T_w\otimes v_t\mid w\in W_0\}$. 

The following example illustrates how to compute relative to this basis. Write $\fq=q^{\frac{1}{2}}-q^{-\frac{1}{2}}$. The Bernstein relation gives
$
x^{\alpha_1^{\vee}}T_{1}=T_1x^{-\alpha_1^{\vee}}+\fq(1+x^{\alpha_1^{\vee}}),
$
and so $x^{\alpha_1^{\vee}}$ acts on the basis element $T_1\otimes v_t$ by
\begin{align*}
x^{\alpha_1^{\vee}}\cdot(T_1\otimes v_t)&=\big(T_1x^{-\alpha_1^{\vee}}+\fq(1+x^{\alpha_1^{\vee}})\big)\otimes v_t=t^{-\alpha_1^{\vee}}(T_1\otimes v_t)+\fq(1+t^{\alpha_1^{\vee}})(1\otimes v_t).
\end{align*}
Computing the matrices $\pi_t(T_1)$ and $\pi_t(T_2)$, is straight forward, and we recover the matrices from Section~\ref{sect:statement} (remember that $A_w\leftrightarrow q_w^{-1/2}T_w$). One can compute $\pi_t(T_0)$ using $x^{\varphi^{\vee}}=T_0T_1T_2T_1$ where $\varphi=\alpha_1+\alpha_2$, but it is quicker to use
$$
T_0\cdot(T_w\otimes v_t)=\begin{cases}
t^{-w^{-1}\varphi^{\vee}}(T_{s_{\varphi}w}\otimes v_t)&\textrm{if $\varphi\in R(w)$}\\
t^{-w^{-1}\varphi^{\vee}}(T_{s_{\varphi}w}\otimes v_t)+\fq(T_w\otimes v_t)&\textrm{if $\varphi\notin R(w)$}
\end{cases}
$$
which follows from \cite[(3.3.6)]{macblue}. Therefore the matrix $\pi_t(T_0)$ is given by
\begin{align*}
T_0\cdot(1\otimes v_t)&=\fq(1\otimes v_t)+t^{-\varphi^{\vee}}(T_{s_1s_2s_1}\otimes v_t)\\
T_0\cdot(T_1\otimes v_t)&=\fq(T_1\otimes v_t)+t^{-\alpha_2^{\vee}}(T_{s_1s_2}\otimes v_t)\\
T_0\cdot(T_2\otimes v_t)&=\fq(T_2\otimes v_t)+t^{-\alpha_1^{\vee}}(T_{s_2s_1}\otimes v_t)\\
T_0\cdot(T_{s_1s_2}\otimes v_t)&=t^{\alpha_2^{\vee}}(T_1\otimes v_t)\\
T_0\cdot(T_{s_2s_1}\otimes v_t)&=t^{\alpha_1^{\vee}}(T_2\otimes v_t)\\
T_0\cdot(T_{s_1s_2s_1}\otimes v_t)&=t^{\varphi^{\vee}}(1\otimes v_t).
\end{align*}

\medskip

\noindent\textbf{Induced representations.} Let $\scH_1$ be the (infinite dimensional) subalgebra of $\scH_W$ generated by $T_1$ and $\CC[P]$. Let $u\in\CC^{\times}$, and let $\CC v_u$ be a $1$-dimensional representation of $\scH_1$ with 
$$
T_1\cdot v_u=-q^{-\frac{1}{2}}v_u,\quad x^{\alpha_1^{\vee}}\cdot v_u=q^{-1}v_u,\quad\textrm{and}\quad x^{\alpha_2^{\vee}}\cdot v_u=q^{\frac{1}{2}}u\,v_u.
$$
Let $(\pi_u^{(1)},V(u))$ be the induced representation of $\scH_W$ with representation space given by $V(u)=\mathrm{Ind}_{\scH_1}^{\scH_W}(\CC v_u)=\scH_W\otimes_{\scH_1}\CC v_u$. The representation space $V(u)$ has basis $\{1\otimes v_u,T_2\otimes v_u,T_{s_1s_2}\otimes v_u\}$, and straightforward computations give
$$
\begin{aligned}
T_1\cdot(1\otimes v_u)&=-q^{-\frac{1}{2}}\,1\otimes v_u\\
T_1\cdot(T_2\otimes v_u)&=T_{s_1s_2}\otimes v_u\\
T_1\cdot(T_{s_1s_2}\otimes v_u)&=T_2\otimes v_u+\fq\,T_{s_1s_2}\otimes v_u
\end{aligned}\quad
\begin{aligned}
T_2\cdot(1\otimes v_u)&=T_2\otimes v_u\\
T_2\cdot(T_2\otimes v_u)&=1\otimes v_u+\fq\,T_2\otimes v_u\\
T_2\cdot(T_{s_1s_2}\otimes v_u)&=-q^{-\frac{1}{2}}\,T_{s_1s_2}\otimes v_u
\end{aligned}
$$
giving the matrices stated in Section~\ref{sect:statement}. One can compute $\pi^{(1)}_u(T_0)$ using the formula $x^{\varphi^{\vee}}=T_0T_1T_2T_1$, or by using \cite[(3.3.6)]{macblue}. The result is given in Section~\ref{sect:statement}.

\medskip

\noindent\textbf{The $1$-dimensional representation.} The representation $\pi^{(2)}$ with representation space $\CC$ has $\pi^{(2)}(T_i)=-q^{-\frac{1}{2}}$ for $i=0,1$ and $2$, and since $x^{\alpha_1^{\vee}}=T_2^{-1}T_0T_2T_1$ and $x^{\alpha_2^{\vee}}=T_1^{-1}T_0T_1T_2$ we have $\pi^{(2)}(x^{\alpha_1^{\vee}})=\pi^{(2)}(x^{\alpha_2^{\vee}})=q^{-1}$.


\subsection*{Acknowledgment}
The first author thanks Donald Cartwright for helpful discussions on related topics over many years. It is a pleasure to present this paper at a conference in honour of his birthday. Indeed both Donald Cartwright and Jean-Phillipe Anker suggested this problem to us, and we thank them both very warmly. The first author also thanks Arun Ram for teaching him about affine Hecke algebras. We also thank E. Opdam for helpful conversations regarding his work on the Plancherel measure of affine Hecke algebras. Finally, thank you to Wolfgang Woess for organising the workshop \textit{Boundaries} in Graz, Austria, June-July 2009. Part of the research for this paper was undertaken in Graz where the first author was supported under the FWF (Austrian Science Fund) project number P19115-N18.

\end{document}